\title{Monge-Amp\`ere gravitation as a $\Gamma$-limit of good rate functions}
\author{Luigi \textsc{Ambrosio}\footnote{Scuola Normale Superiore, Pisa.
E-mail: \emph{ambrosio@sns.it}} \and Aymeric \textsc{Baradat}\footnote{Max Planck Institute for Mathematics in the Sciences, Leipzig.
E-mail: \emph{baradat@mis.mpg.de}} \and Yann \textsc{Brenier}\footnote{\'Ecole Normale Sup\'erieure, Paris. 
E-mail: \emph{brenier@dma.ens.fr}}}
\date{\today}
\theoremstyle{plain}
\newtheorem{Thm}{Theorem}
\newtheorem{Prop}[Thm]{Proposition}
\newtheorem{Lem}[Thm]{Lemma}
\newtheorem{Ass}[Thm]{Assumptions}
\theoremstyle{definition}
\newtheorem{Def}[Thm]{Definition}
\theoremstyle{remark}
\newtheorem{Rem}[Thm]{Remark}
\newcommand{\R}{\mathbb{R}}
\newcommand{\T}{\mathbb{T}}
\newcommand{\N}{\mathbb{N}}
\newcommand{\eps}{\varepsilon}
\newcommand{\perm}{\mathfrak{S}_N}
\newcommand{\XX}{\mathcal{X}}
\newcommand{\YY}{\mathcal{Y}}
\newcommand{\ZZ}{\mathcal{Z}}
\newcommand{\cg}{\langle}
\newcommand{\cd}{\rangle}
\DeclareMathOperator{\Div}{div}
\DeclareMathOperator{\tr}{tr}
\DeclareMathOperator{\D}{d \!}
\DeclareMathOperator{\DD}{D^2 \!}
\begin{document} 

\maketitle

\begin{abstract}
Monge-Amp\`ere gravitation is a modification of the classical Newtonian gravitation where the linear Poisson equation is replaced by the nonlinear Monge-Amp\`ere equation. This paper is concerned with the rigorous derivation of Monge-Amp\`ere gravitation for a finite number of particles from the stochastic model of a Brownian point cloud, in the spirit of the formal paper~\cite{brenier2015double}. The main step in this derivation is the $\Gamma-$convergence of the good rate functions corresponding to a one-parameter family of large deviation principles. Surprisingly, the derived model includes dissipative phenomena. As an illustration, we show that it leads to sticky collisions in one space dimension.
\end{abstract}

\setcounter{tocdepth}{1}
\tableofcontents

\section{Introduction}
On a periodic domain such as $\mathbb{T}^d=\mathbb{(R/Z)}^d$,
Newtonian gravitation is commonly
described  in terms of the density of probability $f(t,x,\xi)$ to find gravitating matter at time $t$, position 
$x\in \mathbb{T}^d$ and velocity $\xi\in\mathbb{R}^d$, subject to the Vlasov-Poisson equation
\begin{gather*}
\partial_t f(t,x,\xi)+\Div_x(\xi f(t,x,\xi))-\Div_{\xi}(\nabla \varphi(t,x)f(t,x,\xi))=0,\\
\Delta \varphi(t,x)=\int_{\mathbb{R}^d} f(t,x,\xi) \D \xi-1, \quad (t,x,\xi)\in\mathbb{R}\times \mathbb{T}^d\times\mathbb{R}^d,
\end{gather*}
where $\varphi$ is the gravitational potential. Notice that the averaged density, say $1$, has been subtracted out from the right-hand side of the Poisson equation, due to the periodicity of the spatial domain. This is a common feature of computational cosmology and it let the uniform density be a stationary solution. The Vlasov-Poisson system can be seen as an "approximation" to the more nonlinear 
Vlasov-Monge-Amp\`ere (VMA) system
\begin{gather}
\label{vlasov} \partial_t f(t,x,\xi)+\Div_x(\xi f(t,x,\xi))-\Div_{\xi}(\nabla\varphi(t,x)f(t,x,\xi))=0,\\
\label{monge-ampère} \det(\mathbb{I}+\DD\varphi(t,x))=\int_{\mathbb{R}^d} f(t,x,\xi) \D\xi
,\quad
(t,x,\xi)\in\mathbb{R}\times \mathbb{T}^d\times\mathbb{R}^d,
\end{gather}
where the fully nonlinear Monge-Amp\`ere equation substitutes for the linear Poisson equation of
Newtonian gravitation. Indeed, for "weak" gravitational potential, by expanding the determinant
about the identity matrix $\mathbb{I}$, we
get 
\[
\det(\mathbb{I}+\DD\varphi(t,x))\sim 1+ \tr(\DD\varphi(t,x))
=1+\Delta\varphi(t,x)
\]
and recover the Newtonian model approximately (and exactly as $d=1$).
In this paper, we will speak of "Monge-Amp\`ere gravitation" ("MAG" in short). The Vlasov-Monge-Amp\`ere system has been introduced and related to the Vlasov-Poisson system in~\cite{brenier2004geometric}, and studied as an ODE on the Wasserstein space in \cite{ambrosio2008hamiltonian}. 
It can also be solved numerically thanks to efficient Monge-Amp\`ere solvers recently designed by M\'erigot \cite{merigot2011multiscale}. It has been argued in \cite{brenier2011modified} that the MAG may also be seen as an approximation of Newtonian gravitation for which the "Zeldovich approximation"~\cite{zel1970gravitational} (see \cite{frisch2002reconstruction,brenier2003reconstruction}), popular in computational cosmology, becomes exact.

In this paper we will not be directly interested in this system, but rather in its discrete version, \textit{i.e.} when the number of particles is finite. As well known in optimal transport theory~\cite{brenier1987decomposition,brenier1991polar,villani2003topics}, the Monge-Amp\`ere equation~\eqref{monge-ampère} is solved by the unique function $\varphi$ such that the map $\mathrm{Id} + \nabla \varphi$ realizes the optimal transport with quadratic cost from the density $\int f \D \xi$ to the Lebesgue measure. Then, the kinetic equation~\eqref{vlasov} is known to be the continuous version of Newton equations of classical mechanics in a potential given by $\varphi$.

In the discrete setting, the stationary Lebesgue measure is replaced by a family $(a_1, \dots, a_N) \in (\R^{d})^N$ of $N \geq 1$ points in $\R^d$ (here we make the presentation in $\R^d$ instead of $\T^d$ for the sake of simplicity). One can for instance think of a regular lattice approximating in some region a constant density, even though in the sequel the particular choice of $(a_1,\dots, a_n)$ will play no role. We will consider the evolution of a cloud of $N$ particles $(x_1,\dots, x_n)$ in $\R^d$ whose dynamic is ruled by the discrete optimal transport problem:
\begin{equation}
\label{eq:discrete_OT}
\sigma_{\mathrm{opt}} = \sigma_{\mathrm{opt}}(X) := \underset{\sigma \in \mathfrak{S}_n}{\mathrm{Arginf}}\sum_{i=1}^N |x_i - a_{\sigma(i)}|^2, \quad X=(x_1, \dots, x_N) \in (\R^d)^N.
\end{equation}
More precisely, the analogous of \eqref{vlasov}\eqref{monge-ampère} in this framework is easily seen to be formally:\footnote{\label{footnote:ill-posed} Due to the lack of uniqueness in the discrete optimal transport problem, this system is not always well defined \emph{a priori} but we don't want to enter into the details here.}
\begin{equation}
\label{eq:formal_EDO}
\forall i = 1, \dots, N,\quad \frac{\D\,^2}{\D t^2} x_i(t) = x_i(t) - a_{\sigma_{\mathrm{opt}}(i)}.
\end{equation}

Following the idea of the recent paper \cite{brenier2015double}, we will derive this discrete dynamic from the very elementary stochastic model of a Brownian point cloud. However, in \cite{brenier2015double}, the derivation was obtained through a double application of the
large deviation principle (LDP), through a purely formal use of the Freidlin-Wenzell theory~\cite{freidlin1998random}. The main purpose of the present paper is to explain how such a derivation can be made rigorous by substituting for one of the applications of the LDP a PDE method inspired by the famous concept of "onde pilote" introduced by Louis de Broglie at the early stage of Quantum Mechanics~\cite{deBroglie1927mecanique}. 

The outline of the paper is the following. In Section \ref{sec:derivation} we show how to derive MAG starting from a finite number of Brownian particles. This will be done in several steps and we do not want to enter into the details now, but a key argument will be the $\Gamma-$convergence of the good rate functions associated with a family of SDEs towards an "effective" functional related to MAG. This is stated in Theorem \ref{mainresult}, which is our main result. Section \ref{sec:proof} is dedicated to the proof of Theorem \ref{mainresult}. The effective functional that we obtain does not lead \emph{exactly} to MAG as stated in~\eqref{eq:formal_EDO} (which as already saw in footnote \ref{footnote:ill-posed} is not well-posed in general), but also includes dissipative phenomena in those points $X$ where the solution of the discrete optimal transport problem~\eqref{eq:discrete_OT} is not unique. Even if we do not know for the moment how to treat these dissipative effects in general, the purpose of Section~\ref{sec:sticky} is to show that in $1$ space dimension, they lead to sticky collisions.

\underline{Notations}. We will work with $N$ particles in $\R^d$, and hence in $(\R^d)^N$. Points of $(\R^d)^N$ will be denoted with capital letters, mainly $X$, $Y$ or $Z$. Curves with values in $(\R^d)^N$ will be denoted with calligraphic letters $\XX$, $\YY$ or $\ZZ$. The position of $\XX$, $\YY$ and $\ZZ$ at time $t \in \R$ will be denoted by $X_t$, $Y_t$ and $Z_t$ respectively.

\section{Derivation of the discrete model}
\label{sec:derivation}
\subsection{The stochastic model of a lattice with Brownian agitation}
Take $A = (a_1, \dots, a_N) \in (\R^{d})^N$ a family of $N \geq 1$ points in $\R^d$. We assume each point of this lattice to be subject to Brownian agitation for times $t\geq 0$. At time $t$, the position of point $i$ is
\[
a_i + \sqrt{\eps} B_t^i,
\]
where $(B^i)_{i = 1, \dots, d}$ is a family of $N$ independent normalized Brownian curves and $\eps$ monitors the (common) level of noise. As a consequence, at time $t>0$, the density of probability $\rho_{\eps}(t,X)$ for the point cloud 
\[
(a_1 + \sqrt{\eps} B_t^1, \dots, a_N + \sqrt{\eps} B_t^N)
\]
to be observed at location $X = (x_1, \dots, x_d) \in (\R^d)^N$, up to a permutation $\sigma\in\mathfrak{S}_N$ of the labels, is easy to compute. We find
\[
\rho_{\eps}(t,X)=\frac{1}{N!\sqrt{2\pi\eps t}^{dN}} \sum_{\sigma\in\mathfrak{S}_N} \prod_{\alpha=1}^N \exp\left(-\frac{|x_i-a_{\sigma(i)}|^2}{2\eps t}\right),
\]
or, in short,
\[
\frac{1}{N!\sqrt{2\pi\eps t}^{Nd}} \sum_{\sigma\in\mathfrak{S}_N} \exp\left(-\frac{|X-A^\sigma|^2}{2\eps t}\right),
\]
where $|\cdot|$ denotes the euclidean norm in $\mathbb{R}^d$ or $(\R^d)^N$ depending on the context, and where for all $X = (x_1, \dots, x_N) \in (\R^d)^N$, $X^\sigma$ stands for:
\[
X^\sigma=(x_{\sigma(1)}, \dots, x_{\sigma(N)}).
\]
This was the starting point of the discussion made in \cite{brenier2015double}, using a double large deviation principle.

In the present paper, we rather turn to a PDE viewpoint, where $\rho_{\eps}$ is the solution of the heat equation in $(\R^d)^N$
\begin{equation}
\label{eq:heat_equation}
\frac{\partial\rho_{\eps}}{\partial t}(t,X)=\frac{\eps}{2}\Delta \rho_{\eps}(t,X)
\end{equation}
with, as initial condition, the delta measure located at
$A=(a_1, \dots, a_N) \in (\R^d)^N$ and symmetrized with respect to $\sigma\in\mathfrak{S}_N$, namely:
\begin{equation}
\label{eq:sym_initial_value}
\rho_{\eps}(0,X)=\frac{1}{N!}\sum_{\sigma\in\mathfrak{S}_N} \delta_{A^\sigma}.
\end{equation}
In some sense, we have solved the heat equation in the space of "point clouds" 
$(\mathbb{R}^{d})^N/\mathfrak{S}_N$, with initial position $A$, defined up to a permutation $\sigma\in\mathfrak{S}_N$ of the labels $i=1,\dots,N$.

\subsection{"Surfing" the "heat wave"}

After solving the heat equation in the space of "clouds" $(\mathbb{R}^{d})^N/\mathfrak{S}_N$ \eqref{eq:heat_equation}\eqref{eq:sym_initial_value}, we introduce the companion ODE in the space $(\R^d)^N$:
\begin{gather*}
\frac{\D X^\eps_t}{\D t}=v_\eps(t,X^\eps_t), \\
v_\eps(t,X)=-\frac{\eps}{2}\nabla\log\rho_{\eps}(t,X),
\end{gather*}
or, more explicitly
\begin{equation*}
v_\eps(t,X)=\frac{1}{2t}\frac{\displaystyle{\sum_{\sigma\in\mathcal{S}_N}(X-A^\sigma)\exp\left(-\frac{|X-A^\sigma|^2}{2\eps t}\right)}}{\displaystyle{\sum_{\sigma\in\mathcal{S}_N}\exp\left(-\frac{|X-A^\sigma|^2}{2\eps t}\right)}}=\frac{1}{2t}\left( X-\frac{\displaystyle{\sum_{\sigma\in\mathcal{S}_N}A^\sigma\exp\left(\frac{ X \cdot A^\sigma }{\eps t}\right)}}{\displaystyle{\sum_{\sigma\in\mathcal{S}_N}\exp\left(\frac{ X \cdot A^\sigma }{\eps t}\right)}}\right),
\end{equation*}
where if $U$ and $V$ are in $(\R^d)^N$, $U \cdot V$ denotes the inner product between $U$ and $V$. This velocity is chosen so that
\[
\frac{\partial \rho_{\eps}}{\partial t}(t,X) + \Div(\rho_{\eps}(t,X) v_{\eps}(t,X) ) = 0,
\]
\textit{i.e.} for the density $\rho_{\eps}$ to be transported by the velocity field $v_{\eps}$. We may solve this ODE for arbitrarily chosen position $X_{t_0}\in (\R^d)^N$ and initial time $t_0>0$.
In other words, we let the set of $N$ "particles" $X_t=(x_1(t), \dots x_N(t)) \in (\R^d)^N$  "surf" the "heat wave" generated by the lattice subject to Brownian agitation! By doing that, we just mimic the idea of quantum particles driven by the "onde pilote", as imagined by Louis de Broglie~\cite{deBroglie1927mecanique}
at the early stage of Quantum Mechanics.

\begin{Rem}
In that case, we would use the same ODE with $v=\eps\nabla\mathcal{I}m\log\psi$, 
$\psi$ solving the Schr\" odinger equation. For instance, we could consider
the free Schr\"odinger equation instead of the heat equation:
\begin{gather*}
\left(i\partial_t+\frac{\eps}{2}\Delta\right)\psi=0, \\
\psi(0,X)=\sum_{\sigma \in \mathfrak{S}_N} \exp\left(-\frac{|X-A^\sigma|^2}{a^2}\right),
\end{gather*}
with initial condition chosen according to "bosonic statistics".
However, in the quantum case, the analysis gets substantially more difficult, due to the possible
vanishing of the wave function $\psi$ during the evolution.
\end{Rem}

\subsection{Large deviations of the "heat wave" ODE}

Let us go back to the "heat wave" ODE and add a noise of the following type:
\begin{equation}
\label{eq:perturbated_companion}
\D X^{\eps,\eta}_t = v_{\eps}(t,X^{\eps, \eta}_t)\D t + \sqrt{\eta} \alpha(t) \D B_t,
\end{equation}
where $\eta$ is a positive number and $\alpha$ is a smooth function from $\R_+^*$ to $\R_+^*$. In other words, our "surfers" are now subject to some additional agitation, while surfing on the heat wave generated by the lattice already under Brownian agitation! 

We will see that when $\eta$ and $\eps$ are small, then the trajectories charged by the solution of this SDE that are in $P \in (\R^d)^N$ at time $t_0>0$ and in $Q \in (\R^d)^N$ at time $t_1>t_0$ (up to ordering) are very close to the dynamic of MAG.
Notice that the level of noise depends on time through the function $\alpha(t)$. It will be crucial in Subsection~\ref{subsec:change_of_time} since we will only recover MAG after suitable change of time.

Since, for fixed $\eps>0$ and $t>0$, $v_{\eps}$ is a smooth velocity field, existence of a strong solution and pathwise uniqueness for \eqref{eq:perturbated_companion} is standard once fixed a law for the initial position $X_{t_0}^{\eps, \eta}$, $t_0>0$. Furthermore, we may pass to the limit $\eta\rightarrow 0$, while $\eps>0$ is kept fixed, in the sense of large deviation: A direct application of classical Freidlin-Wentzell theory \cite{freidlin1998random,dembo2009large} leads to:
\begin{Thm}
\label{thm:LDP}
Let us fix $P,Q \in (\R^d)^N$ the endpoints of our trajectories, up to ordering, and $0<t_0<t_1$ two positive times. For fixed $\eps$ and as $\eta \downarrow 0$, the law of the solution of~\eqref{eq:perturbated_companion} between times $t_0$ and $t_1$ starting from $P$ and conditioned to arrive in $Q$ (up to ordering) satisfies the large deviation principle on $C^0([t_0,t_1]; (\R^d)^N)$ of good rate function $L_\eps$ defined for all $\XX = (X_t)_{t\in [t_0,t_1]}$ by:
\begin{equation*}
L_\eps(\XX) = \left\{
\begin{aligned}
&\int_{t_0}^{t_1} \frac{|\dot X_t - v_\eps(t, X_t)|^2}{\alpha(t)^2} \D t, && \mbox{if }\XX \in H^1([t_0,t_1]; (\R^d)^N),\\[-10pt]
&&& X_{t_0}\in\{P^\sigma\} \mbox{ and } X_{t_1} \in \{ Q^\sigma \},\\[5pt]
&+\infty, && \mbox{else},
\end{aligned}
\right.
\end{equation*}
where here and in the rest of the article, we denote by $\{ P^\sigma \}$ and $\{ Q^\sigma \}$ the sets $\{ P^\sigma, \, \sigma \in \perm \}$ and $\{ Q^\sigma, \, \sigma \in \perm \}$ respectively.
\end{Thm}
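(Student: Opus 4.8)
The plan is to invoke the Freidlin--Wentzell theory in the standard form suited to the conditioned process, treating the SDE \eqref{eq:perturbated_companion} as a small-noise perturbation of an ODE with a smooth (time-dependent) drift on the bounded time interval $[t_0,t_1]$. First I would fix $\eps>0$ once and for all, so that for $t$ ranging in the compact interval $[t_0,t_1]$ the velocity field $(t,X)\mapsto v_\eps(t,X)$ is smooth with all derivatives bounded on any bounded region of $(\R^d)^N$; since $v_\eps(t,X)=\tfrac{1}{2t}\bigl(X-m_\eps(t,X)\bigr)$ where $m_\eps$ is a convex combination of the finitely many points $A^\sigma$, one has the crude bound $|v_\eps(t,X)|\le C(1+|X|)$ uniformly on $[t_0,t_1]$, which gives non-explosion and the needed global Lipschitz-type control after a standard localization argument. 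The diffusion coefficient $\sqrt\eta\,\alpha(t)\,\Id$ is (a constant times) the identity, with $\alpha$ smooth and strictly positive on $[t_0,t_1]$, hence bounded above and below; this makes the diffusion uniformly elliptic and the associated controlled action integral the clean quadratic expression appearing in $L_\eps$.

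Next I would recall the Freidlin--Wentzell / Schilder-type statement (e.g. \cite{freidlin1998random,dembo2009large}): for the (unconditioned) law $\Prob^{\eps,\eta}_{t_0,P}$ of the solution on $C^0([t_0,t_1];(\R^d)^N)$ started deterministically at $X_{t_0}=P$, the family satisfies, as $\eta\downarrow 0$, an LDP with good rate function
\begin{equation*}
I_\eps(\XX)=
\begin{cases}
\displaystyle\int_{t_0}^{t_1}\frac{|\dot X_t-v_\eps(t,X_t)|^2}{\alpha(t)^2}\,\D t, & \XX\in H^1([t_0,t_1];(\R^d)^N),\ X_{t_0}=P,\\[4pt]
+\infty, & \text{otherwise.}
\end{cases}
\end{equation*}
Here I would be slightly careful that the classical references are usually stated for autonomous coefficients or for $\sqrt\eta$ multiplying a $\sigma(t,X)$ with enough regularity; the time-dependence through $\alpha$ and $v_\eps$ is harmless and can either be absorbed by adding time as an extra (noiseless) coordinate $\tfrac{\D}{\D t}T_t=1$ and applying the autonomous theorem, or simply by citing the non-autonomous version in \cite{dembo2009large}. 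The factor $\alpha(t)^{-2}$ and the absence of a $\tfrac12$ are just the Legendre transform of the quadratic form $z\mapsto \tfrac14\alpha(t)^2|z|^2$ coming from the generator $\tfrac{\eta}{2}\alpha(t)^2\Delta$; I would simply match normalizations so that the rate function is exactly the one in the statement.

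The only genuinely new point is the conditioning on the endpoint, and here the plan is to incorporate the two-sided constraint $X_{t_0}\in\{P^\sigma\}$, $X_{t_1}\in\{Q^\sigma\}$. Since we start from a single representative $P$ (or, if one prefers the symmetrized initial law $\tfrac1{N!}\sum_\sigma\delta_{P^\sigma}$, from a finite mixture, which only adds the trivial $\inf_\sigma$ over $N!$ good rate functions and hence stays a good rate function), the initial constraint $X_{t_0}\in\{P^\sigma\}$ is automatic. For the terminal constraint I would condition on the event $\{X_{t_1}\in\{Q^\sigma\}\}$, a finite union of points, and use the standard fact that conditioning an LDP on a closed set $F$ with $\inf_F I_\eps<\infty$ yields an LDP on $F$ with rate function $I_\eps-\inf_F I_\eps$; equivalently, and more robustly given that $\{Q^\sigma\}$ has measure zero, I would first localize to a small ball $B_\delta(\{Q^\sigma\})$, apply the conditional LDP there, and then let $\delta\downarrow0$ using the continuity of $X\mapsto X_{t_1}$ and the goodness (compact sublevels) of $I_\eps$, an argument for which positivity of $\alpha$ on the closed interval is exactly what guarantees the minimizing bridges stay in the relevant region and the normalizing constant $\inf I_\eps$ is finite and achieved. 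The resulting rate function is $L_\eps=I_\eps-\inf_{\{X_{t_0}\in\{P^\sigma\},X_{t_1}\in\{Q^\sigma\}\}}I_\eps$; since the problem only asserts the LDP \emph{with good rate function $L_\eps$} in the displayed form, and good rate functions are defined up to this additive normalization (or one simply checks the stated $L_\eps$ has compact sublevels, which follows from boundedness of $v_\eps$ and coercivity of the $H^1$-seminorm together with the Arzel\`a--Ascoli / weak-$H^1$ compactness), I would conclude by identifying the two.

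I expect the main obstacle to be the careful handling of the endpoint conditioning against a null set together with the unboundedness of $v_\eps$ on all of $(\R^d)^N$: one must make sure that the exponential tightness needed for the full LDP (not just the weak LDP) survives the linear growth of the drift and the conditioning, which I would address by a truncation of $v_\eps$ outside a large ball combined with an a priori Gaussian-type bound showing that conditioned bridges between fixed endpoints do not make large excursions with non-negligible probability. Everything else — the form of the rate function, the factor $\alpha(t)^{-2}$, the passage from the symmetrized initial datum to a finite infimum — is routine bookkeeping on top of the cited Freidlin--Wentzell results.
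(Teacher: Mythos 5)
The paper offers no proof of this theorem beyond asserting it as a direct application of classical Freidlin--Wentzell theory (citing \cite{freidlin1998random,dembo2009large}), and your proposal is exactly that application spelled out: the unconditioned small-noise LDP for a smooth drift with nondegenerate, time-dependent diffusion coefficient on $[t_0,t_1]$, followed by the transfer to the endpoint-conditioned process and the normalization bookkeeping, so you are taking essentially the same route. If anything you are more careful than the paper, since you explicitly flag the two points it glosses over --- that conditioning on the null event $X_{t_1}\in\{Q^\sigma\}$ genuinely requires bridge-type estimates (truncation plus transition-density or Gaussian-bridge bounds to justify the $\delta\downarrow 0$ limit, not merely the conditioning-on-closed-sets lemma), and that the conditioned family's rate function is the displayed $L_\eps$ only after subtracting its infimum, an additive constant that is irrelevant for the $\Gamma$-convergence and the identification of minimizers used later in the paper.
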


In the rest of the article, we will call $L_\epsilon$ the Freidlin-Wentzell action instead of the usual terminology "good rate function". Also, the endpoints $P$ and $Q$ are fixed once for all so we do not write explicitly the dependence of $L_\eps$ on those.

Theorem \ref{thm:LDP} asserts in particular that when $\eps$ is fixed and $\eta$ is small, if $X^{\eps, \eta}$ solves \eqref{eq:perturbated_companion}, given $X_{t_0}^{\eps, \eta} \in \{P^\sigma\}$ and $X^{\eps, \eta}_{t_1}\in\{Q^\sigma\}$, $X^{\eps,\eta}$ is with very high probability close to the minimizers of $L_\eps$. Now we will see that these functionals converge as $\eps \downarrow 0$ to a functional whose minimizers follow the dynamic of MAG\footnote{Once again up to a suitable change of time, see Subsection \ref{subsec:change_of_time}.}, in the sense of $\Gamma-$convergence (and hence in the sense of convergence of minimizers as well). As a consequence, when both $\eta$ and $\eps$ are small, given $X_{t_0}^{\eps, \eta} \in \{P^\sigma\}$ and $X^{\eps, \eta}_{t_1}\in \{Q^\sigma\}$, $X^{\eps,\eta}$ is close with high probability to the dynamic of MAG.

\begin{Rem}
Here, we chose to present the result for point clouds, \emph{i.e.} when the particle are still indistinguishable. However, the theorem could also be stated replacing the conditioning on $X_{t_0}\in\{P^\sigma\}$ and $X_{t_1} \in \{Q^\sigma \}$ by $X_{t_0}=P$ and $X_{t_1} = Q$. Otherwise stated, reintroducing distinguishable particles at this stage would not affect the results of this section (neither Theorem~\ref{thm:LDP} nor Theorem~\ref{mainresult} below). We decided to keep on working on clouds in order to avoid crossings of particles in Section~\ref{sec:sticky}. 
\end{Rem}

\subsection{The convergence result}
\label{subsec:main_result}

Define the following smooth convex function (see Lemma~\ref{lem:estim_g_eps} below):
\begin{equation}
\label{eq:def_f_eps}
\forall \eps > 0, \: \forall t > 0, \: \forall X \in (\R^d)^N,\qquad f_{\eps}(t,X) := \eps t \log \left[ \frac{1}{N!} \sum_{\sigma \in \perm} \exp \left(\frac{ X \cdot A^\sigma}{t \eps} \right)\right].\phantom{\forall \eps > 0, \: \forall t > 0, \: \forall X \in }
\end{equation}
It has the property that for all $\eps > 0$, $t > 0$, and $X \in (\R^d)^N$,
\[
v_{\eps}(t,X) = \frac{X - \nabla f_{\eps}(t,X)}{2t}.
\]
As a consequence, denoting by $\beta$ the smooth function $1/\alpha^2$, we can rewrite $L_{\eps}$ for all $\eps >0$ as:
\begin{equation*}
L_\eps(\XX) = \left\{
\begin{aligned}
&\int_{t_0}^{t_1} \left| \dot{X}_t - \frac{X_t - \nabla f_{\eps}(t, X_t)}{2t} \right|^2 \beta(t) \D t , && \mbox{if }\XX \in H^1([t_0,t_1]; (\R^d)^N),\\[-10pt]
&&& X_{t_0}\in\{P^\sigma\} \mbox{ and } X_{t_1} \in \{Q^\sigma\},\\[5pt]
&+\infty, && \mbox{else.}
\end{aligned}
\right.
\end{equation*}

When $\eps$ tends to zero, by virtue of the so-called Laplace's principle, we have the pointwise convergence:
\begin{equation}
\label{eq:deff}
\lim_{\eps \to 0 } f_{\eps}(t,X) = \max_{ \sigma \in \perm} X \cdot A^\sigma =: f(X).
\end{equation}
The function $f$ no longer depends on the time variable, and it is a convex function with finite values. As a consequence, for each $X \in (\R^d)^N$, the subdifferential $\partial f (X)$ of $f$ at $X$ is non-empty. We will consider the extended gradient $\overline\nabla f(X)$ of $f$ at $X$ defined as:
\begin{Def}[Extended gradient]
\label{def:extended_gradient}
We call \emph{extended gradient} of a real valued convex function $h$ at $X$, denoted by $\overline{\nabla} h (X)$, the element of $\partial h (X)$ with minimal Euclidean norm. 
\end{Def}
Here is our $\Gamma$-convergence result: 
\begin{Thm}
\label{mainresult}
As $\eps$ tends to $0$, the family of actions $(L_{\eps})_{\eps >0}$ $\Gamma-$converges to
\begin{equation*}
L(\XX) = \left\{
\begin{aligned}
&\int_{t_0}^{t_1} \left| \dot{X}_t - \frac{X_t - \overline{\nabla} f(X_t)}{2t} \right|^2 \beta(t) \D t , && \mbox{if }\XX \in H^1([t_0,t_1]; (\R^d)^N),\\[-10pt]
&&& X_{t_0}\in\{P^\sigma\} \mbox{ and } X_{t_1} \in \{Q^\sigma\},\\[5pt]
&+\infty, && \mbox{else.}
\end{aligned}
\right.
\end{equation*}
for the topology of uniform convergence of $C^0([t_0,t_1]; (\R^d)^N)$.
\end{Thm}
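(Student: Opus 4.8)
The plan is to prove the two standard inequalities for $\Gamma$-convergence: the liminf (lower bound) inequality and the limsup (recovery sequence) inequality, for the uniform topology on $C^0([t_0,t_1];(\R^d)^N)$. The single unifying fact that drives both directions is a good quantitative description of $\nabla f_\eps$ as $\eps \downarrow 0$: namely that $f_\eps(t,\cdot)$ is smooth and convex (Lemma~\ref{lem:estim_g_eps}), that $f_\eps(t,\cdot) \to f$ locally uniformly, and — since $f_\eps$ and $f$ are convex — that this forces a form of convergence of gradients. The subtle point, and the reason the extended gradient $\overline\nabla f$ appears rather than an arbitrary element of $\partial f$, is that $\nabla f_\eps(t,X)$ is the gradient of a \emph{specific} smooth mollification (a log-sum-exp at scale $t\eps$), and one should check that as $\eps \downarrow 0$ this particular sequence selects the minimal-norm subgradient. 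Concretely, writing $f_\eps(t,X) = t\eps \log\bigl[\tfrac1{N!}\sum_\sigma e^{X\cdot A^\sigma/(t\eps)}\bigr]$, one has $\nabla f_\eps(t,X) = \sum_\sigma p_\sigma^\eps A^\sigma$ where $p^\eps$ is the Gibbs weight on the set $\{\sigma : X\cdot A^\sigma \text{ maximal}\}$ in the limit; as $\eps\downarrow 0$ the weights concentrate uniformly on the argmax set, so the limit of $\nabla f_\eps(t,X)$ is the barycenter of $\{A^\sigma : \sigma \in \mathrm{argmax}\}$ with uniform weights, which is exactly the projection of the origin onto (the translate by any fixed element of) $\partial f(X) = \mathrm{conv}\{A^\sigma : \sigma \in \mathrm{argmax}\}$ — i.e.\ $\overline\nabla f(X)$. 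I expect this ``identification of the selected subgradient'' step, together with getting it \emph{uniformly} on compact sets in $X$ and locally uniformly in $t\in[t_0,t_1]$ (with care near the faces where the argmax set changes), to be the main technical obstacle; it is essentially a Laplace/Varadhan-type estimate, but one needs uniformity and a modulus, not just pointwise convergence.

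For the \textbf{liminf inequality}, suppose $\XX^\eps \to \XX$ uniformly with $\liminf_\eps L_\eps(\XX^\eps) < \infty$. Passing to a subsequence we may assume $\sup_\eps L_\eps(\XX^\eps) =: C < \infty$. Since $f_\eps(t,\cdot)$ is convex and $f_\eps \to f$ locally uniformly, $\nabla f_\eps$ is locally bounded uniformly in $\eps$ (convex functions converging locally uniformly have locally equibounded subgradients); hence from $L_\eps(\XX^\eps)\le C$ and boundedness of $X^\eps_t$ we get a uniform $L^2([t_0,t_1])$ bound on $\dot X^\eps_t$, so $\XX^\eps \rightharpoonup \XX$ weakly in $H^1$, in particular $\XX\in H^1$ and the endpoint constraints pass to the limit since evaluation at $t_0,t_1$ is continuous under uniform convergence. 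Now write the integrand as $\beta(t)\,|\dot X^\eps_t - v_\eps(t,X^\eps_t)|^2$ and compare $v_\eps(t,X^\eps_t) = \tfrac{X^\eps_t - \nabla f_\eps(t,X^\eps_t)}{2t}$ with $v(t,X_t):=\tfrac{X_t - \overline\nabla f(X_t)}{2t}$. Using uniform convergence $X^\eps_t\to X_t$ and the gradient-selection fact above, $v_\eps(t,X^\eps_t) \to v(t,X_t)$ for a.e.\ $t$ (at all $t$ where $X_t$ avoids the measure-zero bad set and with dominated convergence in $t$ via the local bound); then by weak lower semicontinuity of $\YY \mapsto \int \beta(t)|\dot Y_t - g_t|^2\D t$ along $\dot X^\eps \rightharpoonup \dot X$ with $g^\eps_t := v_\eps(t,X^\eps_t) \to g_t := v(t,X_t)$ strongly in $L^2$, we conclude $\liminf_\eps L_\eps(\XX^\eps) \ge L(\XX)$. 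The one nuisance: $\overline\nabla f$ need not be continuous, only of bounded variation / measurable, so ``$X_t$ avoids the bad set for a.e.\ $t$'' has to be argued — it holds because a nonconstant-on-no-interval absolutely continuous curve spends zero time on the Lebesgue-null union of faces, and if $\XX$ is constant on an interval the claim is trivial there; some care is needed but it is standard.

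For the \textbf{limsup inequality}, given $\XX$ with $L(\XX) < \infty$ (else nothing to prove), take the constant recovery sequence $\XX^\eps := \XX$. Then $L_\eps(\XX) = \int_{t_0}^{t_1} \beta(t)\,|\dot X_t - v_\eps(t,X_t)|^2 \D t$, and it suffices to show $v_\eps(t,X_t) \to v(t,X_t)$ in $L^2([t_0,t_1])$ (with a uniform integrable majorant so dominated convergence applies), after which $L_\eps(\XX)\to L(\XX)$; this again follows from the gradient-selection fact applied along the fixed compact curve $t\mapsto X_t$, plus the local equiboundedness of $\nabla f_\eps$ to dominate. Assembling the two inequalities gives the $\Gamma$-limit and, since the $L_\eps$ are clearly coercive in the uniform topology (the $H^1$ bound above plus Arzelà--Ascoli give equicoercivity on sublevel sets, so any $\Gamma$-converging sequence of minimizers is precompact), also convergence of minimizers, which is the statement needed for the derivation in Section~\ref{sec:derivation}.
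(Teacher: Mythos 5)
There is a genuine gap, and it sits exactly where you flagged ``the main technical obstacle'': the claimed gradient-selection fact is false, and the way you dispose of the non-differentiability set is incompatible with the phenomenon the theorem is designed to capture. First, the pointwise limit of $\nabla f_\eps(t,X)=\sum_\sigma p^\eps_\sigma A^\sigma$ is indeed the \emph{uniform barycenter} of $\{A^\sigma:\sigma\in\mathrm{argmax}\}$, but the barycenter of a finite family of points (all lying on the sphere $|A^\sigma|=|A|$) is \emph{not} in general the projection of the origin onto their convex hull, i.e.\ not $\overline\nabla f(X)$. The two coincide when the argmax set is a single orbit of the stabilizer of $X$ (which is why the identity holds in $d=1$ with strictly ordered $A$, cf.\ Lemma~\ref{lem:prop_nablaf}), but for $d\ge 2$ the set of optimal assignments at a degenerate $X$ need not have this coset structure, and a barycenter of three or more points on a sphere generically differs from the minimal-norm point of their hull. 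So the identification $\lim_\eps\nabla f_\eps(t,X)=\overline\nabla f(X)$ cannot be the engine of the proof. Second, even granting convergence off the non-differentiability set $\mathcal N$, your reduction ``a nonconstant AC curve spends zero time on the null set $\mathcal N$, and constancy is trivial'' fails: $\mathcal N$ is a union of affine subspaces, and the curves that matter most here --- the minimizers of $L$, which exhibit the sticky collisions of Sections~\ref{subsec:LAP} and~\ref{sec:sticky} --- move \emph{inside} such a subspace for a whole interval of times. On such an interval the constant recovery sequence gives $\int\beta|\dot X_t-\frac{X_t-\ell(X_t)}{2t}|^2$ with $\ell$ the barycenter rather than $\overline\nabla f$, so $L_\eps(\XX)\not\to L(\XX)$; and in the liminf the claimed strong $L^2$ convergence $v_\eps(t,X^\eps_t)\to v(t,X_t)$ breaks down because $\overline\nabla f$ is discontinuous at every point of $\mathcal N$ and the diagonal limit along $X^\eps_t\to X_t\in\mathcal N$ is not controlled.

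The paper's proof is built precisely to avoid both issues. After the change of variables it expands $|\dot Y+\nabla g_\eps|^2$ and treats the two pieces by different mechanisms: the cross term $\int\dot Y^\eps\cdot\nabla g_\eps(\theta,Y^\eps)\eta$ is integrated by parts, so its limit only requires convergence of $g_\eps$ itself together with the chain-rule identity~\eqref{eq:IPP_g} for $\theta\mapsto g(\theta,Y_\theta)$ along $H^1$ curves (this is how $\overline\nabla g$ enters, via a one-dimensional derivative along the curve, not via convergence of gradients); the quadratic term $\frac12\int\{|\dot Y|^2+|\nabla g_\eps|^2\}\eta$ is handled for the liminf by the \emph{one-sided} lower semicontinuity of the minimal slope under pointwise convergence of convex functions (Proposition~\ref{prop:lsc_slope}), and for the limsup by a genuinely non-constant recovery sequence built from the Moreau--Yosida resolvent $Y^{\tau,\eps}_\theta=J_{\tau,g_\eps(\theta,\cdot)}(Y_\theta)$, which forces $|\nabla g_\eps(\theta,Y^{\tau,\eps}_\theta)|\le|(Y_\theta-Y^{\tau,\eps}_\theta)/\tau|\to|\overline\nabla g(\theta,Y_\theta)|$. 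To repair your argument you would have to replace the gradient-selection claim by these two one-sided mechanisms, or restrict to a setting where the barycenter provably equals the minimal-norm subgradient on every face visited by the curves --- which is not available in general dimension.
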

Theorem \ref{mainresult} can be seen as the main result of this article. In particular, it implies that any limit point as $\eps \downarrow 0$ of a sequence of minimizers of $L_{\eps}$ is a minimizer of $L$. So, one can rigorously obtain an effective action to describe the double limit 
$\lim_{\epsilon\downarrow 0}\lim_{\eta\downarrow 0}$ for the solution of the SDE \eqref{eq:perturbated_companion}. Notice that the lower semi-continuity of $L$ is a direct corollary of the $\Gamma-$convergence. In addition, the fact that $L$ has compact sublevels will be clear from the proof. Hence, the existence of global minimizers for $L$ (and hence for all the forthcoming functionals) follows from the direct method of calculus of variations.

We will prove Theorem \ref{mainresult} in Section~\ref{sec:proof} below, but before doing so, let us show that for a specific choice of $\beta$, we recover MAG.

\subsection{A regime where Monge-Amp\`ere gravitation arises}
\label{subsec:change_of_time}
Let us take $\beta(t) := t$ which corresponds to $\alpha(t) := 1/\sqrt{t}$. Through the change of variable:
\[
t = \exp(2 \theta), \quad Z_{\theta} = X_{\exp(2\theta)},
\]
we observe that for all $\XX \in C^0([t_0,t_1]; (\R^d)^N)$, $L(\XX) = \Lambda (\ZZ)$ with:
\begin{equation*}
\Lambda ( \ZZ ) = \left\{
\begin{aligned}
&\int_{\theta_0}^{\theta_1}\left| \dot{Z}_\theta - (Z_\theta - \overline{\nabla} f( Z_\theta)) \right|^2 \D \theta , && \mbox{if } \ZZ \in H^1([\theta_0, \theta_1]; (\R^d)^N),\\[-10pt]
&&& Z_{\theta_0} \in \{P^\sigma\} \mbox{ and } Z_{\theta_1} \in \{Q^\sigma\},\\[5pt]
&+\infty, && \mbox{else.}
\end{aligned}
\right.
\end{equation*}
(Recall the definition \eqref{eq:deff} of $f$.) Unexpectedly, this action is \textit{exactly} the one previously suggested by the third author in \cite{brenier2011modified} to include
dissipative phenomena (such as sticky collisions in one space dimension) in the Monge-Amp\`ere gravitational model!

It turns out to be equivalent to the following one:
\begin{equation*}
\Lambda' ( \ZZ ) = \left\{
\begin{aligned}
&\int_{\theta_0}^{\theta_1}\left\{ | \dot{Z}_\theta |^2 + |Z_\theta - \overline{\nabla} f( Z_\theta) |^2\right\} \D \theta , && \mbox{if } \ZZ \in H^1([\theta_0, \theta_1]; (\R^d)^N),\\[-10pt]
&&& Z_{\theta_0} \in \{P^\sigma\} \mbox{ and } Z_{\theta_1} \in \{Q^\sigma\},\\[5pt]
&+\infty, && \mbox{else.}
\end{aligned}
\right.
\end{equation*}
(By expanding the square and remarking that the mixed product is an exact temporal derivatives, so that its integral only involves the endpoints $P$ and $Q$.)

\subsection{Application of the least action principle}
\label{subsec:LAP}
We observe that the points $Z$ where $f$ is differentiable are those for which the maximum in the definition~\eqref{eq:deff} of $f$ is reached by a unique permutation $\sigma_{\mathrm{opt}}$ so that $\nabla f(Z)$ is nothing but $A^{\sigma_{\mathrm{opt}}}$. For such points $Z$, we get
\begin{equation*}
\frac{|Z-\nabla f(Z)|^2}{2}=\frac{|Z-A^{\sigma_{\mathrm{opt}}}|^2}{2}=
\frac{|Z|^2+|A^{\sigma_{\mathrm{opt}}}|^2}{2}-Z\cdot A^{\sigma_{\mathrm{opt}}}=\frac{|Z|^2+|A|^2}{2}-f(Z)
\end{equation*}
(by definition of $f$ and using that $|A^\sigma|=|A|$ for any $\sigma\in\mathfrak{S}_N$),
while, on the set $\mathcal N$ of non-differentiability of $f$, we rather have
\[
\frac{|Z-\overline\nabla f(Z)|^2}{2}< \frac{|Z|^2+|A|^2}{2}-f(Z).
\]
So the action we have obtained in the previous section, namely $\Lambda'$, bounds from below
\begin{equation*}
\Lambda^+ ( \ZZ ) = \left\{
\begin{aligned}
&\hspace{-2pt}\int_{\theta_0}^{\theta_1}\hspace{-5pt}\left\{ | \dot{Z}_\theta|^2 + \frac{|Z_\theta|^2+|A|^2}{2}- f(Z_{\theta}) \right\} \D \theta \D \theta , && \hspace{-5pt}\mbox{if } \ZZ \in H^1([\theta_0, \theta_1]; (\R^d)^N),\\[-10pt]
&&& Z_{\theta_0} \in \{P^\sigma\} \mbox{ and } Z_{\theta_1} \in \{Q^\sigma\},\\[5pt]
&+\infty, && \mbox{else.}
\end{aligned}
\right.
\end{equation*}
The second action is definitely strictly larger than the first one for those curves $\theta\rightarrow Z_\theta$  which take values in $\mathcal N$ (where $f$ is  not differentiable) on a set of times $\theta\in [\theta_0,\theta_1]$ which is not negligible for the Lebesgue measure. So, the least action principle may provide different optimal curves, depending on the action we choose. However, if a curve is optimal for $\Lambda'$ and almost surely takes value outside of $\mathcal N$, then it must also be optimal for $\Lambda^+$. Clearly, it is much easier to get the optimality equation for such a curve, by working with $\Lambda^+$ rather than with $\Lambda'$. By varying action $\Lambda^+$, we get, as optimality equation,
\begin{gather*}
\frac{\D\,^2 Z_\theta}{\D \theta^2} = Z_\theta - A^{\sigma_{\mathrm{opt}}}, \\
\sigma_{\mathrm{opt}} = \underset{\sigma \in \mathfrak{S}_N}{\mathrm{Arginf}} \| Z - A^\sigma \|^2,
\end{gather*}
which is the discrete dynamic announced in the introduction.

Of course, these equations have to be suitably modified for those curves which are optimal for action $\Lambda'$ but not for $\Lambda^+$ because they takes values in $\mathcal{N}$ for a non negligible amount of time. At this stage, we do not know how to do it. However, at least in the one-dimensional case $d=1$, such modifications are tractable and correspond to sticky collisions as $x_i(t) = x_j(t)$ occurs for different "particles" of labels $i \ne j$ and during interval of times of strictly positive Lebesgue measure, see Section \ref{sec:sticky}.

\section{Proof of the \texorpdfstring{$\boldsymbol{\Gamma-}$}-convergence}
\label{sec:proof}
The purpose of this this section is to prove Theorem~\ref{mainresult}. 
\subsection{The proof as a consequence of three lemmas}
As we will see, Theorem~\ref{mainresult} will be a consequence of three lemmas that we state below. Lemmas~\ref{lem:continuity_I} and~\ref{lem:Gamma_cv_K} both involve a family of smooth functions $(g_\eps)_{\eps >0}$ on $[\theta_0,\theta_1]\times\R^p$ for some $\theta_0 < \theta_1$ and $p \in \N$, pointwise converging to a function $g$. On these functions, we will assume the following:
\begin{Ass}
\label{ass:g_eps}
\begin{enumerate}[label=(H\arabic*)]
\item \label{H:zero_at_zero} For all $\eps>0$ and $\theta \in [\theta_0,\theta_1]$, $g_\eps(\theta, 0) = 0$.
\item \label{H:convex_eps} For all $\eps>0$ and $\theta \in [\theta_0,\theta_1]$, $g_\eps(\theta,\bullet)$ is convex.
\item \label{H:convex} For all $\theta \in [\theta_0,\theta_1]$, $g(\theta,\bullet)$ is convex, and the distributional derivative $\partial_\theta g$ is a $L^1_{\mathrm{loc}}$ function such that for all $\YY \in H^1([\theta_0, \theta_1]; (\R^d)^N)$, the map $\theta \mapsto g(\theta, Y_\theta)$ is also $H^1$, and for almost all $\theta \in [\theta_0, \theta_1]$, 
\begin{equation}
\label{eq:IPP_g}
\frac{\D \ }{\D \theta} g(\theta, Y_\theta) = \partial_\theta g(\theta, Y_\theta) + \overline\nabla g(\theta, Y_\theta) \cdot \dot Y_\theta.
\end{equation}  
\item \label{H:lip} The map $\nabla g_\eps$ is uniformly bounded, that is:
\begin{equation}
\label{eq:uniform_Lip}
L := \sup_{\eps>0} \sup_{\theta \in [\theta_0, \theta_1]} \sup_{Y \in \R^p} |\nabla g_\eps(\theta, Y)| < + \infty.
\end{equation}
\item \label{H:small_dt_nabla} The map $\partial_\theta \nabla g_\eps$ is uniformly bounded, that is:
\begin{equation}
\label{eq:small_dt_nabla}
M := \sup_{\eps>0} \sup_{\theta \in [\theta_0, \theta_1]} \sup_{Y \in \R^p} |\partial_\theta \nabla g_\eps(\theta, Y)| < + \infty.
\end{equation}
\end{enumerate}
\end{Ass}
In order to keep the proofs simple, we did not try to optimize these assumptions for Lemmas~\ref{lem:continuity_I} and~\ref{lem:Gamma_cv_K}, which are probably true in a far more general context. However, as we will see in the proof of Theorem~\ref{mainresult}, it suffices to check these assumptions for the family $(f_\eps)_{\eps>0}$ after suitable change of temporal and spatial scale. This is done in Lemma~\ref{lem:estim_g_eps}.

\begin{Lem}
\label{lem:continuity_I}
Let us consider $\theta_0< \theta_1 \in \R$, $\eta \in C^\infty([\theta_0,\theta_1]; \R_+^*)$ and a family $(g_\eps)_{\eps > 0}$ of smooth functions from $[\theta_0, \theta_1]\times \R^p$ to $\R$ pointwise converging to a function $g$, which satisfy~\ref{H:zero_at_zero}, \ref{H:convex}, \ref{H:lip} and~\ref{H:small_dt_nabla} from Assumptions~\ref{ass:g_eps}.
If a family of curves $(\YY^\eps)_{\eps>0}$ in $H^1([\theta_0, \theta_1]; \R^p)$ uniformly converges to a curve $\YY \in H^1([\theta_0,\theta_1]; \R^p)$, then
\begin{equation*}
\int_{\theta_0}^{\theta_1} \dot Y^\eps_\theta \cdot \nabla g_\eps(\theta, Y_\theta^\eps)\eta(\theta) \D \theta \underset{\eps \to 0}{\longrightarrow} \int_{\theta_0}^{\theta_1} \dot Y_\theta \cdot \overline\nabla g(\theta, Y_\theta)\eta(\theta) \D \theta.
\end{equation*}
\end{Lem}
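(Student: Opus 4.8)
The plan is to split the integrand into a part involving $\partial_\theta g_\eps$ plus a total derivative, and handle each piece separately. Using the chain rule for the smooth functions $g_\eps$, write
\[
\dot Y^\eps_\theta \cdot \nabla g_\eps(\theta, Y^\eps_\theta) = \frac{\D\ }{\D\theta}\bigl[ g_\eps(\theta, Y^\eps_\theta)\bigr] - \partial_\theta g_\eps(\theta, Y^\eps_\theta).
\]
Multiply by $\eta(\theta)$ and integrate by parts on the total-derivative term; this moves the $\theta$-derivative onto $\eta$, which is smooth and fixed. So the left-hand side becomes
\[
\bigl[ \eta g_\eps(\cdot, Y^\eps_\cdot)\bigr]_{\theta_0}^{\theta_1} - \int_{\theta_0}^{\theta_1} \dot\eta(\theta) g_\eps(\theta, Y^\eps_\theta)\D\theta - \int_{\theta_0}^{\theta_1}\partial_\theta g_\eps(\theta, Y^\eps_\theta)\eta(\theta)\D\theta.
\]
The same identity holds with $g$ in place of $g_\eps$ and $\overline\nabla g$ in place of $\nabla g_\eps$, using \ref{H:convex} (specifically the chain rule \eqref{eq:IPP_g}) to justify the integration by parts for the limiting curve. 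So it suffices to pass to the limit in each of these three terms.

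The boundary and $\dot\eta$ terms are the easy part: from \ref{H:lip} the family $(g_\eps)$ is uniformly Lipschitz in the space variable, and combined with \ref{H:zero_at_zero} this gives $|g_\eps(\theta, Y)| \le L|Y|$, so $(g_\eps)$ is locally uniformly bounded; pointwise convergence $g_\eps \to g$ plus this uniform Lipschitz bound upgrades (by Arzelà–Ascoli on compact sets, or a direct $3\eps$ argument) to local uniform convergence $g_\eps \to g$. Since $Y^\eps \to Y$ uniformly, all values $Y^\eps_\theta$ lie in a fixed compact set, so $g_\eps(\theta, Y^\eps_\theta) \to g(\theta, Y_\theta)$ uniformly in $\theta$; this handles the boundary term and, by dominated convergence, the $\dot\eta$ integral. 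Note also that $g$ inherits from \ref{H:lip} an $L$-Lipschitz bound in space, so $|\overline\nabla g(\theta,\cdot)| \le L$, which will be used below.

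The main obstacle is the term $\int \partial_\theta g_\eps(\theta, Y^\eps_\theta)\eta(\theta)\D\theta$, which must be shown to converge to $\int \partial_\theta g(\theta, Y_\theta)\eta(\theta)\D\theta$. Here \ref{H:small_dt_nabla} is the key ingredient: $\partial_\theta\nabla g_\eps$ is uniformly bounded by $M$, so for each fixed $\theta$ the functions $Y \mapsto \partial_\theta g_\eps(\theta, Y)$ are $M$-Lipschitz in $Y$ — wait, more precisely one shows the family $\{\partial_\theta g_\eps(\theta,\cdot) - \partial_\theta g_\eps(\theta,0)\}$ is $M$-Lipschitz, and since $\partial_\theta g_\eps(\theta,0)=\partial_\theta[g_\eps(\theta,0)]=0$ by \ref{H:zero_at_zero}, in fact $\partial_\theta g_\eps(\theta,\cdot)$ is itself $M$-Lipschitz with value $0$ at the origin, hence $|\partial_\theta g_\eps(\theta,Y)|\le M|Y|$. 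Thus $\partial_\theta g_\eps(\theta, Y^\eps_\theta)$ is bounded uniformly on the relevant compact set, giving an $L^\infty$ (hence $L^1$ with the finite measure $\eta\D\theta$) domination. It remains to identify the limit: one wants $\partial_\theta g_\eps(\theta, \cdot) \to \partial_\theta g(\theta, \cdot)$ in a suitable sense. Since $g_\eps(\theta,Y)\to g(\theta,Y)$ pointwise, for fixed $Y$ the functions $\theta\mapsto g_\eps(\theta,Y)$ converge pointwise and have derivatives $\partial_\theta g_\eps(\theta,Y)$ bounded by $M|Y|$; by dominated convergence against test functions, $\partial_\theta g_\eps(\cdot,Y) \to \partial_\theta g(\cdot,Y)$ weakly-$\ast$ in $L^\infty(\theta_0,\theta_1)$ for each fixed $Y$, and the uniform $M$-Lipschitz-in-$Y$ bound lets one make this locally uniform in $Y$ (pass through a countable dense set of $Y$'s). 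Combining the convergence $Y^\eps_\theta\to Y_\theta$ with this equicontinuity-in-$Y$ of the $\partial_\theta g_\eps$, plus the weak-$\ast$ convergence in $\theta$, yields $\int \partial_\theta g_\eps(\theta,Y^\eps_\theta)\eta\D\theta \to \int \partial_\theta g(\theta,Y_\theta)\eta\D\theta$. Reassembling the three limits, and using \eqref{eq:IPP_g} in reverse to fold the boundary, $\dot\eta$, and $\partial_\theta g$ terms back into $\int \dot Y_\theta\cdot\overline\nabla g(\theta,Y_\theta)\eta\D\theta$, completes the proof. I expect the delicate point to be making the convergence of $\partial_\theta g_\eps$ uniform enough in the space variable to survive composition with the moving curves $Y^\eps$; everything else is dominated convergence and the chain rule.
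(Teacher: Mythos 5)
Your proposal is correct in substance and follows the same skeleton as the paper's proof: the chain rule for $g_\eps$ (and~\eqref{eq:IPP_g} from~\ref{H:convex} for $g$), integration by parts against $\eta$, and then term-by-term limits, with the boundary and $\eta'$ terms handled exactly as in the paper via~\ref{H:zero_at_zero}, \ref{H:lip}, pointwise convergence and dominated convergence. The only real divergence is in the treatment of $\int \partial_\theta g_\eps(\theta, Y^\eps_\theta)\,\eta(\theta)\,\D\theta$, which you correctly identify as the main obstacle. The paper argues that for every fixed $\theta$ the family $(\partial_\theta g_\eps(\theta,\bullet))_{\eps>0}$ is precompact for local uniform convergence (by~\ref{H:zero_at_zero} and~\ref{H:small_dt_nabla}) and that its only possible limit point is $\partial_\theta g(\theta,\bullet)$; this yields convergence of $\partial_\theta g_\eps(\theta, Y^\eps_\theta)$ to $\partial_\theta g(\theta, Y_\theta)$ for each $\theta$, so the composition with the moving curve is immediate and dominated convergence finishes. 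You instead establish weak-$*$ convergence in $\theta$ of $\partial_\theta g_\eps(\cdot,Y)$ for each fixed $Y$; note that this does not by itself allow you to plug in the $\theta$-dependent argument $Y_\theta$ (or $Y^\eps_\theta$): you still need to freeze the curve, e.g.\ replace $Y^\eps_\theta$ by $Y_\theta$ using the $M$-Lipschitz-in-$Y$ bound from~\ref{H:small_dt_nabla}, then approximate $Y_\theta$ by a piecewise constant curve (uniform continuity) and apply the fixed-$Y$ weak-$*$ convergence on each subinterval, controlling the approximation errors again by the equi-Lipschitz bound (which also passes to the limit and gives an a.e.\ Lipschitz-in-$Y$ representative of $\partial_\theta g$). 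This step is routine but is exactly the point you leave at the level of ``combining\dots yields'', whereas the paper's pointwise-in-$\theta$ identification makes it trivial; on the other hand, your weak-$*$ route is slightly more robust, since it never requires the pointwise-in-$\theta$ limit of $\partial_\theta g_\eps$ to exist, only the distributional identification, so it would survive a weakening of the hypotheses.
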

\begin{Lem}
\label{lem:Gamma_cv_K}
Let us consider $\theta_0< \theta_1 \in \R$, $\eta \in C^\infty([\theta_0,\theta_1]; \R_+^*)$ and a family $(g_\eps)_{\eps > 0}$ of smooth functions from $[\theta_0, \theta_1]\times \R^p$ to $\R$ pointwise converging to a function $g$, and satisfying~\ref{H:convex_eps}, \ref{H:lip} and~\ref{H:small_dt_nabla} from Assumptions~\ref{ass:g_eps}.
Let us fix $R,S \in \R^p$ and define for $\eps>0$ and $\YY \in C^0([\theta_0, \theta_1]; \R^p)$:
\begin{gather*}
K_\eps(\YY) := \left\{\begin{aligned}  
&\frac{1}{2} \int_{\theta_0}^{\theta_1} \left\{ |\dot Y_\theta|^2 + |\nabla g_\eps(\theta , Y_\theta) |^2 \right\} \eta(\theta) \D \theta, &&\mbox{if }\YY \in H^1([\theta_1, \theta_1]; \R^p) \\[-10pt]
&&& Y_{\theta_0} =R \mbox{ and } Y_{\theta_1} = S,\\[5pt]
&+\infty, &&\mbox{else},
\end{aligned}
\right.\\
K(\YY) := \left\{\begin{aligned}  
&\frac{1}{2} \int_{\theta_0}^{\theta_1} \left\{ |\dot Y_\theta|^2 + |\overline\nabla g(\theta , Y_\theta) |^2 \right\} \eta(\theta) \D \theta, &&\mbox{if }\YY \in H^1([\theta_1, \theta_1]; \R^p) \\[-10pt]
&&& Y_{\theta_0} =R \mbox{ and } Y_{\theta_1} =S ,\\[5pt]
&+\infty, &&\mbox{else}.
\end{aligned}
\right.
\end{gather*}
Then $(K_\eps)_{\eps >0 }$ $\Gamma-$converges to $K$ for the topology on uniform convergence of $C^0([\theta_0, \theta_1]; \R^p)$.
\end{Lem}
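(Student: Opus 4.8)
\emph{Approach.} $\Gamma$-convergence of $(K_\eps)$ to $K$ for the uniform topology means two things: the \emph{liminf inequality} --- whenever $\YY^\eps\to\YY$ uniformly, $\liminf_{\eps\to0}K_\eps(\YY^\eps)\ge K(\YY)$ --- and the existence, for each $\YY$, of a \emph{recovery sequence} $\YY^\eps\to\YY$ uniformly with $\limsup_{\eps\to0}K_\eps(\YY^\eps)\le K(\YY)$. Throughout I use that $\eta$ is bounded above and below by positive constants on the compact interval $[\theta_0,\theta_1]$, and that the pointwise limit $g(\theta,\cdot)$ of the convex functions $g_\eps(\theta,\cdot)$ is itself convex, finite-valued and $L$-Lipschitz by~\ref{H:lip}, so that $\partial g(\theta,\cdot)$ is everywhere nonempty and $\overline{\nabla}g(\theta,\cdot)$ is well defined.

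\emph{The liminf inequality.} Assume the liminf is finite and pass to a subsequence along which $K_\eps(\YY^\eps)$ converges to it and stays bounded; then each $\YY^\eps$ is in $H^1$ with $Y^\eps_{\theta_0}=R$, $Y^\eps_{\theta_1}=S$ (conditions that pass to the uniform limit), $\dot Y^\eps$ is bounded in $L^2$, and $\nabla g_\eps(\cdot,Y^\eps_\cdot)$ is bounded in $L^\infty$ by~\ref{H:lip}. Extract weak limits $\dot Y^\eps\rightharpoonup\dot Y$ in $L^2$ and $\nabla g_\eps(\cdot,Y^\eps_\cdot)\rightharpoonup w$ in $L^2$ (with $|w|\le L$). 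By weak lower semicontinuity of the convex functional $(v,b)\mapsto\tfrac12\int\{|v|^2+|b|^2\}\eta$,
\[
\liminf_{\eps\to0}K_\eps(\YY^\eps)\ \ge\ \tfrac12\int_{\theta_0}^{\theta_1}\{|\dot Y_\theta|^2+|w(\theta)|^2\}\,\eta(\theta)\,\D\theta .
\]
It remains to show $w(\theta)\in\partial g(\theta,Y_\theta)$ for a.e.\ $\theta$, for then $|w(\theta)|\ge|\overline{\nabla}g(\theta,Y_\theta)|$ by Definition~\ref{def:extended_gradient} and we are done. Start from the subgradient inequality for the convex smooth $g_\eps(\theta,\cdot)$ (assumption~\ref{H:convex_eps}): $g_\eps(\theta,Z)-g_\eps(\theta,Y^\eps_\theta)\ge\nabla g_\eps(\theta,Y^\eps_\theta)\cdot(Z-Y^\eps_\theta)$ for every $Z\in\R^p$. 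The left side is bounded by $L|Z-Y^\eps_\theta|$ (uniformly in $\eps,\theta$) and converges pointwise in $\theta$ to $g(\theta,Z)-g(\theta,Y_\theta)$, using the uniform convergence $Y^\eps\to Y$, the pointwise convergence $g_\eps\to g$, and the Lipschitz bound; the right side, the product of a weakly convergent factor and a uniformly convergent one, passes to the limit after integration against any $\phi\in C([\theta_0,\theta_1];\R_+)$. Hence $\int\phi\{g(\theta,Z)-g(\theta,Y_\theta)-w(\theta)\cdot(Z-Y_\theta)\}\,\D\theta\ge0$ for all such $\phi$, so the integrand is nonnegative a.e.; running $Z$ through a countable dense subset of $\R^p$ and using continuity of $g(\theta,\cdot)$ yields $w(\theta)\in\partial g(\theta,Y_\theta)$ a.e.

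\emph{The recovery sequence.} The constant choice $\YY^\eps=\YY$ fails in general: on the non-differentiability set of $g(\theta,\cdot)$ the cluster points of $\nabla g_\eps(\theta,Y_\theta)$ only lie in $\partial g(\theta,Y_\theta)$ and may have strictly larger norm than $\overline{\nabla}g(\theta,Y_\theta)$. Instead, for $\lambda>0$ put $Y^{\eps,\lambda}_\theta:=\operatorname{prox}_{\lambda g_\eps(\theta,\cdot)}(Y_\theta)=\operatorname{argmin}_{Z}\bigl\{g_\eps(\theta,Z)+\tfrac1{2\lambda}|Z-Y_\theta|^2\bigr\}$, well defined by convexity of $g_\eps(\theta,\cdot)$, with $\nabla g_\eps(\theta,Y^{\eps,\lambda}_\theta)=(Y_\theta-Y^{\eps,\lambda}_\theta)/\lambda=\nabla_Y g^\lambda_\eps(\theta,Y_\theta)$, the gradient of the Moreau envelope $g^\lambda_\eps$. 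Three observations: (a) since $g_\eps(\theta,\cdot)$ is $L$-Lipschitz, $|Y^{\eps,\lambda}_\theta-Y_\theta|\le\lambda L$ for all $\eps,\theta$, so $Y^{\eps,\lambda}\to\YY$ uniformly as $\lambda\to0$, uniformly in $\eps$; (b) for fixed $\lambda$ the strongly convex minimizers converge (pointwise convergence of convex functions being locally uniform), hence $\nabla_Y g^\lambda_\eps(\theta,Y_\theta)\to\nabla_Y g^\lambda(\theta,Y_\theta)$ as $\eps\to0$ with $|\nabla_Y g^\lambda_\eps|\le L$, and as $\lambda\to0$ the Yosida approximation obeys $\nabla_Y g^\lambda(\theta,Y_\theta)\to\overline{\nabla}g(\theta,Y_\theta)$ with $|\nabla_Y g^\lambda(\theta,Y_\theta)|\le|\overline{\nabla}g(\theta,Y_\theta)|$ (a classical fact on maximal monotone operators); (c) differentiating $Y^{\eps,\lambda}_\theta=Y_\theta-\lambda\nabla g_\eps(\theta,Y^{\eps,\lambda}_\theta)$ in $\theta$ and using $\nabla^2 g_\eps\ge0$ (so $(\mathbb{I}+\lambda\nabla^2 g_\eps)^{-1}$ is a contraction) together with~\ref{H:small_dt_nabla}, one gets $|\dot Y^{\eps,\lambda}_\theta|\le|\dot Y_\theta|+\lambda M$ a.e., uniformly in $\eps$. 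By (b)--(c) and dominated convergence, $\limsup_{\eps\to0}K_\eps(Y^{\eps,\lambda})\le\tfrac12\int\{|\dot Y_\theta|^2+|\overline{\nabla}g(\theta,Y_\theta)|^2\}\eta+O(\lambda)$, once one also corrects $Y^{\eps,\lambda}$ near $\theta_0,\theta_1$ to restore the values $R=Y_{\theta_0}$, $S=Y_{\theta_1}$ --- a modification of size $\le\lambda L$ spread over time-intervals of length $\sqrt\lambda$, costing $O(\sqrt\lambda)$ in the action. Letting $\lambda\to0$ and diagonalizing (choosing $\lambda(\eps)\to0$ slowly, using that the closeness in (a) is uniform in $\eps$) yields a genuine recovery sequence $\YY^\eps:=Y^{\eps,\lambda(\eps)}$.

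\emph{Main obstacle.} The liminf inequality is routine: weak compactness from~\ref{H:lip}, convex lower semicontinuity, and stability of the subgradient inclusion under weak limits. The real work is the recovery sequence, where one juggles two scales --- the \emph{given} regularization parameter $\eps$ and the \emph{auxiliary} proximal parameter $\lambda$. One must send $\lambda\to0$ slowly enough, relative to the rate of $g_\eps\to g$, that the Moreau gradient $\nabla_Y g^\lambda_\eps(\theta,Y_\theta)$ still reaches the \emph{minimal}-norm subgradient $\overline{\nabla}g(\theta,Y_\theta)$ and not merely some element of $\partial g(\theta,Y_\theta)$, while controlling --- uniformly in $\eps$ --- the extra kinetic energy produced by the proximal shift (exactly what assumption~\ref{H:small_dt_nabla} is for) and keeping the perturbed curves uniformly close to $\YY$; the concluding diagonal extraction is then standard.
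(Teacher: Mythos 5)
Your proposal is correct, and its two halves compare differently with the paper's proof. For the $\Gamma$-$\limsup$ you follow essentially the same construction as the paper: the proximal/resolvent regularization $\theta\mapsto J_{\lambda,g_\eps(\theta,\bullet)}(Y_\theta)$, the bound $|\dot Y^{\eps,\lambda}_\theta|\le|\dot Y_\theta|+\lambda M$ obtained from convexity and~\ref{H:small_dt_nabla}, the chain of resolvent estimates of Proposition~\ref{prop:resovent} to reach $|\overline\nabla g(\theta,Y_\theta)|$, an affine-type correction near $\theta_0,\theta_1$ of cost $O(\sqrt\lambda)$, and a diagonal choice $\lambda(\eps)\to0$; only cosmetic differences (Moreau--Yosida language, and invoking the classical convergence of the Yosida approximation to the minimal selection where the paper only needs the one-sided inequality~\eqref{eq:estim_slope}). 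For the $\Gamma$-$\liminf$ you take a genuinely different route: the paper argues pointwise, via Fatou's lemma combined with the lower semicontinuity of the minimal slope along pointwise-converging convex functions (Proposition~\ref{prop:lsc_slope}), whereas you use the uniform bound~\ref{H:lip} to extract a weak $L^2$ limit $w$ of $\nabla g_\eps(\cdot,Y^\eps_\cdot)$, identify $w(\theta)\in\partial g(\theta,Y_\theta)$ a.e.\ by passing to the limit in the subgradient inequality against nonnegative test functions, and conclude by minimality of $\overline\nabla g$ in the subdifferential together with joint weak lower semicontinuity of the quadratic functional. Your variant is self-contained (it does not need the slope-semicontinuity proposition) and illustrates the general principle of stability of subgradient inclusions under weak convergence, at the price of subsequence extractions and the density/measurability bookkeeping for the a.e.\ inclusion; the paper's version is shorter once Proposition~\ref{prop:lsc_slope} is available and requires no weak compactness at all. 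Both are complete arguments, and your use of the hypotheses (convexity of $g$ inherited from~\ref{H:convex_eps}, boundedness from~\ref{H:lip}, the velocity contraction from~\ref{H:small_dt_nabla}) matches what the statement actually assumes.
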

\begin{Lem}
\label{lem:estim_g_eps}
With the notations of Theorem~\ref{mainresult}, let us call $\theta_0 := \log t_0 /2$, $\theta_1 := \log t_1 / 2$, $p=dN$ and for $\theta \in [\theta_0, \theta_1]$, $\eps>0$ and $Y \in (\R^d)^N$:
\begin{equation}
\label{eq:def_g_eps}
g_\eps(\theta, Y) := \frac{f_\eps(\exp(2\theta), \exp(\theta) Y)}{\exp(2\theta)} \qquad  \mbox{and} \qquad g(\theta, Y):= \frac{f(\exp(\theta) Y)}{\exp(2\theta)}.
\end{equation}  
Then $(g_\eps)_{\eps>0}$ pointwise converges to $g$, and they satisfy~\ref{H:zero_at_zero}, \ref{H:convex_eps}, \ref{H:convex}, \ref{H:lip} and \ref{H:small_dt_nabla} from Assumptions~\ref{ass:g_eps}.
\end{Lem}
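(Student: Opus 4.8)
To prove Lemma~\ref{lem:estim_g_eps}, the plan is to make $g_\eps$ and $g$ completely explicit, to dispatch \ref{H:zero_at_zero}, \ref{H:convex_eps}, \ref{H:convex} and \ref{H:lip} by inspection, and to spend all the effort on \ref{H:small_dt_nabla}. First I would substitute $t=\exp(2\theta)$ and $X=\exp(\theta)Y$ in \eqref{eq:def_f_eps}: using the cancellation $\exp(\theta)Y\cdot A^\sigma/(\exp(2\theta)\eps)=Y\cdot A^\sigma/(\eps\exp(\theta))$ and dividing by $\exp(2\theta)$ one gets
\[
g_\eps(\theta,Y)=\eps\log\Bigl[\frac{1}{N!}\sum_{\sigma\in\perm}\exp\Bigl(\frac{Y\cdot A^\sigma}{\eps\exp(\theta)}\Bigr)\Bigr],
\]
while the positive $1$-homogeneity of $f$ gives $g(\theta,Y)=\exp(-2\theta)f(\exp(\theta)Y)=\exp(-\theta)f(Y)$. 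The pointwise convergence $g_\eps\to g$ is then nothing but \eqref{eq:deff} read at the fixed point $(\exp(2\theta),\exp(\theta)Y)$.

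Next I would check the elementary hypotheses. Hypothesis \ref{H:zero_at_zero} holds because $\tfrac{1}{N!}\sum_\sigma\exp(0)=1$; \ref{H:convex_eps} because $g_\eps(\theta,\cdot)$ is a positive multiple of a log-sum-exp of affine maps; the convexity part of \ref{H:convex} because $g(\theta,\cdot)=\exp(-\theta)f$ and $f$ is a supremum of linear functions. For the chain rule in \ref{H:convex}, note that $\partial_\theta g(\theta,Y)=-\exp(-\theta)f(Y)=-g(\theta,Y)$ is continuous, hence in $L^1_{\mathrm{loc}}$, and that $\overline{\nabla}g(\theta,Y)=\exp(-\theta)\overline{\nabla}f(Y)$ since $\partial(\exp(-\theta)f)(Y)=\exp(-\theta)\partial f(Y)$ and rescaling by a positive scalar preserves the minimal-norm element. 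Because $f$ is $|A|$-Lipschitz, for $\YY\in H^1$ the curve $\theta\mapsto f(Y_\theta)$ is absolutely continuous with a.e.\ derivative bounded by $|A|\,|\dot Y_\theta|\in L^2$, so $\theta\mapsto g(\theta,Y_\theta)=\exp(-\theta)f(Y_\theta)$ is $H^1$; and the classical identity $\tfrac{\D\,}{\D\theta}f(Y_\theta)=v\cdot\dot Y_\theta$ — valid for \emph{every} $v\in\partial f(Y_\theta)$, by writing the subgradient inequality at $Y_\theta$ and letting the increment go to $0$ from the left and from the right — yields \eqref{eq:IPP_g} on taking $v=\overline{\nabla}f(Y_\theta)$. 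Finally, differentiating the formula above in $Y$ gives $\nabla_Y g_\eps(\theta,Y)=\exp(-\theta)\,\overline A\!\bigl(Y/(\eps\exp(\theta))\bigr)$, where $\overline A(Z):=\bigl(\sum_\sigma A^\sigma e^{Z\cdot A^\sigma}\bigr)\big/\bigl(\sum_\sigma e^{Z\cdot A^\sigma}\bigr)$ is a convex combination of the $A^\sigma$; hence $|\nabla_Y g_\eps(\theta,Y)|\le\exp(-\theta)\,|A|\le|A|/\sqrt{t_0}$ (using $\theta\ge\theta_0=\tfrac12\log t_0$ and $|A^\sigma|=|A|$), which is \ref{H:lip}.

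The real work is \ref{H:small_dt_nabla}. Differentiating $\nabla_Y g_\eps$ in $\theta$ and setting $Z:=Y/(\eps\exp(\theta))$, the chain rule produces
\[
\partial_\theta\nabla_Y g_\eps(\theta,Y)=-\exp(-\theta)\bigl(\overline A(Z)+C(Z)Z\bigr),
\]
where $C(Z):=D\overline A(Z)$ is the Hessian of $Z\mapsto\log\sum_\sigma e^{Z\cdot A^\sigma}$, i.e.\ the (symmetric positive semidefinite) covariance matrix of the random vector equal to $A^\sigma$ with probability $p_\sigma(Z)\propto e^{Z\cdot A^\sigma}$. The danger is that a factor $1/\eps$ has crept in through the chain rule and $|Z|\to\infty$ as $\eps\downarrow 0$, so the naive estimate $|C(Z)Z|\le\|C(Z)\|\,|Z|$ is worthless. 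The saving fact — the heart of the lemma — is that $C(Z)Z$ stays bounded \emph{uniformly in $Z$}: since $C(Z)\succeq 0$, $|C(Z)Z|^2\le\tr C(Z)\cdot Z^\top C(Z)Z$, and here $\tr C(Z)=\sum_\sigma p_\sigma(Z)|A^\sigma-\overline A(Z)|^2\le\sum_\sigma p_\sigma(Z)|A^\sigma|^2=|A|^2$ because $|A^\sigma|=|A|$, while, writing $m^\ast:=\max_\sigma Z\cdot A^\sigma$ and $d_\sigma:=m^\ast-Z\cdot A^\sigma\ge 0$,
\[
Z^\top C(Z)Z=\mathrm{Var}_{p(Z)}(Z\cdot A^\sigma)\le\sum_\sigma p_\sigma(Z)\,d_\sigma^2=\frac{\sum_\sigma e^{-d_\sigma}d_\sigma^2}{\sum_\tau e^{-d_\tau}}\le\sum_\sigma e^{-d_\sigma}d_\sigma^2\le N!\,\sup_{d\ge 0}\bigl(d^2 e^{-d}\bigr)=\frac{4\,N!}{e^2},
\]
using $\sum_\tau e^{-d_\tau}\ge 1$. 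Hence $|C(Z)Z|\le 2|A|\sqrt{N!}/e$, and since $\exp(-\theta)\le 1/\sqrt{t_0}$ and $|\overline A(Z)|\le|A|$ we obtain \eqref{eq:small_dt_nabla} with, say, $M=|A|\bigl(1+2\sqrt{N!}/e\bigr)/\sqrt{t_0}$.

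In short, everything reduces to elementary manipulations except the bound \ref{H:small_dt_nabla}: I expect the only real obstacle to be the uniform-in-$\eps$ control of $\partial_\theta\nabla g_\eps$, whose content is exactly the uniform decay $\sup_Z|C(Z)Z|<\infty$ of the rescaled soft-max covariance along rays, which compensates the apparently divergent factor $1/\eps$.
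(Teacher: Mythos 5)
Your proposal is correct, and up to the key estimate it follows the same skeleton as the paper: reduce everything to the single log-sum-exp function $h(Z)=\log\bigl[\tfrac1{N!}\sum_\sigma e^{Z\cdot A^\sigma}\bigr]$ via $g_\eps(\theta,Y)=\eps h\bigl(Y/(\eps e^{\theta})\bigr)$ and $g(\theta,Y)=e^{-\theta}f(Y)$, dispose of \ref{H:zero_at_zero}, \ref{H:convex_eps}, \ref{H:lip} by inspection (the paper checks convexity by computing $\mathrm{D}^2h$ as a covariance matrix, you invoke convexity of log-sum-exp — same content), and prove \eqref{eq:IPP_g} by the two-sided subgradient-inequality argument at a.e.\ $\theta$, which is exactly the paper's computation of $G'(\theta)$ phrased through $f$ plus the product rule with $e^{-\theta}$. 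Where you genuinely diverge is \ref{H:small_dt_nabla}, i.e.\ the uniform bound on $\mathrm{D}^2h(Z)\cdot Z$: the paper expands this vector as a double sum over pairs $(\sigma,\eta)$ of permutations and bounds each term $T(\sigma,\eta,X)$ by keeping only the two diagonal terms $\sigma'=\eta'=\sigma$ and $\sigma'=\eta'=\eta$ in the denominator, reducing to the boundedness of $u\mapsto |u|/(e^{-|u|}+e^{|u|})$; you instead exploit the covariance structure, combining the matrix inequality $|CZ|^2\le \tr C\cdot Z^{\top}CZ$ (valid for $C\succeq 0$), the bound $\tr C(Z)\le |A|^2$ coming from $|A^\sigma|=|A|$, and the variance estimate $\mathrm{Var}_{p(Z)}(Z\cdot A^\sigma)\le \sum_\sigma e^{-d_\sigma}d_\sigma^2\le N!\,\sup_{d\ge0}d^2e^{-d}$. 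Both arguments rest on the same mechanism — the exponential weights crush the linear growth of $Z$ along the only direction where it could hurt — but yours is more structural (it never touches the combinatorics of pairs of permutations and would transfer verbatim to any log-sum-exp over a finite family of vectors of equal norm) and produces a clean explicit constant $M=|A|\bigl(1+2\sqrt{N!}/e\bigr)/\sqrt{t_0}$, whereas the paper's term-by-term comparison is more elementary but more computational. No gaps.
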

In the next subsections, we will prove these three lemmas one by one. The most involved one is undoubtedly Lemma~\ref{lem:Gamma_cv_K}, which can be seen as the main step in the proof of Theorem~\ref{mainresult}. Let us start by proving Theorem~\ref{mainresult} using Lemmas~\ref{lem:continuity_I}, \ref{lem:Gamma_cv_K} and~\ref{lem:estim_g_eps}.
\begin{proof}[Proof of Theorem~\ref{mainresult}]
In this proof, the notation $\XX = X_t$ will stand for a generic curve from $[t_0, t_1]$ to $(\R^d)^N$. Associated with $\XX$, we define $\YY = Y_\theta$ the curve from $[\theta_0, \theta_1]$ to $(\R^d)^N$, where $\theta_0 := \log t_0/2$, $\theta_1 := \log t_1 / 2$, and for all $\theta \in [\theta_0, \theta_1]$, $Y_\theta := X_{\exp(2\theta)}/\exp(\theta)$. Note that $\XX$ is $H^1$ if and only if $\YY$ is $H^1$. If $(\XX^\eps)_{\eps>0}$ is a family of curves from $[t_0,t_1]$ to $(\R^d)^N$, we define in the same way the family of corresponding curves $(\YY^\eps)_{\eps>0}$ from $[\theta_0, \theta_1]$ to $(\R^d)^N$.

A quick computation shows that for all $\XX \in H^1([\theta_0, \theta_1]; (\R^d)^N)$, considering $\eta(\theta) := \beta(\exp(2(\theta))$ and $(g_\eps)_{\eps>0},g$ as defined in Lemma~\ref{lem:estim_g_eps}, we have:
\begin{align}
\label{eq:change_of_time_L_eps}
L_\eps(\XX) = \int_{t_0}^{t_1} \bigg| \dot{X}_t - &\frac{X_t - \nabla f_\eps(t,X_t)}{2t} \bigg|^2 \beta(t) \D t  =\frac{1}{2} \int_{\theta_0}^{\theta_1} \left| \dot Y_{\theta} + \nabla g_\eps(\theta, Y_\theta) \right|^2 \eta(\theta) \D \theta \\
\label{eq:decomposition_L_eps} &=  \frac{1}{2} \int_{\theta_0}^{\theta_1} \left\{ |\dot Y_\theta|^2 + |\nabla g_\eps(\theta , Y_\theta) |^2 \right\} \eta(\theta) \D \theta + \int_{\theta_0}^{\theta_1} \dot Y_\theta \cdot \nabla g_\eps(\theta, Y_\theta)\eta(\theta) \D \theta.
\end{align}
and:
\begin{align}
\notag L(\XX) = \int_{t_0}^{t_1} \bigg| \dot{X}_t - &\frac{X_t - \overline{\nabla} f(X_t)}{2t} \bigg|^2 \beta(t) \D t  =\frac{1}{2} \int_{\theta_0}^{\theta_1} \left| \dot Y_{\theta} + \overline\nabla g(\theta, Y_\theta) \right|^2 \eta(\theta) \D \theta \\
\label{eq:decomposition_L}&=  \frac{1}{2} \int_{\theta_0}^{\theta_1} \left\{ |\dot Y_\theta|^2 + |\overline\nabla g(\theta , Y_\theta) |^2 \right\} \eta(\theta) \D \theta + \int_{\theta_0}^{\theta_1} \dot Y_\theta \cdot \overline\nabla g(\theta, Y_\theta)\eta(\theta) \D \theta.
\end{align}
(Note that due to Lemma~\ref{lem:estim_g_eps}, $g$ is convex with respect to the space variable, and so $\overline\nabla g$ is well defined.)

\noindent\underline{Proof of the $\Gamma-\liminf$}. Let $\XX^\eps \underset{\eps \to 0}{\to} \XX$ for the topology of uniform convergence. Of course, we also have $\YY^\eps \underset{\eps \to 0}{\to} \YY$. Without loss of generality, we can suppose
\begin{equation}
\label{eq:limsup_finite}
\sup_{\eps > 0} L_\eps(\XX^\eps) < + \infty.
\end{equation} 
Indeed, if the $\liminf$ of this quantity is infinite, there is nothing to prove, and if the $\liminf$ is finite, up to an extraction, we can reduce ourselves to the case where the $\sup$ is finite.

As $\nabla g_\eps(\theta, Y)$ is bounded uniformly in $\eps, \theta, Y$ (this is~\ref{H:lip}), we easily deduce with~\eqref{eq:change_of_time_L_eps} that this assumption implies
\begin{equation*}
\sup_{\eps > 0} \int_{\theta_0}^{\theta_1} |\dot Y_\theta^\eps|^2 \D \theta < +\infty.
\end{equation*}
In particular, by lower semi-continuity of this $H^1$ seminorm with respect to  uniform convergence, all the curves $\YY^\eps$, $\eps>0$ as well as $\YY$ are in $H^1([\theta_0, \theta_1]; (\R^d)^N)$. In particular, applying Lemma~\ref{lem:continuity_I} thanks to Lemma~\ref{lem:estim_g_eps}, we have:
\begin{equation}
\label{eq:convergence_I}
\int_{\theta_0}^{\theta_1} \dot Y^\eps_\theta \cdot \nabla g_\eps(\theta, Y_\theta^\eps)\eta(\theta) \D \theta \underset{\eps \to 0}{\longrightarrow} \int_{\theta_0}^{\theta_1} \dot Y_\theta \cdot \overline\nabla g(\theta, Y_\theta)\eta(\theta) \D \theta.
\end{equation}

On the other hand, it is clear that under~\eqref{eq:limsup_finite}, for $\eps>0$ sufficiently small, the endpoints of $\XX^\eps$ are stationary, that is $X^\eps_{t_0} = P^{\sigma_0}$ and $X^\eps_{t_1} = Q^{\sigma_1}$ with $\sigma_0,\sigma_1$ independent of $\eps$. So for such $\eps$, $\YY^\eps$ satisfies the endpoint constraint for $K_\eps$ with $R := P^{\sigma_0}/\sqrt{t_0}$ and $S := Q^{\sigma_1}/\sqrt{t_1}$. Hence, applying Lemma~\ref{lem:Gamma_cv_K} thanks to Lemma~\ref{lem:estim_g_eps}, we have:
\begin{equation} 
\label{eq:lsc_K}
 \begin{aligned}
 \frac{1}{2} \int_{\theta_0}^{\theta_1} \bigg\{ |\dot Y_\theta|^2 + &|\overline\nabla g(\theta , Y_\theta) |^2 \bigg\} \eta(\theta) \D \theta = K(\YY) \\ &\leq \liminf_{\eps \to 0} K_\eps (\YY^\eps)= \liminf_{\eps \to 0}\frac{1}{2} \int_{\theta_0}^{\theta_1} \left\{ |\dot Y^\eps_\theta|^2 + |\nabla g_\eps(\theta , Y^\eps_\theta) |^2 \right\} \eta(\theta) \D \theta.
 \end{aligned}
 \end{equation}
 The result follows easily by gathering~\eqref{eq:decomposition_L_eps}, \eqref{eq:convergence_I}, \eqref{eq:lsc_K} and \eqref{eq:decomposition_L}.
 
 \noindent\underline{Proof of the $\Gamma-\limsup$}. Let $\XX \in C^0([t_0, t_1]; (\R^d)^N)$. Without loss of generality, we can suppose that $\XX \in H^1([t_0, t_1]; (\R^d)^N)$ and that it satisfies the endpoint constraint for $L$. In particular, $\YY$ belongs to $H^1([\theta_0, \theta_1]; (\R^d)^N)$ and satisfies the endpoint constraint for $K$ with $R := X_{t_0} / \sqrt{t_0}$ and $S := X_{t_1}/\sqrt{t_1}$. Lemmas~\ref{lem:Gamma_cv_K} and~\ref{lem:estim_g_eps} let us find a family $(\YY^\eps)_{\eps>0}$ converging to the corresponding $\YY$ such that:
 \begin{equation}
 \label{eq:gamma_lisump_K}
 \limsup_{\eps \to 0} K_\eps(\YY^\eps) \leq K(\YY).
 \end{equation}
 In particular $\YY^\eps$ is in $H^1$ for sufficiently small $\eps$, and by Lemmas~\ref{lem:continuity_I} and~\ref{lem:estim_g_eps}, 
 \begin{equation}
 \label{eq:continuity_I_limsup}
\int_{\theta_0}^{\theta_1} \dot Y^\eps_\theta \cdot \nabla g_\eps(\theta, Y_\theta^\eps)\eta(\theta) \D \theta \underset{\eps \to 0}{\longrightarrow} \int_{\theta_0}^{\theta_1} \dot Y_\theta \cdot \overline\nabla g(\theta, Y_\theta)\eta(\theta) \D \theta.
\end{equation}
The result follows easily from~\eqref{eq:decomposition_L_eps}, \eqref{eq:gamma_lisump_K}, \eqref{eq:continuity_I_limsup} and \eqref{eq:decomposition_L}, by noticing that because of \eqref{eq:gamma_lisump_K}, $\YY^\eps$ satisfies the endpoint constraint for $K_\eps$. Hence for such $\eps$, $\XX^\eps$ satisfies the endpoint constraint for $L_\eps$.
\end{proof}
\subsection{Proof of Lemma~\ref{lem:continuity_I}}
The proof of Lemma~\ref{lem:continuity_I} just consists in integrating by parts and using the convergence properties of $(g_\eps)_{\eps>0}$.
\begin{proof}[Proof of Lemma~\ref{lem:continuity_I}]
\underline{Integration by parts}. First, notice that as soon as $\YY \in H^1([\theta_0, \theta_1]; \R^p)$ and $\eps>0$, then $\theta \mapsto g_\eps(\theta, Y_\theta)$ and $\theta \mapsto g(\theta, Y_\theta)$ are also in $H^1$, with for almost every $\theta$:
\begin{equation*}
\frac{\D\ }{\D \theta}g_\eps(\theta, Y_\theta) = \partial_\theta g_\eps(\theta, Y_\theta) + \nabla g_\eps(\theta, Y_\theta)\cdot \dot Y_\theta \qquad \mbox{and} \qquad \frac{\D\ }{\D \theta}g(\theta, Y_\theta) = \partial_\theta g(\theta, Y_\theta) + \overline\nabla g(\theta, Y_\theta)\cdot \dot Y_\theta.
\end{equation*}
It is clear in the case of $g_\eps$ because $g_\eps$ is smooth, and it is the assumption~\ref{H:convex} in the case of~$g$. As a consequence, by an integration by parts, it suffices to prove that whenever $(\YY^\eps)_{\eps>0}$ converges to $\YY$ as $\eps \to 0$ for the topology of uniform convergence,
\begin{align*}
g_\eps(\theta_1, Y^\eps_{\theta_1})& \eta(\theta_1) - g_\eps(\theta_0, Y^\eps_{\theta_0}) \eta(\theta_0) - \int_{\theta_0}^{\theta_1} g_\eps(\theta,Y^\eps_\theta)\eta'(\theta) \D \theta - \int_{\theta_0}^{\theta_1} \partial_\theta g_\eps(\theta,Y^\eps_\theta)\eta(\theta) \D \theta\\
&\underset{\eps \to 0}{\longrightarrow} g(\theta_1, Y_{\theta_1}) \eta(\theta_1) - g(\theta_0, Y_{\theta_0}) \eta(\theta_0) - \int_{\theta_0}^{\theta_1} g(\theta,Y_\theta)\eta'(\theta) \D \theta - \int_{\theta_0}^{\theta_1} \partial_\theta g(\theta,Y_\theta)\eta(\theta) \D \theta.
\end{align*}

\noindent\underline{Convergence term by term}. The convergence
\begin{equation*}
g_\eps(\theta_1, Y^\eps_{\theta_1}) \eta(\theta_1) - g_\eps(\theta_0, Y^\eps_{\theta_0}) \eta(\theta_0) 
\underset{\eps \to 0}{\longrightarrow} g(\theta_1, Y_{\theta_1}) \eta(\theta_1) - g(\theta_0, Y_{\theta_0}) \eta(\theta_0)
\end{equation*}
is an easy consequence of the pointwise convergence and of the uniform Lipschitz bound~\ref{H:lip}.

For the same reason, we have for all $\theta \in [\theta_0, \theta_1]$, $g_\eps(\theta, Y^\eps_\theta)\underset{\eps \to 0}{\longrightarrow} g(\theta, Y_\theta)$. But on the other hand, because of~\ref{H:zero_at_zero} and~\ref{H:lip}, $g_\eps$ is locally bounded, uniformly in $\eps$. Hence, 
\begin{equation*}
 \int_{\theta_0}^{\theta_1} g_\eps(\theta,Y^\eps_\theta)\eta'(\theta) \D \theta
\underset{\eps \to 0}{\longrightarrow}  \int_{\theta_0}^{\theta_1} g(\theta,Y_\theta)\eta'(\theta) \D \theta 
\end{equation*} 
is a consequence of the dominated convergence theorem.

Because of~\ref{H:zero_at_zero} and~\ref{H:small_dt_nabla}, for all $\theta$, $(\partial_\theta g_\eps(\theta, \bullet))_{\eps>0}$ is compact for the topology of local uniform convergence. But its only possible limit point is the distributional derivative $\partial_\theta g$. As a consequence, $(\partial_\theta g_\eps)_{\eps>0}$ converges pointwise to $\partial_\theta g$, and because of the uniform bound~\ref{H:small_dt_nabla}, for all $\theta$, $\partial_\theta g_\eps(\theta, Y^\eps_\theta)\underset{\eps \to 0}{\longrightarrow} \partial_\theta g(\theta, Y_\theta)$. Because of~\ref{H:zero_at_zero} and~\ref{H:small_dt_nabla}, $\partial_\theta g_\eps$ is locally bounded, uniformly in $\eps$, and so 
\begin{equation*}
 \int_{\theta_0}^{\theta_1} \partial_\theta g_\eps(\theta,Y^\eps_\theta)\eta(\theta) \D \theta
\underset{\eps \to 0}{\longrightarrow}  \int_{\theta_0}^{\theta_1} \partial_\theta g(\theta,Y_\theta)\eta(\theta) \D \theta 
\end{equation*} 
is also a consequence of the dominated convergence theorem.
\end{proof}

\subsection{Proof of Lemma~\ref{lem:Gamma_cv_K}}
Before entering the proof of Lemma~\ref{lem:Gamma_cv_K}, we need to state a few standard results concerning the extended gradient $\overline\nabla$ as defined in Definition~\ref{def:extended_gradient}, and its links with the so-called \emph{resolvent map}. These tools could even be set in the infinite dimensional setting, that is in Hilbert spaces~\cite{stromberg1996operation}, or in metric spaces~\cite{ambrosio2008gradient}, and we refer to these works for the proofs.

Consider $h : \R^p \to \R$ a convex function. It is easily shown that for all $X \in \R^p$, 
\begin{equation*}
|\overline{\nabla} h(X)| = \sup_{Y \neq X} \frac{(h(X) - h(Y))_+}{|X-Y|}.
\end{equation*}
The following proposition, that we state without a proof, is an easy consequence of this formula and of the elementary fact that in finite dimension, pointwise convergence of convex functions to a finite valued convex functions implies $\Gamma-$convergence.
\begin{Prop}
	\label{prop:lsc_slope}
	Let $(h_\eps)_{\eps >0}$ be a family of convex functions on $\R^p$ pointwise converging to $h$, and let $(X^\eps)_{\eps>0}$ be a family of points in $\R^p$ converging to $X$. Then
	\begin{equation*}
	|\overline\nabla h(X)| \leq \liminf_{\eps \to 0} |\overline\nabla h_\eps(X^\eps)|.
	\end{equation*}
\end{Prop}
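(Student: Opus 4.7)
The plan is to combine the variational formula
\begin{equation*}
|\overline{\nabla} h(X)| = \sup_{Y \neq X} \frac{(h(X) - h(Y))_+}{|X-Y|}
\end{equation*}
recorded just before the proposition with the classical fact that, in finite dimension, pointwise convergence of convex functions towards a finite-valued convex function is automatically locally uniform on the interior of the limit's domain. Since $h$ is assumed finite everywhere, this upgrade applies on all of $\R^p$.

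First I would fix an arbitrary ``test point'' $Y \in \R^p$ with $Y \neq X$. Because $X^\eps \to X \neq Y$, the points $X^\eps$ are eventually distinct from $Y$, and applying the variational formula to $h_\eps$ at $X^\eps$ with that test point gives the lower bound
\begin{equation*}
|\overline{\nabla} h_\eps(X^\eps)| \geq \frac{(h_\eps(X^\eps) - h_\eps(Y))_+}{|X^\eps - Y|}.
\end{equation*}
The denominator trivially tends to $|X-Y|$. For the numerator, local uniform convergence of $(h_\eps)$ to $h$ forces $h_\eps(X^\eps) \to h(X)$ (since $X^\eps$ eventually lies in a compact neighbourhood of $X$) and $h_\eps(Y) \to h(Y)$, so by continuity of the positive part, passing to the $\liminf$ in the previous display yields
\begin{equation*}
\liminf_{\eps \to 0} |\overline{\nabla} h_\eps(X^\eps)| \geq \frac{(h(X) - h(Y))_+}{|X-Y|}.
\end{equation*}

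The last step is simply to observe that the left-hand side is independent of $Y$, so taking the supremum over all $Y \neq X$ and using the variational formula once more, this time for $h$ itself, concludes the proof.

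There is essentially no obstacle beyond invoking the automatic upgrade from pointwise to locally uniform convergence of convex functions, which is where finite-dimensionality is genuinely used. The bibliographic pointers mentioned by the authors address precisely the more delicate versions of this shortcut that are needed in infinite-dimensional Hilbert or general metric settings; in our $\R^p$ context the statement reduces to the elementary manipulation above.
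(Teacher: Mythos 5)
Your argument is correct and follows exactly the route the paper indicates for this statement (which it leaves unproved): the variational formula for $|\overline\nabla h|$ combined with the classical upgrade of pointwise convergence of convex functions to a finite convex limit in $\R^p$ — the paper phrases this as $\Gamma$-convergence, i.e.\ the inequality $h(X)\leq\liminf_{\eps\to 0}h_\eps(X^\eps)$, which is all your estimate actually needs, while your locally uniform convergence gives the same thing. No gaps; this is essentially the intended proof.
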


For $\tau>0$ and $X \in \R^p$, define the resolvent operator by:
\begin{equation*}
J_{\tau, h}(X) := \underset{Y \in \R^p}{\mbox{argmin }} h(Y) + \frac{|Y-X|^2}{2\tau}.
\end{equation*}
Once again, the following proposition is standard, and we state it here without a proof.
\begin{Prop}
	\label{prop:resovent}
	\begin{enumerate}
	\item We have for all $X \in \R^p$ and $\tau>0$:
	\begin{equation}
	\label{eq:estim_slope}
	|\overline\nabla h(J_{\tau,h}(X)) | \leq \left| \frac{X - J_{\tau, h}(X)}{\tau} \right| \leq |\overline\nabla h(X)|.
	\end{equation}
	
	\item If $h$ is everywhere differentiable and $X \in \R^p$, then the following first order condition holds:
	\begin{equation*}
	 \frac{X - J_{\tau, h}(X)}{\tau} = \nabla h(J_{\tau,h}(X)).
	\end{equation*}
	
	\item If $(h_\eps)_{\eps >0}$ is a family of convex functions on $\R^p$ pointwise converging to $h$, then for all $\tau>0$ and $X \in \R^p$,
	\begin{equation}
	\label{eq:convergence_resolvent}
	J_{\tau, h_\eps}(X) \underset{\eps \to 0}{\longrightarrow} J_{\tau,h}(X).
	\end{equation}
	\end{enumerate}
\end{Prop}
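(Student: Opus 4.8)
The plan is to dispatch the three assertions in order: the first two follow almost immediately from the first-order optimality condition defining $J_{\tau,h}$, while the third is a soft compactness argument.

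First I would record the well-posedness and the optimality condition. For a finite-valued convex $h$ on $\R^p$ and $\tau>0$, the function $Y\mapsto h(Y)+|Y-X|^2/(2\tau)$ is strictly convex and coercive, so the minimizer $Y_\tau:=J_{\tau,h}(X)$ exists and is unique, and the subdifferential optimality condition reads
\begin{equation*}
\frac{X-Y_\tau}{\tau}\in\partial h(Y_\tau).
\end{equation*}
Assertion (2) is then immediate: if $h$ is everywhere differentiable then $\partial h(Y_\tau)=\{\nabla h(Y_\tau)\}$, so $(X-Y_\tau)/\tau=\nabla h(Y_\tau)$. The left inequality in~\eqref{eq:estim_slope} is also immediate: by Definition~\ref{def:extended_gradient}, $\overline\nabla h(Y_\tau)$ is the element of $\partial h(Y_\tau)$ of minimal norm, and $(X-Y_\tau)/\tau$ is one of its elements, whence $|\overline\nabla h(Y_\tau)|\le|(X-Y_\tau)/\tau|$.

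For the right inequality in~\eqref{eq:estim_slope} I would combine the subgradient inequality with the slope formula recalled just above Proposition~\ref{prop:lsc_slope}. Set $v:=(X-Y_\tau)/\tau\in\partial h(Y_\tau)$. Testing the subgradient inequality at $Y_\tau$ against $X$ gives $h(X)\ge h(Y_\tau)+v\cdot(X-Y_\tau)=h(Y_\tau)+\tau|v|^2$; in particular $h(Y_\tau)\le h(X)$ (this also follows directly from minimality with competitor $X$), so $h(X)-h(Y_\tau)=(h(X)-h(Y_\tau))_+\le|\overline\nabla h(X)|\,|X-Y_\tau|=\tau|v|\,|\overline\nabla h(X)|$ by the slope formula. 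Chaining the two displays yields $\tau|v|^2\le\tau|v|\,|\overline\nabla h(X)|$, hence $|v|\le|\overline\nabla h(X)|$ (trivially if $v=0$), which is exactly the right inequality.

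For assertion (3) I would first invoke the classical fact that pointwise convergence of finite-valued convex functions on $\R^p$ towards a finite-valued convex function is automatically locally uniform, with locally uniformly bounded Lipschitz constants (a strengthening of the statement recalled before Proposition~\ref{prop:lsc_slope}; see \cite{stromberg1996operation,ambrosio2008gradient} for this and related facts). Write $Y_\eps:=J_{\tau,h_\eps}(X)$. Choosing $g_\eps\in\partial h_\eps(X)$ with $|g_\eps|\le L_0$ uniformly in $\eps$ (uniform local Lipschitz bound at $X$), the minimality inequality $h_\eps(Y_\eps)+|Y_\eps-X|^2/(2\tau)\le h_\eps(X)$ together with $h_\eps(Y_\eps)\ge h_\eps(X)+g_\eps\cdot(Y_\eps-X)\ge h_\eps(X)-L_0|Y_\eps-X|$ gives $|Y_\eps-X|\le 2\tau L_0$, so $(Y_\eps)_{\eps>0}$ is bounded. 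Along any subsequence with $Y_\eps\to\bar Y$, passing to the limit in
\begin{equation*}
h_\eps(Y_\eps)+\frac{|Y_\eps-X|^2}{2\tau}\le h_\eps(Z)+\frac{|Z-X|^2}{2\tau},\qquad Z\in\R^p,
\end{equation*}
using local uniform convergence on the left (so $h_\eps(Y_\eps)\to h(\bar Y)$) and pointwise convergence on the right, shows that $\bar Y$ minimizes $h(\cdot)+|\cdot-X|^2/(2\tau)$, hence $\bar Y=J_{\tau,h}(X)$ by uniqueness. Since the limit does not depend on the extracted subsequence and $(Y_\eps)$ is bounded, the whole family converges, giving~\eqref{eq:convergence_resolvent}. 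The only inputs that are not pure bookkeeping are the slope formula and the subgradient inequality in part~(1) — a naive use of minimality alone would cost a factor $2$ in the right inequality — and the automatic local uniformity of pointwise-convergent convex functions in part~(3); these are the points I would expect to require the most care.
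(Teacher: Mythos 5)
Your proof is correct. Note that the paper itself states Proposition~\ref{prop:resovent} without proof, calling it standard and pointing to the references cited just above it, so there is no in-paper argument to compare against; your write-up is exactly the standard one: the subdifferential optimality condition $(X-J_{\tau,h}(X))/\tau\in\partial h(J_{\tau,h}(X))$ gives point~(2) and the left inequality of~\eqref{eq:estim_slope}, the subgradient inequality combined with the slope formula recalled before Proposition~\ref{prop:lsc_slope} gives the right inequality without losing the factor~$2$, and Mosco/locally-uniform convergence of pointwise-convergent convex functions plus uniqueness of the minimizer gives~(3). The only cosmetic imprecision is that the uniform subgradient bound $|g_\eps|\le L_0$ need only hold for $\eps$ small enough (local equi-Lipschitz bounds are guaranteed only along the convergent tail of the family), which is all the limit $\eps\to 0$ requires.
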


We are now ready for the proof of Lemma~\ref{lem:Gamma_cv_K}.
\begin{proof}[Proof of Lemma~\ref{lem:Gamma_cv_K}]
	\noindent\underline{Proof of the $\Gamma-\liminf$}.	It is straightforward using Fatou's lemma, Proposition~\ref{prop:lsc_slope} and the lower semi-continuity of $\YY \mapsto \int_{\theta_0}^{\theta_1} |\dot Y_\theta|^2\D \theta$ with respect to the topology of uniform convergence.
	
	\noindent\underline{Proof of the $\Gamma-\limsup$}. Let us consider a curve $\YY \in H^1([\theta_0, \theta_1]; \R^p)$ with $Y_{\theta_0} = R$ and $Y_{\theta_1}=S$ (else there is nothing to prove). For all $\eps>0$ and $\tau>0$, we define:
	\begin{equation*}
	\YY^{\tau, \eps}: \theta \mapsto J_{\tau, g_\eps(\theta,\bullet)}(Y_\theta),
	\end{equation*}
	and correspondingly:
	\begin{equation*}
	\YY^{\tau}: \theta \mapsto J_{\tau, g(\theta,\bullet)}(Y_\theta).
	\end{equation*}
	First, we prove:
	\begin{equation}
	\label{eq:first_estim_gamma_limsup}
	\limsup_{\tau \to 0} \limsup_{\eps \to 0} \frac{1}{2} \int_{\theta_0}^{\theta_1} \left\{ |\dot Y^{\tau,\eps}_\theta|^2 + |\nabla g_\eps(\theta , Y^{\tau, \eps}_\theta) |^2 \right\} \eta(\theta) \D \theta \leq \frac{1}{2} \int_{\theta_0}^{\theta_1} \left\{ |\dot Y_\theta|^2 + |\overline\nabla g(\theta , Y_\theta) |^2 \right\} \eta(\theta) \D \theta.
	\end{equation}
	We will then choose $\tau$ as a function of $\eps$ and show how to fix the endpoints.
	
	\noindent\underline{Proof of \eqref{eq:first_estim_gamma_limsup}}. By the second point of Proposition~\ref{prop:resovent}, for all $\eps,\tau, \theta$, we have:
	\begin{equation*}
	Y_\theta = Y^{\tau, \eps}_\theta + \tau \nabla g_\eps(\theta, Y^{\tau, \eps}_\theta).
	\end{equation*}
	Using the smoothness and convexity of $g_\eps$, and $\YY \in H^1$, we easily deduce that $\YY^{\tau, \eps}$ is in $H^1$ and that for almost all $\theta$,
	\begin{equation*}
	\dot Y_\theta = \Big(\mathbb{I} + \tau \mathrm{D}^2g_\eps(\theta, Y^{\tau, \eps}_\theta)\Big) \cdot \dot Y^{\tau, \eps}_\theta + \tau \partial_\theta \nabla g_\eps(\theta, Y^{\tau, \eps}_\theta).
	\end{equation*}
	By convexity of $g_\eps$, we have $\mathbb{I} \leq \mathbb{I} + \tau \mathrm{D}^2 g_\eps$ in the sense of symmetric matrices, and hence:
	\begin{equation}
	\label{eq:estim_velocity}
	| \dot Y^{\tau, \eps}_\theta| \leq |\dot Y_\theta - \tau \partial_\theta \nabla g_\eps (\theta, Y^{\tau,\eps}_\theta)| \leq |\dot Y_\theta| + \tau M.
	\end{equation}
	Recall that $M$ was defined in the uniform integrability assumption~\eqref{eq:small_dt_nabla} on $\partial_\theta \nabla g_\eps$. (In the case when $\partial_\theta \nabla g_\eps = 0$, we recover the known fact that for $h$ independent of time, $J_{\tau,h}$ is contractive.) Then, we deduce:
	\begin{align*}
	\limsup_{\eps \to 0}\frac{1}{2} \int_{\theta_0}^{\theta_1} &\left\{ |\dot Y^{\tau,\eps}_\theta|^2 + |\nabla g_\eps(\theta , Y^{\tau, \eps}_\theta) |^2 \right\} \eta(\theta) \D \theta 
	\\&\overset{\eqref{eq:estim_slope}\eqref{eq:estim_velocity}}{\leq} \limsup_{\eps \to 0}\frac{1}{2} \int_{\theta_0}^{\theta_1} \left\{ \bigg( |\dot Y_\theta| + \tau M \bigg)^2 + \left|\frac{Y_\theta - Y^{\tau, \eps}_\theta}{\tau} \right|^2 \right\} \eta(\theta) \D \theta\\
	&\hspace{5pt}\overset{\eqref{eq:convergence_resolvent}}{\leq} \frac{1}{2}\int_{\theta_0}^{\theta_1}\left\{\Big( |\dot Y_\theta| + \tau M \Big)^2 + \left|\frac{Y_\theta - Y^{\tau}_\theta}{\tau} \right|^2 \right\}\eta(\theta)\D \theta \\
	&\hspace{5pt}\overset{\eqref{eq:estim_slope}}{\leq}\frac{1}{2}\int_{\theta_0}^{\theta_1}\left\{ \Big( |\dot Y_\theta| + \tau M \Big)^2 + \left|\overline\nabla g(\theta, Y_\theta) \right|^2 \right\}\eta(\theta)\D \theta.
	\end{align*}
 Formula~\eqref{eq:first_estim_gamma_limsup} follows.
	
	\noindent\underline{Choice of $\tau = \tau(\eps)$}. Because of~\eqref{eq:first_estim_gamma_limsup}, and because:
	\begin{equation*}
	\forall \eps>0, \qquad Y^{\tau, \eps}_{\theta_0} \underset{\tau \to 0}{\longrightarrow} R \qquad \mbox{and}\qquad Y^{\tau, \eps}_{\theta_1} \underset{\tau \to 0}{\longrightarrow} S,
	\end{equation*}
	it is possible to find a non-increasing function $\tau = \tau(\eps)$ converging sufficiently slowly to $0$ so that:
	\begin{gather}
	\label{eq:second_estil_Gamma_limsup}\limsup_{\eps \to 0} \frac{1}{2} \int_{\theta_0}^{\theta_1} \left\{ |\dot Y^{\tau(\eps),\eps}_\theta|^2 + |\nabla g_\eps(\theta , Y^{\tau(\eps), \eps}_\theta) |^2 \right\} \eta(\theta) \D \theta \leq \frac{1}{2} \int_{\theta_0}^{\theta_1} \left\{ |\dot Y_\theta|^2 + |\overline\nabla g(\theta , Y_\theta) |^2 \right\} \eta(\theta) \D \theta,\\
	\label{eq:convergence_endpoints} Y^{\tau(\eps), \eps}_{\theta_0} \underset{\eps \to 0}{\longrightarrow} R \qquad \mbox{and}\qquad Y^{\tau(\eps), \eps}_{\theta_1} \underset{\tau \to 0}{\longrightarrow} S.
	\end{gather}
	
	\noindent \underline{Fixing the endpoints}. For fixed $\eps$ and small $\delta>0$, we will define $\ZZ^{\delta,\eps}$ as a slight modification of the curve $\YY^{\eps, \tau(\eps)}$ in such a way that $\ZZ^{\delta,\eps}$ joins $R$ to $S$. For this, we just set for $\theta \in [\theta_0, \theta_1]$:
	\begin{equation*}
	Z^{\delta,\eps}_\theta = \left\{\begin{aligned}
	& R + \frac{\theta - \theta_0}{\delta} \Big( Y^{\tau(\eps), \eps}_{\delta_0 + \delta} - R \Big), && \mbox{if }\theta \in [\theta_0, \theta_0 + \delta],\\
	&Y^{\tau(\eps), \eps}_\theta, &&\mbox{if } \theta \in [\theta_0 + \delta, \theta_1 - \delta],\\
	&S + \frac{\theta_1 - \theta}{\delta} \Big( Y^{\tau(\eps), \eps}_{\delta_1 - \delta} - S \Big), && \mbox{if }\theta \in [\theta_1 - \delta, \theta_1 ],
	\end{aligned}
	\right.
	\end{equation*}
	A quick computation shows:
	\begin{equation}
	\label{eq:third_estim_Gamma_limsup}
	\begin{aligned}
	&\frac{1}{2} \int_{\theta_0}^{\theta_1} \left\{ |\dot Z^{\delta,\eps}_\theta|^2 + |\nabla g_\eps(\theta , Z^{\delta, \eps}_\theta) |^2 \right\} \eta(\theta) \D \theta \\
	\leq \frac{1}{2}& \int_{\theta_0}^{\theta_1} \left\{ |\dot Y^{\tau(\eps),\eps}_\theta|^2 + |\nabla g_\eps(\theta , Y^{\tau(\eps), \eps}_\theta) |^2 \right\} \eta(\theta) \D \theta +  \| \eta \|_{\infty} \left(\frac{|Y^{\tau(\eps), \eps}_{\theta_0 + \delta} - R|^2}{2\delta} +\frac{|Y^{\tau(\eps), \eps}_{\theta_1 - \delta} - S|^2}{2\delta} + \delta L^2\right),
	\end{aligned}
	\end{equation}
	where $L$ is defined in the uniform Lipschitz assumption~\eqref{eq:uniform_Lip} for $g_\eps$. 
	
Let us estimate $|Y^{\tau(\eps), \eps}_{\theta_0 + \delta} - R|^2/2\delta$. We have:
\begin{equation*}
\frac{|Y^{\tau(\eps), \eps}_{\theta_0 + \delta} - R|^2}{2\delta} \leq \frac{|Y^{\tau(\eps), \eps}_{\theta_0 } - R|^2}{\delta} + \frac{|Y^{\tau(\eps), \eps}_{\theta_0 + \delta} - Y^{\tau(\eps), \eps}_{\theta_0 }|^2}{\delta} \leq \frac{|Y^{\tau(\eps), \eps}_{\theta_0 } - R|^2}{\delta} + \int_{\theta_0}^{\theta_0 + \delta} |\dot Y^{\tau(\eps),\eps}_\theta|^2 \D \theta.
\end{equation*}
Because of \eqref{eq:estim_velocity}, \eqref{eq:small_dt_nabla} and $\YY \in H^1$, the integral $\int_{\theta_0}^{\theta_0 + \delta} |\dot Y^{\tau(\eps),\eps}_\theta|^2 \D \theta \to 0$ as $\delta \to 0$, uniformly in $\eps$: we bound it by a function $v_i = v_i(\delta)$ tending to $0$ as $\delta \to 0$. In the same way,
\begin{equation*}
\frac{|Y^{\tau(\eps), \eps}_{\theta_1 - \delta} - S|^2}{2\delta} \leq \frac{|Y^{\tau(\eps), \eps}_{\theta_1 } - S|^2}{\delta} + v_f(\delta),
\end{equation*}
where $v_f(\delta) \to 0$ as $\delta \to 0$.

Plugging these bounds into~\eqref{eq:third_estim_Gamma_limsup}, we get:
\begin{equation*}
\begin{aligned}
\frac{1}{2} \int_{\theta_0}^{\theta_1} &\left\{ |\dot Z^{\delta,\eps}_\theta|^2 + |\nabla g_\eps(\theta , Z^{\delta, \eps}_\theta) |^2 \right\} \eta(\theta) \D \theta \\
	&\leq \frac{1}{2} \int_{\theta_0}^{\theta_1} \left\{ |\dot Y^{\tau(\eps),\eps}_\theta|^2 + |\nabla g_\eps(\theta , Y^{\tau(\eps), \eps}_\theta) |^2 \right\} \eta(\theta) \D \theta +  \| \eta \|_{\infty} \left(\frac{u(\eps)}{\delta}+ v(\delta)\right),
	\end{aligned}
\end{equation*}
where $u(\eps) := |Y^{\tau(\eps), \eps}_{\theta_0 } - R|^2 + |Y^{\tau(\eps), \eps}_{\theta_1 - \delta} - S|^2 \to 0$ as $\eps \to 0$ by~\eqref{eq:convergence_endpoints}, and $v(\delta) := v_i(\delta) + v_f(\delta) + \delta L^2 \to 0$ as $\delta \to 0$. Hence, choosing $\delta(\eps) := \sqrt{u(\eps)}$, we find with the help of~\eqref{eq:second_estil_Gamma_limsup} that $\ZZ^{\delta(\eps), \eps}$ is a recovery sequence for the $\Gamma-\limsup$ of $K_\eps$ towards $K$. 
\end{proof}

\subsection{Proof of Lemma~\ref{lem:estim_g_eps}}
The proof is straightforward, and relies on explicit computations.
\begin{proof}[Proof of Lemma~\ref{lem:estim_g_eps}]
Let us define for $X \in (\R^d)^N$:
\begin{equation}
\label{eq:def_h_gamma_cv}
h(X) := \log \left[ \frac{1}{N!} \sum_{\sigma \in \perm} \exp ( X \cdot A^\sigma)\right].
\end{equation}
For $\eps >0$, $\theta \in [\theta_0, \theta_1]$ and $Y \in (\R^d)^N$, we have by definition of $f_\eps$ and $g_\eps$ (formulas~\eqref{eq:def_f_eps} and~\eqref{eq:def_g_eps} respectively):
\begin{equation}
\label{eq:link_g_eps_h}
g_\eps(\theta, Y) = \eps h\left( \frac{Y}{\eps \exp(\theta)} \right).
\end{equation}

\noindent\underline{Proof of~\ref{H:zero_at_zero}}. It is obvious.

\noindent\underline{Proof of~\ref{H:convex_eps}}. By~\eqref{eq:link_g_eps_h}, it suffices to check that $h$ is convex. Differentiating twice~\eqref{eq:def_h_gamma_cv}, we get for all $X \in (\R^d)^N$:
\begin{equation}
\label{eq:D2h}
\mathrm{D}^2 h(X) = \cg A^\sigma \otimes A^\sigma \cd_X - \cg A^\sigma \cd_X \otimes \cg A^\sigma \cd_X = \cg A^\sigma - \cg A^\sigma \cd_X \cd_X \otimes \cg A^\sigma - \cg A^\sigma \cd_X \cd_X,
\end{equation}
where if $a$ is a function of $\sigma$, $\cg a(\sigma) \cd_X$ stands for:
\begin{equation*}
\cg a(\sigma) \cd_X := \frac{\displaystyle{\sum_{\sigma \in \perm}} a(\sigma) \exp(X\cdot A^\sigma)}{\displaystyle{\sum_{\sigma \in \perm}} \exp(X\cdot A^\sigma)}.
\end{equation*}
It follows that $\mathrm{D}^2 h(X)$ is a nonnegative symmetric matrix.

\noindent \underline{Proof of \ref{H:convex}}. By the definitions~\eqref{eq:deff} of $f$ and~\eqref{eq:def_g_eps} of $g$, we have for all $\theta \in [\theta_0, \theta_1]$ and $Y \in (\R^d)^N$:
\begin{equation*}
g(\theta, Y) = \frac{f(Y)}{\exp(\theta)}.
\end{equation*}
The convexity is obvious, let us check~\eqref{eq:IPP_g}. Let us consider $\YY \in H^1([\theta_0, \theta_1]; (\R^d)^N)$. The function $g$ is clearly locally Lipschitz in both $\theta$ and $Y$. As a consequence, the map $G: \theta \mapsto g(\theta, Y_\theta)$ is also $H^1$. Let us take $\theta \in (\theta_0, \theta_1)$ a point where both $\YY$ and $G$ are differentiable (this happens for almost every $\theta$). We have:
\begin{align*}
G'(\theta) &= \lim_{\delta \downarrow 0} \frac{1}{\delta} \left\{ \frac{f(Y_{\theta + \delta})}{\exp(\theta + \delta)} - \frac{f(Y_\theta)}{\exp(\theta)} \right\} = -\frac{f(Y_\theta)}{\exp(\theta)} + \frac{1}{\exp(\theta)}\lim_{\delta \downarrow 0} \frac{f(Y_{\theta + \delta}) - f(Y_\theta)}{\delta} \\
&\geq -\frac{f(Y_\theta)}{\exp(\theta)} + \frac{1}{\exp(\theta)}\limsup_{\delta \downarrow 0} \overline\nabla f(Y_\theta) \cdot \frac{Y_{\theta + \delta} - Y_\theta}{\delta}\\
&= \partial_\theta g(\theta, Y_\theta) + \frac{\overline\nabla f(Y_\theta)}{\exp(\theta)} \cdot \dot Y_\theta = \partial_\theta g(\theta, Y_\theta) + \overline\nabla g(\theta, Y_\theta) \cdot \dot Y_\theta,
\end{align*}
where we used $f(Y_{\theta + \delta}) \geq f(Y_\theta) + \overline\nabla f(Y_\theta)\cdot (Y_{\theta + \delta} - Y_\theta)$ to get the second line. In the same way, we have:
\begin{equation*}
G'(\theta) = \lim_{\delta \downarrow 0} \frac{1}{\delta} \left\{ \frac{f(Y_{\theta})}{\exp(\theta )} - \frac{f(Y_{\theta-\delta})}{\exp(\theta - \delta)} \right\} \leq \partial_\theta g(\theta, Y_\theta) + \overline\nabla g(\theta, Y_\theta) \cdot \dot Y_\theta.
\end{equation*}
The result follows from gathering these two inequalities. 

\noindent \underline{Proof of \ref{H:lip}}. In view of~\eqref{eq:link_g_eps_h} and as $\theta_0> - \infty$, it suffices to check that $\nabla h$ is bounded. Differentiating~\eqref{eq:def_h_gamma_cv} at $X \in (\R^d)^N$ leads to:
\begin{equation*}
\nabla h(X) = \cg A^\sigma \cd_X,
\end{equation*}
which is clearly bounded by $|A|$.

\noindent \underline{Proof of \ref{H:small_dt_nabla}}. Using~\eqref{eq:link_g_eps_h}, we get for all $\eps>0$, $\theta \in [\theta_0, \theta_1]$ and $Y \in (\R^d)^N$:
\begin{equation*}
\partial_\theta \nabla g_\eps(\theta, Y) = -\frac{1}{\exp(\theta)}\left( \nabla h\left( \frac{Y}{\eps \exp(\theta)}\right) + \mathrm{D}^2h \left( \frac{Y}{\eps \exp(\theta)}\right) \cdot \frac{Y}{\eps \exp(\theta)} \right).
\end{equation*}
As we already saw in~\ref{H:lip} that $\nabla h$ is bounded, it suffices to prove that $X \mapsto \mathrm{D}^2h(X) \cdot X$ is bounded. Let us expand everything in~\eqref{eq:D2h} and apply $X$ to the right. We get:
\begin{equation*}
\mathrm{D}^2 h(X) \cdot X = \frac{\displaystyle{\sum_{\sigma,\eta \in \perm}}X \cdot (A^\sigma - A^\eta) A^\sigma \exp\Big(X \cdot (A^\sigma + A^\eta)\Big)}{\displaystyle{\sum_{\sigma,\eta \in \perm}} \exp\Big(X \cdot (A^\sigma + A^\eta)\Big)}.
\end{equation*}
As a consequence, it suffices to show that for each $\sigma,\eta \in \perm$,
\begin{equation*}
T(\sigma, \eta, X) := \frac{X \cdot (A^\sigma - A^\eta)\exp\Big(X \cdot (A^\sigma + A^\eta)\Big)}{\displaystyle{\sum_{\sigma',\eta' \in \perm}} \exp\Big(X \cdot (A^{\sigma'} + A^{\eta'})\Big)}
\end{equation*} 
is bounded, uniformly in $X$. First, if $\eta = \sigma$, then $T(\sigma, \sigma, X)=0$. Else, let us use the bound:
\begin{equation*}
\sum_{\sigma',\eta' \in \perm} \exp\Big(X \cdot (A^{\sigma'} + A^{\eta'})\Big) \leq \exp\Big( 2 X \cdot A^\sigma \Big)+ \exp\Big( 2 X\cdot A^\eta\Big),
\end{equation*}
obtained by only keeping the terms corresponding to $\sigma'= \eta' = \sigma$ and $\sigma' = \eta' = \eta$ in the sum. This leads to:
\begin{equation*}
|T(\sigma, \eta, X)| \leq \frac{|X \cdot (A^\sigma - A^\eta)|\exp\Big(X \cdot (A^\sigma + A^\eta)\Big)}{\exp\Big( 2 X \cdot A^\sigma \Big)+ \exp\Big( 2 X\cdot A^\eta\Big)} =  \frac{|X \cdot (A^\sigma - A^\eta)|}{\exp\Big( - |X \cdot (A^\sigma - A^\eta)| \Big)+ \exp\Big( |X \cdot (A^\sigma - A^\eta)|\Big)},
\end{equation*}
which is clearly bounded uniformly in $X$. The result follows.
\end{proof}
\section{The case of dimension 1: sticky collisions}
\label{sec:sticky}
In this section, we will study the global minimizers of the functional $\Lambda'$ obtained in Subsection~\ref{subsec:change_of_time}, in dimension $d=1$. If we call $t$ the time variable and if we replace $\theta_0$ and $\theta_1$ by $0$ and $T$ respectively, due to the invariance of the functional through translation in time, $\Lambda'$ reads: 
\begin{equation}
\label{eq:functional_dim1}
\Lambda' ( \ZZ ) = \left\{
\begin{aligned}
&\int_{0}^{T}\left\{ | \dot{Z}_t |^2 + |Z_t - \overline{\nabla} f( Z_t) |^2\right\} \D t , && \mbox{if } \ZZ \in H^1([0, T]; \R^{N}),\\[-10pt]
&&& Z_{0} \in\{ P^\sigma \} \mbox{ and } Z_{T} \in \{ Q^\sigma \} ,\\[5pt]
&+\infty, && \mbox{else,}
\end{aligned}
\right.
\end{equation}
where:
\begin{equation}
\label{eq:def_f_1d}
f(X) = \max_{ \sigma \in \perm} X \cdot A^{\sigma}, \qquad X \in \R^N.
\end{equation}
Here, we chose a \emph{strictly ordered} $A=(a_1,\dots,a_N)$, that is such that $a_1 < \dots <a_N$, $P,Q \in \R^N$ and $T>0$. Once again, when $X=(x_1, \dots, x_N) \in \R^N$ and $\sigma \in \perm$, $X^\sigma := (x_{\sigma(1)}, \dots, x_{\sigma(N)})$, and $\{P^\sigma\}$ and $\{  Q^\sigma\}$ refer to $\{P^\sigma,\, \sigma \in \perm\}$ and $\{Q^\sigma,\, \sigma \in \perm \}$ respectively. Of course $P=(p_1, \dots, p_N)$ and $Q=(q_1,\dots,q_N)$ can be supposed to be ordered, that is $p_1 \leq \dots \leq p_N$ and $q_1 \leq \dots \leq q_N$. We recall that we defined the \emph{extended gradient} $\overline{\nabla} f$ in Definition~\ref{def:extended_gradient}. As already noticed in Subsection~\ref{subsec:main_result}, the existence of global minimizers for $\Lambda'$ follows from the direct method of calculus of variations. Uniqueness does not hold in general, even up to permutations.

The purpose of the section is twofold. On the one hand, we will show that the model has nice regularity properties: any global minimizer of $\Lambda'$ is smooth except on a finite number of "sticking" or separation" times\footnote{Notice that $\Lambda'$ is invariant under time inversion, so that if particles are allowed to stick, they are also allowed to separate.}. On the other hand, we will justify as claimed in Section~\ref{sec:derivation} that $\Lambda'$ describes a model with sticky collisions in the sense that a minimizer $\ZZ= (z_1(t),\dots, z_N(t))$ of $\Lambda'$ will typically exhibit some sticking effects as $z_i(t) = z_j(t)$ for $i \neq j$ on non-trivial intervals. 

To describe the sticking effect, it is convenient to introduce the following definition:

\begin{Def}[Partition of $\llbracket 1,N \rrbracket$]
\label{def:partition}
Let $X \in \R^N$. We say that $X$ is divided according to $\pi(X)$ when $\pi(X)$ is the partition of $\llbracket 1, N \rrbracket$ induced by the relation:
\[
\forall (i,j) \in \llbracket 1, N \rrbracket^2, \quad i \sim j \quad \Leftrightarrow \quad x_i = x_j.
\]
We call $C(X,i)$ the class of $i \in \llbracket 1, N \rrbracket$ in $\pi(X)$.
\end{Def}

The main result of the section is the following result:
\begin{Thm}[Regularity of the optimal trajectories]
\label{thm:traj_reg}
For given $A,P,Q\in \R^N$ and $T>0$ as before, let $\ZZ$ be a global minimizer of $\Lambda'$ defined in \eqref{eq:functional_dim1}. Then $\ZZ$ is continuous and there exist:
\begin{equation*}
0=t_0 < t_1<\dots < t_p = T
\end{equation*}
a family of times such that for each $i = 1,\dots,p$, $\ZZ$ is smooth on $[t_{i-1},t_i]$, and $\pi(\ZZ)$ is constant on $(t_{i-1},t_i)$.
\end{Thm}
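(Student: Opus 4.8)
The plan is to combine a local regularity statement away from ``degenerate'' configurations with a compactness argument showing that the degenerate set of times is finite. Let $\ZZ=(z_1,\dots,z_N)$ be a global minimizer of $\Lambda'$. Since the integrand in \eqref{eq:functional_dim1} is $\geq |\dot Z_t|^2$ and $|\dot Z_t|^2$ is weakly lower semicontinuous while $|Z_t-\overline\nabla f(Z_t)|^2$ is bounded on bounded sets (because $|\overline\nabla f|\le|A|$), a minimizer is automatically Lipschitz, hence continuous. The key structural fact about $f$ in \eqref{eq:def_f_1d} is that, since $A$ is strictly ordered, the set where the max is attained by more than one permutation is exactly the union of the hyperplanes $\{x_i=x_j\}$: for $X$ with all coordinates distinct, the unique optimal $\sigma$ is the one that sorts $X$ in increasing order, so $\nabla f(X)=A^{\sigma}$ is locally constant there, and on $\{x_i=x_j\}$ the extended gradient $\overline\nabla f(X)$ is the average (over the relevant permutations) of the sorted lattice positions within each block $C(X,i)$. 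In particular, on the open region where $\pi(X)$ equals a fixed partition, $\overline\nabla f$ is a \emph{smooth} (indeed affine, locally constant even) function of $X$, so there the Euler--Lagrange equation is the linear ODE $\ddot Z_t = Z_t-\overline\nabla f(Z_t)$, whose solutions are smooth (analytic).

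First I would establish the \emph{local regularity away from collisions}: on any open time interval $I$ where $\pi(Z_t)$ is constant equal to some partition $\mathcal P$, the curve $\ZZ$ restricted to $I$ is smooth. This follows by a standard inner-variation/outer-variation argument: perturbations $Z_t+\delta\,\varphi(t)$ with $\varphi$ compactly supported in $I$ keep $\pi$ constant for small $\delta$ (by openness of the region $\{\pi(\cdot)=\mathcal P\}$ and continuity of $\ZZ$), and there $f$ is $C^\infty$, so the first variation gives $\ddot Z = Z-\overline\nabla f(Z)$ in the distributional sense, hence classically, hence smooth by bootstrapping. Next I would prove the crucial \emph{monotonicity/one-directional} property of $\pi$ along a minimizer at a single time: using that sticking is energetically favourable (the term $|Z-\overline\nabla f(Z)|^2$ is strictly smaller on $\mathcal N$ than the ``de-stuck'' comparison, cf.\ the discussion of $\Lambda'$ vs.\ $\Lambda^+$ in Subsection~\ref{subsec:LAP}), one shows that once two particles are equal on a set of times of positive measure they can be taken equal on a whole interval, and that the partition $\pi(Z_t)$ can only get coarser then finer finitely often. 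Concretely, I would argue that the set $\{t: \pi(Z_t)\neq \pi(Z_s)\text{ for $s$ near }t\}$ of ``transition times'' is closed, and that near each transition time the minimality forces a strict drop in a suitable Lyapunov-type quantity (e.g.\ the energy of an optimal comparison curve built by gluing the linear ODE solutions associated to the two adjacent partitions), which combined with the finiteness of the lattice of partitions of $\llbracket 1,N\rrbracket$ bounds the number of transitions by a constant depending only on $N$.

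I would then assemble these pieces: let $0=t_0<t_1<\dots<t_p=T$ enumerate the (finitely many) transition times together with the endpoints; on each $(t_{i-1},t_i)$ the partition $\pi(Z_t)$ is constant, so by the local regularity step $\ZZ$ is smooth on $[t_{i-1},t_i]$ (smoothness up to the endpoints follows because the linear ODE solution extends analytically past the interval). This is exactly the statement of Theorem~\ref{thm:traj_reg}.

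\textbf{Main obstacle.} The delicate point is the finiteness of the transition times — equivalently, ruling out that a minimizer oscillates infinitely often between ``stuck'' and ``unstuck'' configurations (a Zeno-type phenomenon) and, relatedly, showing that a minimizer that is stuck on a positive-measure set of times is actually stuck on an interval. The first will require a genuine variational comparison: given a candidate minimizer with infinitely many transitions accumulating at some $t^*$, one must construct a competitor — by replacing $\ZZ$ on a small interval around $t^*$ with the solution of the linear ODE for the coarsest partition seen near $t^*$, matching positions (and, if needed, smoothing velocities at the two matching times at a cost $o(1)$) — and show it strictly lowers $\Lambda'$, using the strict inequality $|Z-\overline\nabla f(Z)|^2<\tfrac{|Z|^2+|A|^2}{2}-f(Z)$ on $\mathcal N$ to extract a definite energy gain from each ``unstick'' excursion. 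Controlling the boundary-layer cost of these surgeries uniformly, and handling the case $d=1$'s particular block-averaging formula for $\overline\nabla f$, is where the real work lies; everything else is soft (lower semicontinuity, compactness, linear ODE theory).
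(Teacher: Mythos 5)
Your overall architecture --- smoothness on maximal intervals where $\pi(Z_t)$ is constant, plus finiteness of the set of transition times --- is the same as the paper's, and your local regularity step is essentially right (with one caveat: the set $\{\pi(\cdot)=\mathcal P\}$ is \emph{not} open when $\mathcal P$ has a non-trivial block, so the admissible variations must move all particles of a block by the same amount, i.e.\ stay in $E_{\mathcal P}$; the paper does exactly this, and the resulting ODE $\ddot z_i = z_i - \frac{1}{\#C}\sum_{j\in C}a_j$ is what you wrote). You also omit the reduction to ordered trajectories (Proposition~\ref{prop:ordered}) and the decomposition $|X-\overline\nabla f(X)|^2=|X-A|^2+h(\pi(X))-|A|^2$, both of which are needed to make the block structure and the "energy gain from sticking" quantitative, but these are preliminaries you could supply.

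The genuine gap is in the finiteness of the transition times, which you correctly flag as the hard part but for which your proposed mechanism does not work. Bounding the number of transitions "by a constant depending only on $N$" via "the finiteness of the lattice of partitions of $\llbracket 1,N\rrbracket$" is not available: the partition can a priori revisit the same pair of values infinitely often (a Zeno accumulation), and there is no monotonicity of $\pi(Z_t)$ along a minimizer --- the functional is time-reversal invariant, so separations are just as admissible as stickings, and "coarser then finer finitely often" is not something the dynamics provides. What is needed, and what the paper proves (Proposition~\ref{prop:reg_isolated_shock}), is a \emph{uniform quantitative} estimate at each isolated shock: comparing $\Lambda''$ with a competitor that glues the two leftmost sub-blocks $C_1,C_2$ of the shocked class together for a time $\lambda\sigma$ at their common momentum $p$, one gains at least $\delta\lambda\sigma$ of internal energy $h$ (where $\delta>0$ is the minimal gap of $h$ between an ordered partition and a strict refinement) at a kinetic cost $\frac{k_1k_2}{k_1+k_2}(v_2-v_1)^2\frac{\lambda}{1-\lambda}\sigma+o(\sigma)$; minimality forces $\frac{k_1k_2}{k_1+k_2}(v_2-v_1)^2\ge\delta$, and local conservation of momentum across the shock converts this into a velocity jump $\dot z_i(t-)-\dot z_i(t+)\ge\alpha(N,A)>0$ for the leftmost particle $i=\min C$. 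Since energy conservation bounds all velocities uniformly, a single particle can absorb only finitely many such jumps in finite time, which rules out accumulation of shocks (after first checking, via a minimal-$\#C$ argument, that shocks near an accumulation point are isolated so that the estimate applies). Your competitor ("replace $\ZZ$ near $t^*$ by the ODE solution for the coarsest partition") is in the right spirit, but without the lower bound $\alpha$ on the velocity jump per shock and the conservation laws that turn it into a contradiction, the argument does not close.
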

It will be quite clear from the proof that sticking effects do occur. This exactly means that there exist trajectories $\ZZ$ for which with the notations of Subsection \ref{subsec:LAP}, $\Lambda'(\ZZ) < \Lambda^+(\ZZ)$. For such trajectories, $Z_t$ is located on the set where $f$ is not differentiable for a set of times of positive Lebesgue measure. But in dimension $1$, this set is exactly the set where at least two particles are located at the same place. Otherwise stated, the set of times when $\pi(\ZZ) \neq \{\{1\},\dots,\{N\}\}$ is typically of positive Lebesgue measure. As a consequence of Theorem \ref{thm:traj_reg}, it is even a finite union of intervals.

Still it might be convenient to illustrate the sticking effects included in the model by the following easy proposition. It asserts that the set of times when all the particles are stuck is an interval: if all the particles are stuck at two different times, the cheapest behaviour between these two times is to remain stuck. It also shows that this phenomenon occurs: if all the particles are sufficiently close at the initial and final time, then they necessarily stick together during a non-trivial interval along the evolution. 

\begin{Prop}[Intervals of full degeneration]
\label{prop:full_degenerescence}
\begin{enumerate}
\item For given $A,P,Q\in \R^N$ and $T>0$ as before, let $\ZZ = (z_1(t),\dots,z_N(t))$ be a global minimizer of $\Lambda'$. Suppose there exist two times $0\leq t_1 < t_2 \leq T$ such that:
\begin{equation*}
z_1(t_1) = \dots =z_N(t_1) \quad \mbox{and} \quad  z_1(t_1) = \dots =z_N(t_2).
\end{equation*}
Then for all $t \in [t_1,t_2]$, $z_1(t) = \dots = z_N(t)$.
\item For given $A \in \R^N$ and $T>0$ as before, the set $\mathcal{U}$ of endpoints $P,Q \in \R^N$ with the property that for all minimizer $\ZZ = (z_1(t),\dots ,z_N(t))$ of $\Lambda'$, the set of times:
\begin{equation*}
\big\{ t \in [0,T] \, \big| \, z_1(t) = \dots = z_N(t) \big\}
\end{equation*}
is a non-trivial interval, is a neighbourhood of $\{ P,Q \in \R^N \, | \, p_1 = \dots = p_N \mbox{ and } q_1 = \dots =q_N  \}$.
\end{enumerate}
\end{Prop}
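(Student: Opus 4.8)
The plan rests on the orthogonal splitting $\R^N = D\oplus D^\perp$ with $D := \R\mathbf{1}$ the diagonal, $\mathbf{1} := (1,\dots,1)$, together with two elementary facts about $f$ from~\eqref{eq:def_f_1d}. First, $\partial f(X)=\mathrm{conv}\{A^\sigma : X\cdot A^\sigma = f(X)\}$ and every $A^\sigma$ has coordinate sum $\sum_k a_k$, hence so does every element of $\partial f(X)$; writing $\bar a := \tfrac1N\sum_k a_k$, the projection of $\overline\nabla f(X)$ onto $D$ is therefore always $\bar a\mathbf{1}$, and $|(\overline\nabla f(X))^\perp|\leq|A^\perp|$ for all $X$ (the vertices $A^\sigma$, and hence $\mathrm{conv}\{A^\sigma\}\supseteq\partial f(X)$, all lie at distance $|A^\perp|$ from $\bar a\mathbf 1$). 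Second, $\overline\nabla f(X)=\bar a\mathbf 1$ exactly when $X\in D$: on $D$ all permutations are optimal, so $\partial f(X)=\mathrm{conv}\{A^\sigma : \sigma\in\perm\}$, whose minimal norm element is its $\perm$-invariant barycenter $\bar a\mathbf 1$; whereas if $X\notin D$ all optimal rearrangements send the block of $n_1<N$ smallest coordinates of $X$ onto $\{a_1,\dots,a_{n_1}\}$, of average $<\bar a$ since $a_1<\dots<a_N$. As $\overline\nabla f$ takes finitely many values (it depends on $X$ only through the ordered partition of $\{1,\dots,N\}$ carried by $X$'s coordinates), $\mu := \min\{|\overline\nabla f(X)-\bar a\mathbf 1|^2 : X\notin D\}$ is a positive number. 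Decomposing $X=\bar x\mathbf 1 + X^\perp$, one obtains for every $\ZZ\in H^1([0,T];\R^N)$ the identity
\[
\Lambda'(\ZZ) = N\!\int_0^T\!\big(\dot{\bar z}_t^2 + (\bar z_t-\bar a)^2\big)\,\D t + \int_0^T\!|\dot Z^\perp_t|^2\,\D t + \int_0^T\! g(Z_t)\,\D t ,
\]
where $g(Z):=|Z^\perp-(\overline\nabla f(Z))^\perp|^2\geq 0$ vanishes on $D$ and satisfies $g(Z)\geq(\sqrt\mu-|Z^\perp|)^2$ for $Z\notin D$.

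For the first assertion, let $\ZZ$ be a global minimizer with $Z_{t_1},Z_{t_2}\in D$, so $Z^\perp_{t_1}=Z^\perp_{t_2}=0$. Let $\tilde\ZZ$ equal $\ZZ$ outside $(t_1,t_2)$ and $\bar z_t\mathbf 1$ on $[t_1,t_2]$: it lies in $H^1$, matches $\ZZ$ at $t_1,t_2$, and is admissible for $\Lambda'$ (if $t_1=0$ then $Z_0\in D\cap\{P^\sigma\}$ forces $P\in D$ and $\tilde Z_0=\bar z_0\mathbf 1=Z_0$, likewise at $T$). On $[t_1,t_2]$ one has $\tilde Z_t\in D$ and $\bar{\tilde z}_t=\bar z_t$, so the identity gives $\Lambda'(\tilde\ZZ)=\Lambda'(\ZZ)-\int_{t_1}^{t_2}(|\dot Z^\perp_t|^2+g(Z_t))\,\D t$. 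Minimality forces $\int_{t_1}^{t_2}|\dot Z^\perp_t|^2\,\D t=0$, and with $Z^\perp_{t_1}=0$ this yields $Z^\perp\equiv 0$, i.e.\ $z_1(t)=\dots=z_N(t)$, on $[t_1,t_2]$.

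For the second assertion, set $\delta(P,Q):=|P^\perp|+|Q^\perp|$, which vanishes exactly on $\{p_1=\dots=p_N,\ q_1=\dots=q_N\}$; it suffices to show the set $S(\ZZ):=\{t:z_1(t)=\dots=z_N(t)\}$ (equivalently $\{t:Z_t\in D\}$) is a non-trivial interval for every minimizer $\ZZ$ once $\delta(P,Q)$ is small. Keeping the first term of the identity and using $\bar z_0=\bar p$, $\bar z_T=\bar q$, one gets $\Lambda'(\ZZ)\geq m_0(\bar p,\bar q)$, where $m_0$ is the value of the strictly convex, explicitly solvable $1$D problem $N\!\int_0^T(\dot c^2+(c-\bar a)^2)$, $c(0)=\bar p$, $c(T)=\bar q$; $m_0$ and the Lipschitz constant of its minimizer $\bar z^*$ depend continuously on $(\bar p,\bar q)$. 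For the matching upper bound I would use the competitor $\mathcal W$ equal to $\bar z^*(t)\mathbf 1$ on $[s,T-s]$ and interpolating affinely between the endpoint and a diagonal point on each layer $[0,s]$, $[T-s,T]$; since $g(\mathcal W_t)=O(1)$ on each layer (using $|(\overline\nabla f(X))^\perp|\leq|A^\perp|$) while the kinetic contribution there is $O(s)+O(\delta^2/s)$, a direct estimate gives $\Lambda'(\mathcal W)\leq m_0(\bar p,\bar q)+C(s+\delta^2/s)$, so $s=\delta$ yields $\Lambda'(\ZZ)\leq m_0(\bar p,\bar q)+C\delta$ with $C$ locally uniform. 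Hence $E(\ZZ):=\int_0^T|\dot Z^\perp_t|^2\,\D t+\int_0^T g(Z_t)\,\D t\leq\Lambda'(\ZZ)-m_0(\bar p,\bar q)\leq C\delta$. From $\int_0^T|\dot Z^\perp|^2\leq C\delta$ and $|Z^\perp_0|\leq\delta$, Cauchy--Schwarz gives $\|Z^\perp\|_{C^0([0,T])}\leq\delta+\sqrt{TC\delta}\leq\sqrt\mu/2$ for $\delta$ small; then $g(Z_t)\geq(\sqrt\mu-\|Z^\perp\|_{C^0})^2\geq\mu/4$ for every $t\notin S(\ZZ)$, whence $\tfrac\mu4\,|[0,T]\setminus S(\ZZ)|\leq\int_0^T g(Z_t)\,\D t\leq C\delta$, i.e.\ $|S(\ZZ)|\geq T-4C\delta/\mu>0$. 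Finally $S(\ZZ)$ is closed (continuity of $\ZZ$) and, by the first assertion, contains $[t_1,t_2]$ whenever it contains $t_1<t_2$; a closed set with these properties and positive measure is a non-trivial interval, which concludes.

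The main obstacle is the upper bound $\Lambda'(\ZZ)\leq m_0(\bar p,\bar q)+o(1)$: one cannot simply deform a diagonal curve by moving its endpoints to $P,Q$ along straight lines over all of $[0,T]$, because remaining off $D$ in a ``non-equilibrium'' direction costs potential energy at rate at least $\mu/4$ (this is the lower bound on $g$ away from $D$), so such a competitor would overshoot $m_0$ by a term of order $1$ rather than $o(1)$. The competitor must therefore sit on the diagonal for all but a thin layer of width $s\to 0$ and absorb the boundary data there, which forces the $s+\delta^2/s$ trade-off. Everything else --- the orthogonal decomposition and the identity, the admissibility checks, the continuity of $m_0$, and the finiteness of the range of $\overline\nabla f$ --- is routine.
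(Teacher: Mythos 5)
Your proof is correct, and while Point~1 is essentially the paper's argument (projecting onto the diagonal $E_{\llbracket 1,N\rrbracket}$ and using that this projection commutes with $\overline\nabla f$ and contracts both terms of the action --- you phrase it through the explicit splitting $\R^N = D\oplus D^\perp$ and only project the middle portion, which spares the reduction to $t_1=0$, $t_2=T$), Point~2 takes a genuinely different route. The paper argues softly: it introduces two value functions, the true minimum $\overline{\Lambda'}(P,Q)$ and the restricted infimum $\widetilde{\Lambda'}(P,Q)$ over curves spending negligible time on the diagonal, proves both are continuous in $(P,Q)$, observes that $\{\overline{\Lambda'}<\widetilde{\Lambda'}\}$ is open and contained in $\mathcal U$, and establishes a non-quantified gap $a>0$ only at diagonal endpoints. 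You instead prove a quantitative two-sided estimate on minimizers for \emph{all} nearby endpoints: the exact splitting of $\Lambda'$ into the $1$D diagonal problem plus the transverse energy $E(\ZZ)$, a thin-layer competitor giving $\Lambda'(\ZZ)\le m_0(\bar p,\bar q)+C\delta$, hence $E(\ZZ)\le C\delta$, a sup bound on $Z^\perp$, and then the measure bound $|[0,T]\setminus S(\ZZ)|\le 4C\delta/\mu$. Both proofs rest on the same two pillars --- the diagonal/transverse decomposition and the fact that the transverse component of $\overline\nabla f$ takes finitely many values, all bounded away from $0$ off the diagonal (your $\mu$ is the paper's distance from $0$ to the finite set $\mathcal G$) --- and both finish by combining positive measure of $S(\ZZ)$ with Point~1 and closedness. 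What your version buys is an explicit lower bound $|S(\ZZ)|\ge T-4C\delta/\mu$ on the length of the sticking interval, at the cost of the layer-competitor bookkeeping; the paper's version is shorter once the continuity of the value functions is granted. Two cosmetic remarks: the inequality $g(Z)\ge(\sqrt\mu-|Z^\perp|)^2$ is only valid when $|Z^\perp|\le\sqrt\mu$ (otherwise the right-hand side can exceed $g$), but you only invoke it in the regime $\|Z^\perp\|_{C^0}\le\sqrt\mu/2$ where it holds; and the constant $C$ in the upper bound must be (and is, as you note) locally uniform in $(P,Q)$, since the diagonal set of endpoints is unbounded.
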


The proof of Proposition \ref{prop:full_degenerescence} uses almost nothing and is given in Subsection~\ref{subsec:proof_sticking}. Except for that, the whole section is dedicated to the proof of Theorem \ref{thm:traj_reg}. For this we take once for all $A,P,Q \in \R^N$ and $T>0$, $A$ being strictly ordered and $P,Q$ being ordered. 

Even if all the arguments are elementary, we will need a certain number of steps, including the explicit computation of the potential $|X - \overline \nabla f(X)|^2$ (Subsection~\ref{subsec:lemma_gradient} and \ref{subsec:decomposition_potential}) and the justification of \emph{a priori} knowledge on the optimal trajectories: they can be supposed to be ordered at all time (Subsection~\ref{subsec:order}), and the conservation of energy and momentum holds during shocks\footnote{We say that $\ZZ$ presents a shock at time $t$ if $t$ is a discontinuity point of $\pi(\ZZ)$, see Definition~\ref{def:shock}.} (Subsection~\ref{subsec:conserved_quantities}). The main ingredient in the proof of Theorem~\ref{thm:traj_reg} is an estimate given in Subsection~\ref{subsec:minimal_deviation}: during a non-pathological shock (pathological shocks are excluded \emph{a posteriori}), at least one particle has a below-bounded jump in its velocity (Proposition~\ref{prop:reg_isolated_shock}). We finally provide the proof of Theorem~\ref{thm:traj_reg} in Subsection~\ref{subsec:proof_regularity}.

Throughout the section, we will work with several type of finite sets: the partitions of type $\pi(X)$ and the class of particles of type $C(X,i)$. Some of the arguments or computations will deal with their cardinal. Thus, if $\mathcal{F}$ is a finite set, we will denote by $\#\mathcal{F}$ its cardinal.

\subsection{Properties of the extended gradient}
\label{subsec:lemma_gradient}
The extended gradient of $f$ can be computed explicitly In Lemma \ref{lem:prop_nablaf}, we gather easy properties of $\overline{\nabla}f$ that will be needed in the following. Before doing so, let us introduce some notations.
\begin{Def}
Let $\pi$ be a partition of $\llbracket1,N \rrbracket$. We call $E_\pi$ the linear subspace of $\R^N$ of all $X$ s.t. $\pi$ is a refinement of $\pi(X)$, that is:
\begin{equation*}
E_\pi := \bigcap_{C \in \pi} \bigcap_{i,j \in C} \big\{ X = (x_1,\dots, x_N) \in \R^N \, \big| \, x_i = x_j \big\}.
\end{equation*}
\end{Def}
Here is the lemma:
\begin{Lem}[Properties of $\overline{\nabla}f$]
\label{lem:prop_nablaf}
\begin{enumerate}
\item The extended gradient $\overline{\nabla} f$ has the following symmetry:
\begin{equation}
\label{eq:symmetry_gradient}
\forall X \in \R^N, \, \forall \sigma \in \perm, \qquad \overline{\nabla} f(X^\sigma) = \big( \overline{\nabla} f(X)\big)^{\sigma}.\phantom{\forall X \in \R^N, \, \forall \sigma \in \perm, \qquad}
\end{equation} 
\item The function $X \mapsto |X - \overline{\nabla} f(X)|$ is symmetric:
\begin{equation}
\label{eq:symmetry_potential}
\forall X \in \R^N, \, \forall \sigma \in \perm, \qquad |X^\sigma - \overline{\nabla} f(X^\sigma)|^2 = |X - \overline{\nabla}f(X)|^2. \phantom{\forall X \in \R^N, \, \forall \sigma \in \perm, \qquad}
\end{equation}
\item If $X$ is ordered, then $\overline{\nabla}f(X)$ is the orthogonal projection of $A$ on $E_{\pi(X)}$.
\item If $X$ is ordered and $i \in \{1, \dots, N\}$:
\begin{equation}
\label{eq:coordinate_nablaf}
\forall i= 1, \dots, N, \qquad \big(\overline{\nabla} f(X)\big)_i = \frac{1}{\# C(X,i)} \sum_{j \in C(X,i)} a_j. \phantom{\forall i= 1, \dots, N, \qquad}
\end{equation}
(Recall that $C(X,i)$ is defined in Definition \ref{def:partition}.)
\end{enumerate}
\end{Lem}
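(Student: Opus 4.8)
The plan is to analyze $f(X) = \max_{\sigma \in \perm} X \cdot A^\sigma$ directly as a support function. Since $A$ is strictly ordered and $X$ is ordered, the rearrangement inequality tells us that $X \cdot A^\sigma$ is maximized precisely by those $\sigma$ that sort the coordinates of $X$ in increasing order, i.e. $X \cdot A^\sigma = X \cdot A = \sum_i x_i a_i$ for $\sigma = \mathrm{Id}$, and more generally the set of maximizing permutations is exactly the set of $\sigma$ that permute, within each block of equal coordinates of $X$, freely — that is, $\sigma$ maximizes iff $A^\sigma$ agrees with $A$ after averaging is irrelevant, i.e. iff $A^{\sigma^{-1}}$ is still increasing on each class $C(X,i)$. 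This gives the key combinatorial description: $\partial f(X) = \mathrm{conv}\{ A^\sigma : \sigma \text{ maximizing}\}$, which is the face of the permutohedron spanned by the permutations of $A$ that are increasing on each class of $\pi(X)$.

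From this, I would first prove parts (1) and (2), the symmetries. For any $X$ and any $\sigma \in \perm$, the maximizing set for $X^\sigma$ is just the $\sigma$-conjugate of the maximizing set for $X$, so $\partial f(X^\sigma) = \{ B^\sigma : B \in \partial f(X)\}$; since permuting coordinates is a linear isometry of $\R^N$, it preserves the minimal-norm element, giving $\overline\nabla f(X^\sigma) = (\overline\nabla f(X))^\sigma$, which is \eqref{eq:symmetry_gradient}. Then \eqref{eq:symmetry_potential} follows immediately since $|X^\sigma - (\overline\nabla f(X))^\sigma| = |X - \overline\nabla f(X)|$ again because $\sigma$ acts isometrically. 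Because of this symmetry, for parts (3) and (4) it is legitimate to restrict attention to ordered $X$.

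For part (3), with $X$ ordered I would argue that $\partial f(X) = \{ B \in \R^N : B \text{ is a permutation of } A \text{ that is increasing on each class of } \pi(X)\}$'s convex hull, and that this convex hull is exactly $A + (E_{\pi(X)}^\perp \cap \{\text{permutohedron face}\})$ — more concretely, that $\partial f(X)$ is contained in the affine subspace $\{ B : \sum_{j \in C} B_j = \sum_{j \in C} a_j \text{ for each class } C \text{ of } \pi(X)\}$, and it is symmetric under the subgroup of $\perm$ permuting within classes. The minimal-norm element of a set invariant under an orthogonal group action must be fixed by that group, hence constant on each class; the unique point of the affine subspace above that is constant on each class is precisely the orthogonal projection of $A$ onto $E_{\pi(X)}$. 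That projection has $i$-th coordinate equal to the average of $\{a_j : j \in C(X,i)\}$, which is \eqref{eq:coordinate_nablaf}, so (4) follows from (3). I should also check this candidate really lies in $\partial f(X)$: writing it as a uniform average over the subgroup of within-class permutations of the vertex $A$ (which is a maximizing vertex since $A$ is increasing and $X$ ordered), it is a convex combination of maximizing vertices, hence in $\partial f(X)$.

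The main obstacle is making the face description of $\partial f(X)$ rigorous and, in particular, verifying that the minimal-norm element is forced to be constant on each class of $\pi(X)$ and to lie in the claimed affine subspace — i.e., that $\partial f(X)$ is invariant under within-class permutations and that this invariance, combined with strict convexity of the norm, pins down $\overline\nabla f(X)$ uniquely as the stated projection. One must be a little careful that there are no \emph{other} maximizing permutations beyond the within-class ones; this is where strict ordering of $A$ is used: if two coordinates $x_i < x_j$ are distinct, any maximizing $\sigma$ must assign the larger $a$-value to index $j$, so cross-class swaps strictly decrease the objective. Once this is pinned down, everything else is routine linear algebra.
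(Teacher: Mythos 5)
Your proposal is correct and follows essentially the same route as the paper's proof: identify the maximizing permutations as exactly the within-class (stabilizer) permutations, note that $\partial f(X)$ lies in the affine set where class-sums equal those of $A$, and use permutation symmetry plus uniqueness of the minimal-norm element to force $\overline\nabla f(X)$ to be constant on each class, hence the orthogonal projection of $A$ onto $E_{\pi(X)}$, with \eqref{eq:coordinate_nablaf} read off coordinatewise. The only differences are cosmetic (your final check that the projection indeed lies in $\partial f(X)$ is redundant once the projection is pinned down, and the phrase about $A^{\sigma^{-1}}$ being ``increasing on each class'' is loose, but your later exchange argument using the strict ordering of $A$ fixes the characterization correctly).
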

\begin{Rem}
The extended gradient $\overline{\nabla} f$ is completely characterized by points 1. and 3. (or 4.) of Lemma~\ref{lem:prop_nablaf}.
\end{Rem}
\begin{proof}
\underline{Point 1}. Let $\sigma \in \perm$. By the definition~\eqref{eq:def_f_1d} of $f$, for all $X \in \R^N$, $f(X^\sigma) = f(X)$. Calling $I^\sigma: X \mapsto X^\sigma$, we easily deduce that at the level of subdifferentials: $\partial^- f(X^\sigma) = I^\sigma \big( \partial^- f(X) \big)$. We conclude by the fact that $I^\sigma$ is orthogonal.

\noindent \underline{Point 2}. It is a direct consequence of Point 1.

\noindent \underline{Point 3}. Let $X=(x_1, \dots x_N) \in \R^N$ be an ordered vector. Considering the definition \eqref{eq:def_f_1d} of $f$ and noticing that the maximum is achieved exactly for those $\sigma$ such that $X^\sigma = X$, it appears that $\overline{\nabla} f(X)$ belongs to the convex hull:
\begin{equation*}
\mathrm{Conv}\Big( \big\{ A^\sigma\, \big| \, \sigma \in \perm \mbox{ such that } X^\sigma = X \big\} \Big).
\end{equation*}

For a given $i \in \{1, \dots, N\}$, we call $V^i \in \R^N$ the vector whose $j$-th coordinate is $1$ if $j \in C(X,i)$ and $0$ otherwise. On the one hand, we have $E_{\pi(X)} = \mathrm{Span}\{ V^i\, | \, i=1, \dots, N \}$, and on the other hand, for all $i$, the scalar product $V^i \cdot Y$ is constant on the above-mentioned convex hull. So we deduce: 
\begin{equation*}
A - \overline{\nabla}f(X) \in \big(E_{\pi(X)}\big)^\perp.
\end{equation*}
Hence, we just have to prove that $\overline{\nabla}f(X) \in E_{\pi(X)}$. If $i,j \in \{1, \dots, N\}$ are such that $x_i = x_j$, let us apply formula~\eqref{eq:symmetry_gradient} to the permutation $\sigma := (i,j)$:
\begin{equation*}
\big(\overline{\nabla}f(X)\big)_i=\big(\big(\overline{\nabla}f(X)\big)^\sigma \big)_j = \big(\overline{\nabla}f(X^\sigma) \big)_j = \big(\overline{\nabla}f(X) \big)_j.
\end{equation*}
The result follows.

\noindent \underline{Point 4}. Let $X$ be ordered and $i \in \{1, \dots, N\}$. As $\overline{\nabla} f(X) \in E_{\pi(X)}$, with the notations of the proof of Point~3:
\begin{align*}
\big(\overline{\nabla} f(X)\big)_i &= \frac{1}{\#C(X,i)} \sum_{j\in C(X,i)} \big(\overline{\nabla} f(X)\big)_j = \frac{1}{\#C(X,i)} \overline{\nabla} f(X) \cdot V^i \\
&= \frac{1}{\#C(X,i)} A \cdot V^i = \frac{1}{\#C(X,i)} \sum_{j\in C(X,i)} a_j,
\end{align*}
where we used $A - \overline{\nabla}f(X) \perp V^i$ to get the first identity in the second line.
\end{proof}
The three next subsections will be dedicated to consequences of this lemma:
\begin{itemize}
\item A proof of Proposition~\ref{prop:full_degenerescence};
\item When proving Theorem \ref{thm:traj_reg}, it is enough to consider ordered trajectories (Proposition~\ref{prop:ordered}); 
\item For ordered trajectories, the potential in $\Lambda'$ can be decomposed as sum of a smooth "external" potential and an "internal" energy only depending on $\pi(X)$ (Proposition~\ref{prop:decomposition}).
\end{itemize}

\subsection{Proof of Proposition \ref{prop:full_degenerescence}}
\label{subsec:proof_sticking}
With the help of Lemma \ref{lem:prop_nablaf}, we are ready to prove Proposition \ref{prop:full_degenerescence}.
\begin{proof}[Proof of Proposition \ref{prop:full_degenerescence}]
\underline{Point 1}. Without loss of generality, we can suppose $t_1=0$ and $t_2=T$, that is $P=(p_1, \dots, p_N)$ and $Q=(q_1, \dots, q_N)$ are such that $p_1 = \dots = p_N$ and $q_1 = \dots = q_N$.
	
	Call $\Psi$ the orthogonal projection on the line $E_{\llbracket 1,N \rrbracket} := \{X=(x_, \dots, x_N) \in \R^N \, | \, x_1 = \dots= x_N\}$. It suffices to prove that when $\ZZ$ is a continuous trajectory joining $P$ to $Q$, then $\Lambda'(\Psi(\ZZ)) \leq \Lambda'(\ZZ)$, and with equality if and only if $\ZZ = \Psi(\ZZ)$. As $\Psi$ is $1$-Lipschitz, it reduces the kinetic part of $\Lambda'$. For the potential part, we remark that for all $X \in \R^N$, $E_{\pi(\Psi(X))} = E_{\llbracket 1, N\rrbracket} \subset E_{\pi(X)}$. As a consequence, by Point 3. of Lemma~\ref{lem:prop_nablaf}, we have as soon as $X$ is ordered $\overline{\nabla} f (\Psi(X)) = \Psi(\overline{\nabla} f(X))$. Hence:
	\begin{equation*}
	|\Psi(X) - \overline{\nabla} f(\Psi(X))|^2 = \big| \Psi\big(X - \overline\nabla f(X)  \big)\big|^2 \leq |X - \overline\nabla f (X)|^2,
	\end{equation*}
	with equality if and only if $X \in E_{\llbracket 1, N \rrbracket}$, \emph{i.e.} if and only if $\Psi(X) = X$. This property is extended to non-ordered $X$ using \eqref{eq:symmetry_potential}, and the result follows.
	
\noindent \underline{Point 2}. The function $\overline{\Lambda'} = \overline{\Lambda'}(P,Q)$ defined for all $P,Q \in \R^N$ as the minimal value of $\Lambda'$ is continuous. Indeed, if $P,P',Q,Q' \in \R^N$ are chosen so that $|P'-P| + |Q'-Q| \ll 1$ and if $\ZZ$ is a trajectory joining $P$ to $Q$, we can find a trajectory $\widetilde{\ZZ}$ joining $P'$ to $Q'$ with:\footnote{With a slight abuse of notation, we do not refer explicitly to the dependence of $\Lambda'$ on $P,Q$.}
\begin{equation}
\label{eq:lsc}
\Lambda'(\widetilde \ZZ) \leq \Lambda'(\ZZ) + \underset{(P',Q') \to (P,Q)}{o}(1).
\end{equation} 
To do so, it suffices to choose $\tau \sim |P'-P| + |Q'-Q|$, and to define $\widetilde{\ZZ}$ as the trajectory joining $P'$ to $P$ in straight line between times $0$ and $\tau$, joining $P$ to $Q$ between times $\tau$ and $T-\tau$ by following $\ZZ$ with a proper affine change of time, and finally joining $Q$ to $Q'$ in straight line between times $T-\tau$ and $T$. This shows that $\overline{\Lambda'}$ is lower semi-continuous, but the continuity is obtained by noticing that the $o$ in \eqref{eq:lsc} is locally uniform on $P,Q \in \R^N$. The argument is easily adapted to show that $\widetilde{\Lambda'} = \widetilde{\Lambda'}(P,Q)$ defined for $P,Q \in \R^N$ by:
\begin{equation*}
\widetilde{\Lambda'}(P,Q) := \inf \big\{ \Lambda'(\ZZ) \, \big| \, \ZZ \mbox{ whose set of } t \mbox{ s.t. } Z_t \in E_{\llbracket 1, N \rrbracket} \mbox{ is negligible}  \big\}
\end{equation*}
is also continuous. Besides, the set $\mathcal{U}$ defined in the statement clearly satisfies:
\begin{equation*}
\mathcal{V} := \big\{ P,Q \in \R^N \, \big| \, \overline{\Lambda'}(P,Q) < \widetilde{\Lambda'}(P,Q) \big\} \subset \mathcal{U}.
\end{equation*}
By continuity of $\overline{\Lambda'}$ and $\widetilde{\Lambda'}$, $\mathcal{V}$ is an open set. Hence it remains to prove that:
\begin{equation*}
\big\{ P,Q \in \R^N \, \big| \, p_1 = \dots = p_N \mbox{ and } q_1 = \dots =q_N  \big\} = E_{\llbracket 1, \N \rrbracket} \times E_{\llbracket 1, \N \rrbracket} \subset \mathcal{V}.
\end{equation*}
To do so, we take $P,Q \in E_{\llbracket 1, N \rrbracket}$, $\ZZ$ a curve joining $P$ to $Q$ such that $\{t \, | \, Z_t \in E_{\llbracket 1, N \rrbracket}\}$ is negligible, we still call $\Psi$ the orthogonal projection on $E_{\llbracket 1, N \rrbracket}$, and we prove that
\begin{equation*}
\Lambda'(\ZZ) \geq \Lambda'(\Psi(\ZZ)) + a,
\end{equation*}
where $a>0$ does not depend on $\ZZ$. Let us call $\Phi := \mathrm{Id} - \Psi$ the orthogonal projection on the orthogonal of $E_{\llbracket 1, N \rrbracket}$. As in the proof of the first point, $\overline\nabla f\circ \Psi= \Psi \circ \overline\nabla f$. As a consequence:
\begin{align*}
\Lambda'(Z) &= \int_0^T\left\{ |\Psi(\dot Z_t)|^2 + |\Psi(Z_t) - \Psi(\overline\nabla f(Z_t))|^2 \right\}\D t + \int_0^T\left\{ |\Phi(\dot Z_t)|^2 + |\Phi(Z_t) - \Phi(\overline\nabla f(Z_t))|^2 \right\}\D t \\
&= \Lambda'(\Psi(\ZZ)) + \int_0^T\left\{ |\dot Z^\perp_t|^2 + |Z^\perp_t - \Phi(\overline\nabla f(Z_t))|^2 \right\}\D t,
\end{align*}
where $\ZZ^\perp = Z^\perp_t:= \Phi(Z_t)$ is a curve joining $0$ to $0$. But for almost all $t$, $Z_t \notin E_{\llbracket 1, N \rrbracket}$, so as we saw in the proof of the first point, $\overline\nabla f(Z_t) \notin E_{\llbracket 1, N \rrbracket}$. As $\overline\nabla f$ only takes a finite number of values (see Lemma~\ref{lem:prop_nablaf}), for almost all $t$, $\Phi(\overline \nabla f(Z_t))$ belongs to some finite set, say $\mathcal{G}$, which does not contain $0$. Hence,
\begin{equation*}
 \int_0^T\left\{ |\dot Z^\perp_t|^2 + |Z^\perp_t - \Phi(\overline\nabla f(Z_t))|^2 \right\}\D t \geq \int_0^T\left\{ |\dot Z^\perp_t|^2 + \mathrm{dist}(Z^\perp_t, \mathcal{G})^2 \right\}\D t,
\end{equation*}
where $\mathrm{dist}(Z, \mathcal{G})$ denotes the distance from $Z$ to $\mathcal{G}$. Because $\ZZ^\perp$ joins $0$ to $0$ and $\mathcal{G}$ does not contain $0$, this last integral is easily seen to be below bounded away from $0$ independently of $\ZZ$, and the result follows.
\end{proof}

\subsection{Ordering of the particles}
\label{subsec:order}
The purpose of this subsection is to show that when proving Theorem \ref{thm:traj_reg}, we can restrict ourselves to study trajectories that remain ordered (see Figure~\ref{fig:ordered}). This is due to the following proposition.
\begin{figure}
\hspace{2cm}
\begin{tikzpicture}
\draw[thick, ->] (0,-0.3) -- (0,4.6);
\draw[thick, densely dashed, blue] (1,0) to[bend left] (1.6, 0.5);
\draw[thick, densely dashed, blue] (1.8,1) to[out = 20, in = -160] (2.6, 2);
\draw[very thick, dotted, ForestGreen] (2.9,2.8) to[bend right = 15] (1.8,3.5);
\draw[very thick, dotted, ForestGreen] (1.785,3.5) to[bend left = 15] (1.685,3.8);
\draw[thick, red] (1.815,3.5) to[bend left = 15] (1.715,3.8);
\draw[very thick, dotted, ForestGreen] (1.7,3.8) to[bend left = 10] (0.7,4.3);
\draw[thick, densely dashed, blue] (1.585,0.5) to[bend left = 10] (1.785,1);
\draw[thick, densely dashed, blue] (2.585,2) to[bend right = 10] (2.885,2.8);
\draw[thick, red] (2,0) to[bend right] (1.6, 0.5);
\draw[thick, red] (1.8,1) to[bend left] (1.8,3.5);
\draw[thick, red] (1.7,3.8) to[bend right = 10] (2.5,4.3);
\draw[thick, red] (1.615,0.5) to[bend left = 10] (1.815,1);
\draw[very thick, dotted, ForestGreen] (4,0) to[bend right] (2.6,2);
\draw[thick, densely dashed, blue] (2.9,2.8) to[bend right = 20] (3.6,4.3);
\draw[very thick, dotted, ForestGreen] (2.615,2) to[bend right = 10] (2.915,2.8);
\draw (2pt,0) -- (-2pt,0) node[above left]{$0$};
\draw (2pt,4.3) -- (-2pt, 4.3) node[above left]{$T$};
\draw plot[mark = +] (1,0) node[below left]{$p_1$};
\draw plot[mark = +] (2,0) node[below right]{$p_2$};
\draw plot[mark = +] (4,0) node[below right]{$p_3$};
\draw plot[mark = +] (0.7,4.3) node[above left]{$q_1$};
\draw plot[mark = +] (2.5,4.3) node[above right]{$q_2$};
\draw plot[mark = +] (3.6,4.3) node[above right]{$q_3$};
\end{tikzpicture}
\hfill
\begin{tikzpicture}
\draw[thick, ->] (0,-0.3) -- (0,4.6);
\draw[thick, densely dashed, blue] (1,0) to[bend left] (1.6, 0.5);
\draw[thick, red] (1.8,1) to[out = 20, in = -160] (2.6, 2);
\draw[thick, red] (2.9,2.8) to[bend right = 15] (1.8,3.5);
\draw[thick, densely dashed, blue] (1.785,3.5) to[bend left = 15] (1.685,3.8);
\draw[thick, red] (1.815,3.5) to[bend left = 15] (1.715,3.8);
\draw[thick, densely dashed, blue] (1.7,3.8) to[bend left = 10] (0.7,4.3);
\draw[thick, densely dashed, blue] (1.585,0.5) to[bend left = 10] (1.785,1);
\draw[thick, red] (2.585,2) to[bend right = 10] (2.885,2.8);
\draw[thick, red] (2,0) to[bend right] (1.6, 0.5);
\draw[thick, densely dashed, blue] (1.8,1) to[bend left] (1.8,3.5);
\draw[thick, red] (1.7,3.8) to[bend right = 10] (2.5,4.3);
\draw[thick, red] (1.615,0.5) to[bend left = 10] (1.815,1);
\draw[very thick, dotted, ForestGreen] (4,0) to[bend right] (2.6,2);
\draw[very thick, dotted, ForestGreen] (2.9,2.8) to[bend right = 20] (3.6,4.3);
\draw[very thick, dotted, ForestGreen] (2.615,2) to[bend right = 10] (2.915,2.8);
\draw (2pt,0) -- (-2pt,0) node[above left]{$0$};
\draw (2pt,4.3) -- (-2pt, 4.3) node[above left]{$T$};
\draw plot[mark = +] (1,0) node[below left]{$p_1$};
\draw plot[mark = +] (2,0) node[below right]{$p_2$};
\draw plot[mark = +] (4,0) node[below right]{$p_3$};
\draw plot[mark = +] (0.7,4.3) node[above left]{$q_1$};
\draw plot[mark = +] (2.5,4.3) node[above right]{$q_2$};
\draw plot[mark = +] (3.6,4.3) node[above right]{$q_3$};
\end{tikzpicture}
\hspace{2cm}
\caption{\label{fig:ordered} These two trajectories share their initial and final position up to ordering and their actions. But to the right, the order is preserved while to the left, this is not the case.}
\end{figure}
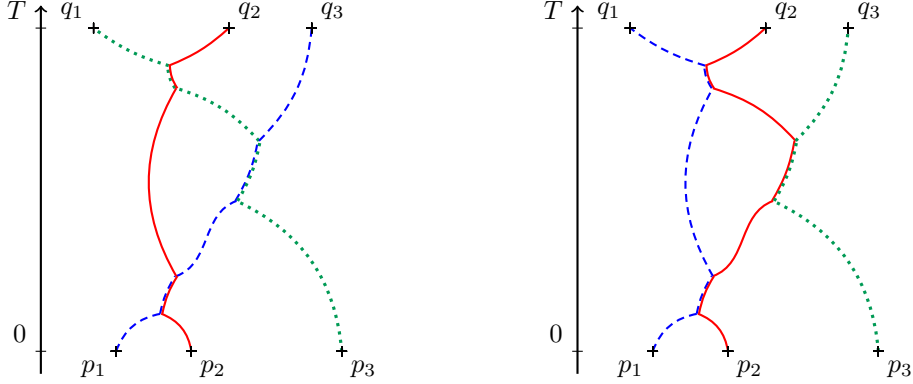
\begin{Prop}
\label{prop:ordered}
Let $\ZZ = Z_t$ be a global minimizer of $\Lambda'$. We call $\widetilde{\ZZ} = \widetilde{Z}_t$ the trajectory obtained by reordering the coordinates of $\ZZ$ in increasing order. Then $\widetilde{\ZZ}$ is also a global minimizer of $\Lambda'$. 

Moreover, $\ZZ$ has the regularity stated in Theorem \ref{thm:traj_reg} if and only if $\widetilde{\ZZ}$ does.

In particular, $\Lambda'$ always admits an ordered minimizer, and it it is enough to prove Theorem \ref{thm:traj_reg} for such minimizers.
\end{Prop}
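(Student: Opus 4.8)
The idea is to write $\widetilde\ZZ=\mathrm{sort}\circ\ZZ$, where $\mathrm{sort}\colon\R^N\to\R^N$ returns the coordinates in increasing order, and to exploit two features of this map: it is $1$-Lipschitz for the Euclidean norm (a standard fact, e.g. by realizing it as a composition of non-expansive adjacent transpositions), and for every $t$ one has $\widetilde Z_t=Z_t^{\sigma_t}$ for some permutation $\sigma_t\in\perm$, so that the potential term is left invariant by point~2 of Lemma~\ref{lem:prop_nablaf}, namely \eqref{eq:symmetry_potential}.

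\textbf{Step 1: $\widetilde\ZZ$ is admissible and $\Lambda'(\widetilde\ZZ)\le\Lambda'(\ZZ)$.} Since $\mathrm{sort}$ is continuous and $\ZZ\in H^1([0,T];\R^N)$ is continuous, $\widetilde\ZZ$ is continuous. Since $\ZZ$ is absolutely continuous and $\mathrm{sort}$ is $1$-Lipschitz, $\widetilde\ZZ$ is absolutely continuous with $|\dot{\widetilde Z}_t|\le|\dot Z_t|$ for a.e. $t$, hence $\widetilde\ZZ\in H^1$ and $\int_0^T|\dot{\widetilde Z}_t|^2\,\D t\le\int_0^T|\dot Z_t|^2\,\D t$. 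The endpoint constraint is preserved because $\mathrm{sort}(Z_0)$ is again a reordering of $P$ (as $Z_0\in\{P^\sigma\}$) and similarly at time $T$, so $\widetilde Z_0\in\{P^\sigma\}$ and $\widetilde Z_T\in\{Q^\sigma\}$. For the potential, writing $\widetilde Z_t=Z_t^{\sigma_t}$ and using \eqref{eq:symmetry_potential} gives $|\widetilde Z_t-\overline\nabla f(\widetilde Z_t)|^2=|Z_t-\overline\nabla f(Z_t)|^2$ for every $t$. Adding the two contributions, $\Lambda'(\widetilde\ZZ)\le\Lambda'(\ZZ)$; since $\ZZ$ is a global minimizer and $\widetilde\ZZ$ is admissible, equality holds, so $\widetilde\ZZ$ is a global minimizer as well (and in fact $\int_0^T|\dot{\widetilde Z}_t|^2\,\D t=\int_0^T|\dot Z_t|^2\,\D t$).

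\textbf{Step 2: equivalence of the regularity.} For the implication $\ZZ$ regular $\Rightarrow$ $\widetilde\ZZ$ regular, I keep the same breakpoints $0=t_0<\dots<t_p=T$. On each $(t_{i-1},t_i)$, $\pi(\ZZ)$ is constant and $\ZZ$ is smooth, so two distinct groups of coordinates of $\ZZ$ cannot cross (a crossing would make $\pi(Z_t)$ strictly coarser at the crossing instant), hence the sorting permutation $\sigma_t$ can be chosen constant, say $\sigma_t\equiv\sigma_i$, on $(t_{i-1},t_i)$; thus $\widetilde\ZZ=\ZZ^{\sigma_i}$ there, which extends smoothly to $[t_{i-1},t_i]$ (agreeing with $\widetilde\ZZ$ by continuity) and has constant partition $\sigma_i(\pi(\ZZ))$. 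For the converse, suppose $\widetilde\ZZ$ is smooth with constant partition on some interval $I$; then the distinct values $g_1(\cdot)<\dots<g_m(\cdot)$ of $\widetilde Z_\cdot$ are pairwise disjoint smooth curves on $I$, and $Z_t$ has the same multiset of coordinates as $\widetilde Z_t$, so each coordinate $z_j(\cdot)$ of $\ZZ$ is continuous with values in the finite set $\{g_1(t),\dots,g_m(t)\}$. Since the $g_k$ are pairwise disjoint, each set $\{t\in I:z_j(t)=g_k(t)\}$ is clopen in $I$, so by connectedness $z_j\equiv g_{k(j)}$ on $I$ for a fixed index $k(j)$; hence $z_j$ is smooth on $I$ (and on $\overline I$) and $\pi(\ZZ)$, characterized on $I$ by $i\sim j\iff k(i)=k(j)$, is constant. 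Applying this to the intervals coming from the regularity of $\widetilde\ZZ$ yields the regularity of $\ZZ$.

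\textbf{Step 3 and the main obstacle.} Point~3 is then immediate: $\widetilde\ZZ$ is ordered by construction and a global minimizer by Step~1, so an ordered minimizer always exists; and if Theorem~\ref{thm:traj_reg} is known for ordered minimizers, an arbitrary minimizer $\ZZ$ inherits the conclusion because its reordering $\widetilde\ZZ$ is an ordered minimizer and the converse direction of Step~2 transfers the regularity back to $\ZZ$. I expect the only delicate point to be this converse direction of Step~2 --- ruling out that a coordinate of $\ZZ$ "jumps between groups" of $\widetilde\ZZ$ --- for which the clopen/connectedness argument above (resting on the groups of $\widetilde\ZZ$ being pairwise disjoint smooth curves, itself a consequence of the constant-partition hypothesis on $\widetilde\ZZ$) is the crux; everything else is routine.
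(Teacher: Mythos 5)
Your proof is correct, and Step 1 (the heart of the proposition) is exactly the paper's argument: the sorting map is $1$-Lipschitz, so it does not increase the kinetic term, while the potential term is invariant by the permutation symmetry \eqref{eq:symmetry_potential} of Lemma~\ref{lem:prop_nablaf}, and the endpoint constraint is preserved. Where you diverge is in the transfer of the regularity of Theorem~\ref{thm:traj_reg}: the paper reduces it to the observation that sorting preserves the cardinal $\#\pi(Z_t)$ of the partition together with the fact that, along a continuous curve, constancy of $\pi$ on an interval is equivalent to constancy of $\#\pi$ (proved by a short accumulation-point argument); you instead argue directly that on an interval where the partition is constant the sorting permutation can be frozen (no crossings between distinct groups), and, for the converse, that each coordinate of $\ZZ$ must follow a single one of the pairwise disjoint smooth branches of $\widetilde\ZZ$ by a clopen/connectedness argument. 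Both routes are elementary and valid; yours is somewhat longer but has the merit of making explicit why \emph{smoothness} on each closed subinterval (and not merely constancy of the partition) passes between $\ZZ$ and $\widetilde\ZZ$, a point the paper leaves implicit in "the result follows."
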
 
Thanks to this proposition, from now on, we only work with ordered minimizers of $\Lambda'$. These minimizers $\ZZ = Z_t$ satisfy in particular $Z_0=P$ and 
$Z_T = Q$ (as we chose them to be ordered in the first place).
\begin{proof}
Let $\ZZ$ and $\widetilde{\ZZ}$ be as in the statement of the proposition. Point 2 of Lemma \ref{lem:prop_nablaf} implies:
\begin{equation*}
\int_{0}^{T} |\widetilde{Z}_t - \overline{\nabla} f( \widetilde{Z}_t) |^2  \D t= \int_{0}^{T} |Z_t - \overline{\nabla} f( Z_t) |^2 \D t .
\end{equation*}

We call $\Psi: \R^N \to \R^N$ the operator that reorders the coordinates of a vector in increasing order, so that in particular for all $t$, $\widetilde{Z}_t = \Psi(Z_t)$. A simple application of the rearrangement inequality shows that $\Psi$ is $1$-Lipschitz. In particular, it reduces the action of curves:
\begin{equation*}
\int_{0}^{T} | \dot{\widetilde{Z}}_t |^2 \D t  \leq \int_{0}^{T} | \dot{Z}_t |^2 \D t.
\end{equation*}
By adding the two last formulas, and by noticing that the endpoint constraint is fulfilled, we get $\Lambda'(\widetilde{\ZZ}) \leq \Lambda'(\ZZ)$. As $\ZZ$ is a minimizer, this inequality is in fact an equality, and $\widetilde{\ZZ}$ is also a minimizer. 

Remark that both $\ZZ$ and $\widetilde{\ZZ}$ are continuous because they have finite action. Hence, the second claim of the proposition is a consequence of the two following facts:
\begin{itemize}
\item For all $t \in [0,T]$, $\#\pi(\widetilde{Z}_t) = \#\pi(Z_t)$.
\item For any continuous trajectory $t \in I \mapsto X_t \in \R^N$ where $I$ is an interval, $t\mapsto\pi(X_t)$ is constant if and only if $t \mapsto\# \pi(X_t)$ is constant.
\end{itemize}
Indeed in that case, $t \mapsto \pi(Z_t)$ and $t \mapsto\pi(\widetilde{Z}_t)$ are constant on the same intervals, and the result follows.

The first point and the "only if" part of the second point are trivial. 

For the "if" part of the second one, we reason by contraposition. Suppose $s \mapsto \pi(X_s)$ has a discontinuity at time $t$ and we prove that $s \mapsto \# \pi(X_s)$ also does. If $s \mapsto \pi(X_s)$ has a discontinuity at time $t$, we can find two distinct accumulation points $\pi_1$ and $\pi_2$ of $s \mapsto \pi(X_s)$ at time $t$. As for all $\pi$, the set $E_\pi$ is closed, $X_t$ belongs to $E_{\pi_1} \cap E_{\pi_2}$. But this set is noting but $E_{\overline{\pi}}$ where $\overline{\pi}$ is the finest partition of which $\pi_1$ and $\pi_2$ are refinements, that is the partition corresponding to the relation:
\begin{equation*}
i \sim j \quad \Leftrightarrow \quad \exists C \in \pi_1 \cup \pi_2 \, \mbox{  s.t. }\, \{ i,j \} \subset C.
\end{equation*}
In particular, $\pi(X_t)$ is a refinement of $\overline{\pi}$ and as $\pi_1 \neq \pi_2$, we easily get:
\begin{equation*}
\# \pi(X_t) \leq \# \overline{\pi} < \max\big( \# \pi_1, \#\pi_2 \big).
\end{equation*}
So $s \mapsto \#\pi(X_s)$ has a discontinuity at time $t$, and the result follows.
\end{proof}
\subsection{Decomposition of the potential}
\label{subsec:decomposition_potential}
Here, we compute explicitly the values of the potential $X \mapsto |X - \overline{\nabla} f(X)|^2$ on ordered vectors $X \in \R^N$. Notice that for such vectors $X$, $\pi(X)$ has an additional structure: if $C \in \pi(X)$, then $C$ is an interval of integers. We say that such partitions are \emph{ordered}. We prove the following:
\begin{Prop}
\label{prop:decomposition}
For all ordered $X \in \R^N$:
\begin{equation}
\label{eq:decomposition_potential}
|X - \overline{\nabla} f(X)|^2 = |X-A|^2 + h(\pi(X)) - |A|^2,
\end{equation}
where $h$ is defined on a partition $\pi$ of $\llbracket 1, N \rrbracket$ by:
\begin{equation}
\label{eq:def_h}
h(\pi) := \sum_{C \in \pi} \frac{1}{\#C} \Big|\sum_{j \in C} a_j \Big|^2.
\end{equation}
In particular, $h$ has the following monotonicity property: if $\pi$ and $\pi'$ are two ordered partitions and if $\pi'$ is a strict refinement of $\pi$, then $h(\pi) < h(\pi')$.
\end{Prop}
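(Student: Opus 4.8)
The plan is to prove the identity~\eqref{eq:decomposition_potential} by a direct computation using Point~4 of Lemma~\ref{lem:prop_nablaf}, and then deduce the monotonicity of $h$ from a classical convexity (Cauchy--Schwarz) argument. First I would fix an ordered $X \in \R^N$ and write $G := \overline\nabla f(X)$. By Point~4 of Lemma~\ref{lem:prop_nablaf}, $G$ is constant on each class $C \in \pi(X)$, with value $\frac{1}{\#C}\sum_{j\in C} a_j$ on that class; in particular $G$ is exactly the orthogonal projection of $A$ onto $E_{\pi(X)}$ (Point~3). The key algebraic observation is then that, since $G \in E_{\pi(X)}$ and $A - G \perp E_{\pi(X)}$, we have $(X - A)\cdot(A - G) = (X - G)\cdot(A-G) - |A-G|^2$, but more usefully I would just expand directly:
\begin{equation*}
|X - G|^2 = |X - A|^2 + 2(X-A)\cdot(A - G) + |A - G|^2.
\end{equation*}
Because $X$ is ordered, $X \in E_{\pi(X)}$, hence $X - A$ has the same inner product against any vector of $E_{\pi(X)}$ as $-(A-G)$... more simply, $(X-A)\cdot (A-G) = X\cdot(A-G) - A\cdot(A-G) = X\cdot(A-G) - |A|^2 + A\cdot G$, and since $X \in E_{\pi(X)}$ while $A - G \perp E_{\pi(X)}$ we get $X\cdot(A-G) = 0$, so $(X-A)\cdot(A-G) = -|A|^2 + A\cdot G = -|A|^2 + |G|^2$ (using $A\cdot G = |G|^2$, again from $A - G \perp G \in E_{\pi(X)}$). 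Plugging this in and using $|A-G|^2 = |A|^2 - |G|^2$ gives $|X-G|^2 = |X-A|^2 - |A|^2 + |G|^2$, so it only remains to check $|G|^2 = h(\pi(X))$. This is immediate from Point~4: $|G|^2 = \sum_{i=1}^N G_i^2 = \sum_{C \in \pi(X)} \#C \cdot \big(\frac{1}{\#C}\sum_{j\in C}a_j\big)^2 = \sum_{C\in\pi(X)} \frac{1}{\#C}\big(\sum_{j\in C} a_j\big)^2 = h(\pi(X))$, which is~\eqref{eq:def_h}.

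For the monotonicity statement, I would argue class by class. Suppose $\pi'$ is an ordered partition refining $\pi$; it suffices to treat the case where $\pi'$ is obtained from $\pi$ by splitting a single class $C$ into two nonempty sub-intervals $C = C_1 \sqcup C_2$ (the general case follows by iterating and summing). Writing $m_k := \#C_k$, $s_k := \sum_{j\in C_k} a_j$, what must be shown is
\begin{equation*}
\frac{s_1^2}{m_1} + \frac{s_2^2}{m_2} \geq \frac{(s_1 + s_2)^2}{m_1 + m_2},
\end{equation*}
which is exactly the Cauchy--Schwarz inequality (equivalently, convexity of $t \mapsto t^2$), with equality iff $s_1/m_1 = s_2/m_2$. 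Since $a_1 < \dots < a_N$ is strictly ordered and $C_1, C_2$ are nonempty intervals, the averages $s_1/m_1$ and $s_2/m_2$ are distinct, so the inequality is strict. Summing over all the splits needed to pass from $\pi$ to $\pi'$ (and leaving unchanged the contribution of every class of $\pi$ not involved in a split) yields $h(\pi) < h(\pi')$.

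I do not expect any serious obstacle here: the first part is bookkeeping with orthogonal projections, and the only point to be careful about is keeping track of which vectors lie in $E_{\pi(X)}$ and which are orthogonal to it (this is what makes the cross term collapse), together with the standing convention $|A^\sigma| = |A|$ used implicitly when $X$ ranges over orderings. The mildly delicate point is the reduction in the monotonicity argument to a single split and the observation that strict ordering of $A$ forces strict inequality; both are routine once phrased correctly.
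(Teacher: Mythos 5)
Your proof is correct. For the identity~\eqref{eq:decomposition_potential} you follow essentially the paper's route: both arguments rest on Points 3 and 4 of Lemma~\ref{lem:prop_nablaf}, and your direct expansion of $|X-G|^2$, using $X, G\in E_{\pi(X)}$ and $A-G\perp E_{\pi(X)}$ to collapse the cross terms, is just the paper's double application of the Pythagorean theorem written out; the computation $|\overline\nabla f(X)|^2=h(\pi(X))$ from~\eqref{eq:coordinate_nablaf} is identical. Where you genuinely diverge is the monotonicity of $h$: the paper observes that $h(\pi)$ is the squared norm of the orthogonal projection of $A$ onto $E_\pi$, so $E_\pi\subset E_{\pi'}$ immediately gives $h(\pi)\le h(\pi')$, with strictness because (by~\eqref{eq:coordinate_nablaf} and the strict ordering of $A$) the projection of $A$ onto $E_{\pi'}$ does not lie in $E_\pi$; you instead reduce to a single split $C=C_1\sqcup C_2$ and invoke the Cauchy--Schwarz (convexity) inequality $s_1^2/m_1+s_2^2/m_2\ge (s_1+s_2)^2/(m_1+m_2)$, with strictness because two disjoint consecutive intervals of a strictly increasing sequence have distinct averages. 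Both are sound: the paper's version is coordinate-free and needs no induction, while yours makes the equality case explicit but should spell out (in one sentence) that the intermediate partitions in your chain of single splits are again ordered, i.e.\ that splitting off the leftmost subclass of $\pi'$ inside a class of $\pi$ leaves an interval. One cosmetic remark: the invariance $|A^\sigma|=|A|$ you mention at the end is not actually used anywhere in this argument.
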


The more particle are stuck together, the lower $h$. This is the reason for which $\Lambda'$ favours the sticking of particles. The function $-h$ can be understood as the internal energy of the system.

Dropping the constant term $|A|^2/2$ in~\eqref{eq:decomposition_potential} and defining $\Lambda''$ on a trajectory $\ZZ$ by:
\begin{equation}
\label{eq:decomposed_functional_dim1}
\Lambda'' ( \ZZ ) = \left\{
\begin{aligned}
&\int_{0}^{T}\left\{ | \dot{Z}_t |^2 + |Z_t - A |^2 + h(\pi(Z_t))\right\} \D t , && \mbox{if } \ZZ \in H^1([0, T]; \R^{N}),\\[-10pt]
&&& Z_{0} =P \mbox{ and } Z_{T} =Q ,\\[5pt]
&+\infty, && \mbox{else,}
\end{aligned}
\right.
\end{equation} 
it is clear that $\Lambda'$ and $\Lambda ''$ have the same minimizers in the class of ordered trajectories. Hence, as a consequence of Proposition~\ref{prop:ordered}, it suffices to prove the conclusion of Theorem~\ref{thm:traj_reg} for the minimizers of $\Lambda''$ in the class of ordered trajectories.
\begin{proof}[Proof of Proposition~\ref{prop:decomposition}]
Let $X \in \R^N$ be an ordered vector. By Point 3. of Lemma \ref{lem:prop_nablaf}, we have $A - \overline{\nabla} f(X) \in \big( E_{\pi(X)}\big)^\perp$ and both $X$ and $\overline{\nabla} f(X) \in E_{\pi(X)}$. So using twice the Pythagorean theorem, we get:

\begin{equation*}
|X-\overline{\nabla} f(X)|^2 = |X - A|^2 - |A - \overline{\nabla} f(X)|^2 = |X - A|^2 + |\overline{\nabla} f(X)|^2 - |A|^2.
\end{equation*}
The identities \eqref{eq:decomposition_potential} and \eqref{eq:def_h} are obtained by computing $|\overline{\nabla} f(X)|^2$ using \eqref{eq:coordinate_nablaf}. 

If we recap, $h(\pi)$ is the squared norm of the orthogonal projection of $A$ on $E_{\pi}$. But if $\pi'$ is a refinement of $\pi$, $E_{\pi} \subset E_{\pi'}$, and hence $h(\pi) \leq h(\pi')$. The strict inequality is obtained by noticing with the help of \eqref{eq:coordinate_nablaf} and using the strict ordering of $A$ that if in addition $\pi$ and $\pi'$ are ordered and $\pi' \neq \pi$, then the projection of $A$ on $E_{\pi'}$ does not belong to $E_\pi$. 
\end{proof}
\subsection{Conserved quantities}
\label{subsec:conserved_quantities}
In this subsection, we discuss two simple and yet structural properties of the dynamic prescribed by the functionals $\Lambda'$, $\Lambda''$: the Hamiltonian of the system is conserved (Proposition \ref{prop:conservation_energy}), and its center of mass is smooth (Proposition \ref{prop:conservation_impulsion}). In particular, the momentum of the system is conserved during shocks.
\begin{Prop}
\label{prop:conservation_energy}
Let $\ZZ$ be an ordered minimizer of $\Lambda''$. Then:
\begin{equation}
\label{eq:def_energy}
\mathcal{E} = \mathcal{E}(t) := |\dot Z_t|^2 - |Z_t - A|^2 - h(\pi(Z_t))
\end{equation}
is constant in the sense of distributions.
\end{Prop}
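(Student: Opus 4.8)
The plan is to use an \emph{inner variation} (time reparametrization) of the minimizer $\ZZ$. This is the classical route to conservation of the Hamiltonian for an autonomous Lagrangian, and it has the virtue of being completely insensitive to the spatial non-smoothness of the potential $X \mapsto |X-A|^2 + h(\pi(X))$: reparametrizing in time only ever evaluates the potential along the \emph{unchanged} curve $\ZZ$, so the lack of differentiability of $f$ plays no role.

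Write $U(X) := |X-A|^2 + h(\pi(X))$, so that $\Lambda''(\ZZ) = \int_0^T \{|\dot Z_t|^2 + U(Z_t)\}\,\D t$. I would first record the elementary measurability and integrability facts: $t\mapsto |Z_t-A|^2$ is continuous, $X\mapsto h(\pi(X))$ is lower semicontinuous — hence Borel — and bounded by $|A|^2$ (it is the squared norm of the projection of $A$ onto $E_{\pi(X)}$, and $\pi(Y)$ refines $\pi(X)$ for $Y$ near $X$, whence $h(\pi(Y))\ge h(\pi(X))$), so $t\mapsto U(Z_t)\in L^1([0,T])$; and $t\mapsto|\dot Z_t|^2\in L^1$ since $\ZZ\in H^1$. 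In particular $\mathcal E\in L^1([0,T])$ and "constant in the sense of distributions" means precisely $\mathcal E' = 0$ in $\mathcal D'((0,T))$, i.e.\ $\int_0^T \mathcal E(t)\,\phi'(t)\,\D t = 0$ for every $\phi\in C_c^\infty((0,T))$.

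Now fix such a $\phi$ and, for $|s|$ small, let $g_s$ be the diffeomorphism of $[0,T]$ with inverse $g_s^{-1}(t) = t + s\phi(t)$; since $\phi(0)=\phi(T)=0$ it fixes the endpoints, and $1+s\phi'(t)>0$ for $|s|$ small makes it orientation-preserving. Set $Z^s_\tau := Z_{g_s(\tau)}$. Then $Z^s$ is $H^1$, has the same (ordered) endpoints $P,Q$, and is ordered whenever $\ZZ$ is; hence it is an admissible competitor. Changing variables $t=g_s(\tau)$ in the action gives
\[
\Lambda''(Z^s) \;=\; \int_0^T \Big\{ |\dot Z_t|^2\,\psi_s(t) \;+\; \frac{U(Z_t)}{\psi_s(t)} \Big\}\,\D t, \qquad \psi_s(t):=\frac{1}{1+s\phi'(t)}.
\]
Since $\psi_s(t) = 1 - s\phi'(t) + O(s^2)$ and $1/\psi_s(t) = 1 + s\phi'(t)$, with error terms uniform in $t$, dominated convergence (using $|\dot Z|^2, U(Z)\in L^1$) allows differentiation under the integral at $s=0$:
\[
\frac{\D}{\D s}\bigg|_{s=0}\Lambda''(Z^s) \;=\; \int_0^T \big( U(Z_t) - |\dot Z_t|^2 \big)\,\phi'(t)\,\D t \;=\; -\int_0^T \mathcal E(t)\,\phi'(t)\,\D t.
\]
Because $\ZZ$ minimizes $\Lambda''$ and $s\mapsto Z^s$ is an admissible family through $\ZZ$, this derivative vanishes, so $\int_0^T \mathcal E(t)\,\phi'(t)\,\D t = 0$ for all $\phi\in C_c^\infty((0,T))$, which is exactly $\mathcal E'=0$ in $\mathcal D'((0,T))$.

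The argument is essentially routine; the only steps needing a little care are the measurability and boundedness of $t\mapsto U(Z_t)$ (handled by the lower semicontinuity of $X\mapsto h(\pi(X))$, itself a consequence of Proposition~\ref{prop:decomposition}), the verification that $Z^s$ is an admissible — in particular ordered — competitor, and the justification of differentiation under the integral sign. None of these touches the non-differentiability of $f$, which is why the inner-variation method is the right tool here rather than the Euler--Lagrange equation.
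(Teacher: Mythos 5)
Your proof is correct and follows exactly the paper's (one-line) argument: the paper also proves this by an inner variation, comparing $\Lambda''(\ZZ)$ with $\Lambda''$ evaluated on the time-reparametrized curve $t \mapsto Z_{t+\eps\varphi(t)}$ for $\varphi\in C_c^\infty((0,T))$, which is the same construction as your $Z^s$ up to inverting the diffeomorphism. Your write-up simply supplies the routine details (measurability and boundedness of $t\mapsto h(\pi(Z_t))$, admissibility of the competitor, differentiation under the integral) that the paper leaves implicit.
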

\begin{proof}
The proof is completely standard and consists in comparing the value of $\Lambda''$ on $\ZZ$ and $t \mapsto Z_{t + \eps \varphi(t)}$ for small $\eps$ and functions $\varphi$ that are smooth and compactly supported in $(0,T)$.
\end{proof}
\begin{Prop}
\label{prop:conservation_impulsion}
Let $\ZZ = (z_1(t), \dots, z_N(t))$ be an ordered minimizer of $\Lambda''$. Call $a := (a_1 + \dots + a_N)/N$ and for $t \in [0,T]$:
\begin{equation*}
\mathcal{M}(t) := \frac{1}{N} \sum_{i=1}^N z_i(t)\qquad \mbox{and} \qquad \mathcal{P}(t) := \frac{1}{N} \sum_{i=1}^N \dot z_i(t).
\end{equation*}
($\mathcal{M}$ is well define for all $t$, and $\mathcal{P}$ for almost all $t$.) Then $\mathcal{M}, \mathcal{P}$ solve distributionally:
\begin{gather*}
\dot{\mathcal{M}}(t) = \mathcal{P}(t),\\
\dot{\mathcal{P}}(t) = \mathcal{M}(t) - a. 
\end{gather*}
In particular, $\mathcal{M}$ is smooth and $\mathcal{P}$ coincide almost surely with a smooth function.
\end{Prop}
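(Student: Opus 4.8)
The plan is to obtain both identities as the first-order optimality condition of $\Lambda''$ under the variations that rigidly translate the whole configuration along the diagonal direction $\mathbf{1}:=(1,\dots,1)\in\R^N$, and then to bootstrap the regularity of $\mathcal{M}$ from the resulting linear ODE. The mechanism is that the diagonal direction is a symmetry of the interaction term $h(\pi(\cdot))$ appearing in $\Lambda''$ but \emph{not} of the harmonic well $|Z-A|^2$, so that instead of exact conservation of the momentum one obtains the forced equation $\ddot{\mathcal M}=\mathcal M-a$.

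First I would fix $\varphi\in C^\infty_c((0,T))$ and, for $\eps\in\R$, consider the competitor $\ZZ^\eps:=(Z_t+\eps\varphi(t)\mathbf{1})_{t\in[0,T]}$. Since adding the same scalar $\eps\varphi(t)$ to every coordinate is an isometry of $\R^N$ preserving both the order cone $\{x_1\le\dots\le x_N\}$ and the coincidence relation $z_i=z_j$, the curve $\ZZ^\eps$ is again an ordered $H^1$ curve with $\pi(Z^\eps_t)=\pi(Z_t)$ for all $t$, hence $h(\pi(Z^\eps_t))=h(\pi(Z_t))$; and because $\varphi$ vanishes near $0$ and $T$ one has $Z^\eps_0=P$, $Z^\eps_T=Q$, so $\ZZ^\eps$ is admissible for $\Lambda''$ (recall $\ZZ\in H^1$ because $\Lambda''(\ZZ)<\infty$). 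The key point is that the internal-energy term $\int_0^T h(\pi(Z_t))\,\D t$ is left untouched by this family of variations, so $\eps\mapsto\Lambda''(\ZZ^\eps)$ is simply a quadratic polynomial in $\eps$.

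Next, minimality of $\ZZ$ forces the coefficient of $\eps$ to vanish, i.e. $\int_0^T\{\dot\varphi(t)\,\dot Z_t\cdot\mathbf{1}+\varphi(t)\,(Z_t-A)\cdot\mathbf{1}\}\,\D t=0$. Dividing by $N$ and using $\dot Z_t\cdot\mathbf{1}=N\mathcal P(t)$, $Z_t\cdot\mathbf{1}=N\mathcal M(t)$ and $A\cdot\mathbf{1}=Na$, this becomes $\int_0^T\{\dot\varphi\,\mathcal P+\varphi\,(\mathcal M-a)\}\,\D t=0$ for every test function $\varphi$, which is precisely the statement that $\dot{\mathcal P}=\mathcal M-a$ in the sense of distributions on $(0,T)$. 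The companion identity $\dot{\mathcal M}=\mathcal P$ is immediate, since $\mathcal M$ is a linear combination of the coordinates of the $H^1$ curve $\ZZ$, hence itself $H^1$, and $\mathcal P=\dot{\mathcal M}$ almost everywhere by definition.

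Finally, combining the two relations gives $\ddot{\mathcal M}=\mathcal M-a$ distributionally, a constant-coefficient linear ODE; a one-line bootstrap (or simply the explicit general solution $\mathcal M(t)=a+c_1e^{t}+c_2e^{-t}$ obtained by solving it) then yields $\mathcal M\in C^\infty([0,T])$ and shows that $\mathcal P$ coincides almost everywhere with the smooth function $\dot{\mathcal M}$. I do not expect a real obstacle: the only point requiring any care is checking that the competitors $\ZZ^\eps$ stay in the admissible class — ordered, $H^1$, with the prescribed endpoints, and with $h(\pi(\cdot))$ unchanged — and this is exactly the elementary observation that diagonal translations preserve the order and the partition $\pi$. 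Thus the whole argument reduces to the standard first-variation computation, the only novelty being the bookkeeping that isolates the diagonal mode of the configuration.
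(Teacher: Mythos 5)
Your proof is correct and follows exactly the route the paper indicates: the first variation of $\Lambda''$ along the diagonal perturbation $Z_t+\eps\varphi(t)(1,\dots,1)$, using that $\pi$ (hence $h\circ\pi$) and the ordering are invariant under such translations, so only the quadratic kinetic and potential terms contribute. The paper's own proof is a two-line sketch of precisely this computation, so your write-up simply supplies the details it omits.
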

\begin{proof}
Here the proof consists in comparing the value of $\Lambda''$ on $\ZZ$ and $t \mapsto Z_t + \eps \varphi(t) V$ for small $\eps$, smooth and compactly supported $\varphi$, and where we call $V = (1, \dots, 1)$. The only somehow unusual thing to remark is that $\pi$ and hence $h \circ \pi$ are invariant under translations in the direction of $V$.
\end{proof}

\subsection{Shock, isolated shocks and minimal deviation}
\label{subsec:minimal_deviation}
This subsection contains the main estimate that allows to prove Theorem \ref{thm:traj_reg}. Roughly speaking, if at time $t$ some of the particles stick or separate, there is a below bound on the change of the velocity of at least one particle. The proof of Theorem \ref{thm:traj_reg} will then consist in showing that this cannot happen an infinite number of time.

Let us first define as "shocks" these sticking and separating behaviours:
\begin{Def}[Shocks]
\label{def:shock}
Let $\XX = X_t = (x_1(t), \dots , x_N(t))$ be a continuous trajectory on $\R^N$.
\begin{enumerate}
\item We call a shock of $\XX$ a triplet $(t,q,C)$ with $t \in [0,T]$, $q \in \R$ and $C \subset\llbracket 1, N \rrbracket$ such that:
\begin{itemize}
\item $C \in \pi(X_t)$,
\item for all $i \in C$, $x_i(t) = q$,
\item for all $\tau>0$, there exists $s \in (t-\tau, t+\tau)$ such that $C\notin \pi(X_s)$.
\end{itemize}

\item If $(t,q,C)$ is a shock of $\XX$, we say that it is isolated if $(t,q)$ is isolated in:
\begin{equation*}
\big\{ (t',q') \big| \, \exists C' \subset \llbracket1, N\rrbracket \mbox{ s.t. } (t',q',C') \mbox{ is a shock} \big\},
\end{equation*}
\emph{i.e.} if there is no other shock than $(t,q,C)$ in the neighbourhood of $(t,q)\in [0,T]\times \R$.
\end{enumerate}
\end{Def}
We provide in Figure \ref{fig:nonisolated_shock} a picture of a shock which does not seem to be isolated.
\begin{figure}
\centering
\begin{tikzpicture}
\draw[thick, ->] (0,-0.3) -- (0,2.8);
\draw[thick, densely dashed, blue] (2,0) to[bend right=10] (1.835,0.4);
\draw[thick, densely dashed, blue] (1.835,0.4) to[bend left=10] (1.735,0.6);
\draw[thick, densely dashed, blue] (1.735,0.6) to[bend left] (1.285,1.4);
\draw[thick, densely dashed, blue] (1.285,1.4) to[bend left=10] (0.985,1.8);
\draw[thick, densely dashed, blue] (0.985,1.8) to[bend left] (0.5,2.5);
\draw[very thick, dotted, ForestGreen] (2,0) to[bend left=10] (2.065,0.25);
\draw[very thick, dotted, ForestGreen] (2.065,0.25) to[bend right=10] (2.095,0.35);
\draw[very thick, dotted, ForestGreen] (2.095,0.35) to[bend right] (2.215,0.9);
\draw[very thick, dotted, ForestGreen] (2.215,0.9) to[bend right=10] (2.315,1.15);
\draw[very thick, dotted, ForestGreen] (2.315,1.15) to[bend right] (3,2.5);
\draw[thick, red] (2.035,0.25) to[bend right=10] (2.065,0.35);
\draw[thick, red] (2.065,0.35) -- (1.865,0.4);
\draw[thick, red] (1.865,0.4) to[bend left=10] (1.765,0.6);
\draw[thick, red] (1.765,0.6) -- (2.185,0.9);
\draw[thick, red] (2.185,0.9) to[bend right=10] (2.285,1.15);
\draw[thick, red] (2.285,1.15) -- (1.315,1.4);
\draw[thick, red] (1.315,1.4) to[bend left=10] (1.015,1.8);
\draw[thick, red] (1.015,1.8) to[bend right=15] (1.8,2.5);
\draw[thick, red] (1.955,0.22) -- (2.035,0.25);
\draw[thick, red] (1.955,0.22) -- (1.98,0.17);
\draw[thick, red] (1.98,0.17) -- (2.01, 0.15);
\draw[thick, red] (2.01,0.15) to[bend right] (2,0);
\draw plot[mark = +] (2,0) node[below]{$p_1 = p_2 = p_3$};
\draw plot[mark = +] (0.5,2.5) node[above]{$q_1$};
\draw plot[mark = +] (1.8,2.5) node[above]{$q_2$};
\draw plot[mark = +] (3,2.5) node[above]{$q_3$};
\draw (2pt,0) -- (-2pt,0) node[above left]{$0$};
\draw (2pt,2.5) -- (-2pt, 2.5) node[above left]{$T$};
\end{tikzpicture}
\caption{\label{fig:nonisolated_shock} A shock with three particles which does not seem to be isolated. We will see later on that this kind of shock cannot occur in our model.}
\end{figure}
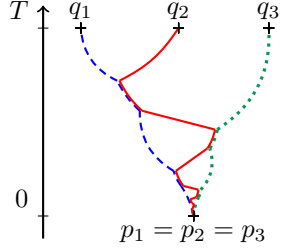
The following result is the main step in the proof of Theorem \ref{thm:traj_reg}.
\begin{Prop}
\label{prop:reg_isolated_shock}
Let $\ZZ = (z_1(t), \dots, z_N(t))$ be an ordered minimizer of $\Lambda'$ (or equivalently a minimizer of $\Lambda''$ in the class of ordered trajectories), and let $t \in [0,T]$.
\begin{enumerate}
\item If particle $i$ is not involved in a shock at time $t$, then for $s$ in the neighbourhood of $t$, $C := C(Z_s,i)$ is constant and $z_i$ is a smooth solution of:
\begin{equation}
\label{eq:Newton_law}
\ddot z_i(s) = z_i(s) - \frac{1}{\#C} \sum_{j \in C} a_j.
\end{equation}
In particular, if $i$ is involved in an isolated shock at time $t$, then $z_i$ admits left and right derivatives at time $t$, denoted by $\dot z_i(t-)$ and $\dot z_i(t+)$ respectively.
\item There is $\alpha = \alpha(N,A)>0$ such that for any isolated shock $(t,q,C)$, calling $i := \min C$:
\begin{equation}
\label{eq:minimal_deviation}
\dot z_i(t-) - \dot z_i(t+) \geq \alpha.
\end{equation}
\end{enumerate}
\end{Prop}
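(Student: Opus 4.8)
The plan is to treat the two assertions in turn; the first is a localized Euler--Lagrange computation, and the real work is the uniform lower bound in the second.

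\emph{Point 1.} I would first check that, away from shocks, the sub-block structure is locally frozen: the set of times at which particle $i$ is not involved in a shock is open, and on each connected component the class $C(Z_s,i)$ is constant. Indeed, the particles of index outside $C(Z_t,i)$ stay at positive distance from $z_i$ near $t$ by continuity, so if $C(Z_s,i)$ jumped along some $s_n\to t$ then $C(Z_t,i)$ would be a class of $\pi(Z_t)$ broken at times arbitrarily close to $t$, exhibiting $(t,z_i(t),C(Z_t,i))$ as a shock involving $i$. On such an interval all particles of the common class $C$ move rigidly, say along a single curve $w$; comparing $\Lambda''$ with the competitor obtained by translating the \emph{whole block} $C$ by $\varepsilon\varphi$, $\varphi\in C^\infty_c$ — a perturbation changing neither $\pi(Z_\cdot)$ nor $h(\pi(Z_\cdot))$ — gives the weak equation $\#C\int\dot w\,\dot\varphi=-\int(\#C\,w-\sum_{j\in C}a_j)\varphi$, i.e. \eqref{eq:Newton_law}, and a standard bootstrap makes $w$ smooth. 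Finally, if $(t,q,C)$ is an isolated shock, no particle of $C$ can be involved in a shock at any $s$ in a punctured neighbourhood of $t$, since by continuity such a shock would be located near $(t,q)$ in $[0,T]\times\R$; hence $z_i$ solves an equation of type \eqref{eq:Newton_law} smoothly on $(t-\tau,t]$ and on $[t,t+\tau]$, and $\dot z_i(t\mp)$ exist.

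\emph{Point 2, structure.} Perturbing \emph{all} particles of $C$ by $\varepsilon\varphi$ (near $t$ they are split into sub-blocks but move rigidly within each) again leaves $h(\pi(Z_\cdot))$ unchanged, and since $\sum_{j\in C}(b_{D(j)}-a_j)=0$ on each side — where $b_D:=\frac1{\#D}\sum_{j\in D}a_j$ and $D(j)$ is the sub-block of $j$ — the Euler--Lagrange identity forces conservation of the momentum of $C$: $\sum_{j\in C}\dot z_j(t-)=\sum_{j\in C}\dot z_j(t+)=:\#C\,\bar v$. Writing $\dot z_i(t-)-\dot z_i(t+)=(\dot z_i(t-)-\bar v)+(\bar v-\dot z_i(t+))$ for $i=\min C$, both terms are $\ge0$: $i$ lies in the lowest sub-block on each side, and the gap $g$ between two consecutive coalescing (resp. separating) sub-blocks solves $\ddot g=g-c$ with $c=b_{D'}-b_D>0$ (strictly, $A$ being strictly ordered), $g(t\mp)=0$ and $g\ge0$ on the appropriate side, which forces $\dot g(t-)<0$ (resp. $\dot g(t+)>0$); summing over the sub-blocks above $i$'s sub-block gives $\dot z_i(t-)\ge\bar v\ge\dot z_i(t+)$. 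At least one side is genuinely split — otherwise $C\in\pi(Z_s)$ for all $s$ near $t$, contradicting the last clause of Definition~\ref{def:shock} — and, using the time-reversal symmetry of $\Lambda''$ (up to exchanging $P$ and $Q$), we may assume it is the left one.

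\emph{Point 2, the estimate.} It remains to bound this nonnegative jump below by some $\alpha(N,A)>0$, combining two facts. First, evaluating the conserved energy of Proposition~\ref{prop:conservation_energy} on both sides of the shock yields $K_C^--h_C(\pi^-|_C)=K_C^+-h_C(\pi^+|_C)$, where $K_C^\pm$ is the kinetic energy of $C$ at $t\mp$ and $h_C$ is the part of $h$ carried by $C$; subtracting off $\bar v$ and $\bar a_C:=\frac1{\#C}\sum_{j\in C}a_j$ turns this into an identity $\mathrm{Var}^--\mathrm{Var}^+=\mathrm{VarB}^--\mathrm{VarB}^+$ between the velocity spread and the ($A$-determined) spread $\mathrm{VarB}^\pm=h_C(\pi^\pm|_C)-\#C\,\bar a_C^2$ of the sub-block means, and by the strict monotonicity in Proposition~\ref{prop:decomposition} together with finiteness of the set of ordered partitions one has $\mathrm{VarB}^-\ge\delta(N,A)>0$. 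Second, the explicit solution of $\ddot g=g-c$ with $g\ge0$ on $(t-\tau,t)$ and $g(t-)=0$ gives $-\dot g(t-)\ge c\tanh(\tau/2)$, while isolatedness of the shock prevents $\tau$ from being too small relative to the jump (at $t-\tau$ there would otherwise be a shock involving particles of $C$ at a position which, by continuity and the velocity bound coming from constancy of $\mathcal E$, is close to $(t,q)$ in the $(t',q')$-plane), and the constrained first-order condition obtained by pushing $z_{\min C}$ alone away from its sub-block (feasible precisely because $i=\min C$) controls the remaining degeneracy. Feeding these back into the variance identity produces the uniform $\alpha(N,A)$.

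\emph{Main obstacle.} The delicate point is exactly the \emph{uniformity} in $P,Q,T$: a priori the velocities near the shock are controlled only by the conserved energy $\mathcal E$, which depends on the data, so one must argue that along a minimizer a too-small velocity jump would generate a whole cascade of shocks accumulating near $(t,q)$ in the $(t',q')$-plane, contradicting isolatedness — converting the quantitative estimate into a purely qualitative incompatibility. A secondary, more bureaucratic difficulty is the careful bookkeeping of the ordered sub-block structure (the sets $C(Z_s,i)$ are intervals of integers, their cardinals jump only at shocks, etc.) that underpins both the sign analysis of the gaps and the momentum/energy computations.
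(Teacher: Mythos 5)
Your Point~1 is correct and is essentially the paper's argument (rigid perturbation of the whole block, Euler--Lagrange, bounded acceleration giving one-sided derivatives), and the skeleton of your Point~2 --- localized conservation of the momentum of $C$ across the shock and the sign analysis $\dot z_i(t-)\ge\bar v\ge\dot z_i(t+)$ for $i=\min C$ --- also matches. The gap is precisely the quantitative lower bound, which is the entire content of \eqref{eq:minimal_deviation}. The conservation laws you rely on are first-order consequences of smooth perturbations and are blind to the configuration the proposition must exclude: an ``elastic bounce'', where $C$ splits into the \emph{same} sub-blocks just before and just after $t$. There your variance identity degenerates ($\mathrm{VarB}^-=\mathrm{VarB}^+$, hence $\mathrm{Var}^-=\mathrm{Var}^+$) and is compatible with an arbitrarily small, even zero, jump of $\dot z_i$; so energy plus momentum alone cannot produce any $\alpha>0$. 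Your fallbacks do not close this: the gap-ODE bound $-\dot g(t-)\ge c\tanh(\tau/2)$ degenerates with the isolation time $\tau$, which depends on the trajectory and on $P,Q,T$ and must be allowed to shrink (the proposition is used in Theorem~\ref{thm:traj_reg} exactly to rule out shocks accumulating in time, so a bound that dies with $\tau$ is useless there); and ``pushing $z_{\min C}$ away from its sub-block'' perturbs in the wrong direction, since splitting \emph{increases} $h$ by at least $\delta$, so minimality only yields an upper bound on a kinetic gain. Your final paragraph essentially concedes that the uniform $\alpha(N,A)$ is not established.

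The missing idea is a second-order, genuinely variational competitor that exploits the discreteness of $h$ in the \emph{merging} direction. On the side of $t$ where $C$ is genuinely split (say after $t$; at least one side is, and $\Lambda''$ is time-reversible), let $C_1,C_2$ be the two leftmost sub-blocks, with sizes $k_1,k_2$ and velocities $v_1,v_2$, and set $p=(k_1v_1+k_2v_2)/(k_1+k_2)$. Modify $\ZZ$ only on $(t,t+\sigma)$ and only on the coordinates of $C_1\cup C_2$: keep these particles glued at position $q+(s-t)p$ for $s\in(t,t+\lambda\sigma)$, then interpolate affinely back to their positions at time $t+\sigma$. Gluing strictly coarsens the ordered partition on a set of times of measure $\lambda\sigma$, hence lowers the potential term by at least $\delta(N,A)\,\lambda\sigma$ by Proposition~\ref{prop:decomposition}, while an explicit expansion shows the kinetic excess is $\frac{k_1k_2}{k_1+k_2}(v_2-v_1)^2\frac{\lambda}{1-\lambda}\sigma+o(\sigma)$. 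Minimality of $\ZZ$, after dividing by $\lambda\sigma$ and letting first $\sigma\to0$ then $\lambda\to0$, gives
\begin{equation*}
\frac{k_1k_2}{k_1+k_2}\,(v_2-v_1)^2\;\ge\;\delta(N,A),
\end{equation*}
a bound independent of $P,Q,T$ and of the isolation scale. Feeding this into your momentum/ordering step yields \eqref{eq:minimal_deviation} with $\alpha=\sqrt{\delta/(N^2-N)}$.
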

\begin{proof}
\underline{Point 1}. If particle $i$ is not involved in a shock at time $t$, by definition of a shock, it means that $C := C(Z_t,i) \in \pi(Z_s)$ for all $s$ in a neighbourhood of $t$. In particular, for all $j \in C$ and $s$ sufficiently close to~$t$, by~\eqref{eq:coordinate_nablaf}:
\begin{equation*}
 \big(\overline{\nabla} f(Z_s)\big)_j = \frac{1}{\# C} \sum_{k \in C} a_k. 
\end{equation*}
On the other hand, it is easy to find a neighbourhood $U$ of $(t,z_i(t))$ in $[0,T] \times \R$ such that for all $j \in \{1, \dots, N \}$ and all $s\in[0,T]$, $(s, z_j(s)) \in U$ implies $j \in C$.  

As a consequence, if $\xi:[0,T] \to \R$  is smooth and compactly supported in a sufficiently small neighbourhood of $t$, and if $\eps$ is sufficiently small, by defining $\widetilde{\ZZ} = (\widetilde z_1(s), \dots, \widetilde z_N(s))$ for any $j\in \{1, \dots, N\}$ and $s \in [0,T]$ by:
\begin{equation*}
\widetilde z_j(s) := \left\{\begin{aligned}
&z_j(s) + \eps \xi(s) && \mbox{if } j \in C,\\
&z_j(s) && \mbox{else},\\
\end{aligned} \right.
\end{equation*}
then $\pi(\ZZ)$ and $\pi(\widetilde \ZZ)$ (and hence $\overline{\nabla} f(\ZZ)$ and $\overline{\nabla} f(\widetilde{\ZZ})$) coincide at all time. The ODE follows from comparing the values of $\Lambda'$ on $\ZZ$ and trajectories of type $\widetilde \ZZ$.

In particular, by boundedness of $\ZZ$, if particle $i$ is not involved in a shock at time $t$, $|\ddot z_i|$ is bounded by a constant not depending on $t$. The existence of $\dot z_i(t-)$ and $\dot z_i(t+)$ at the times of isolated shocks follows easily.

\noindent\underline{Point 2}. This is the heart of our study of the dynamical system, and maybe the less standard part of Section~\ref{sec:sticky}. But still the idea is very easy: With the notations of the statement, if $\dot z_i(t-) - \dot z_i(t+)$ is too small, then it is cheaper to stick particle $i$ with other particles, as shown in Figure \ref{fig:testtrajectory}. The proof goes as follows. 

\noindent\underline{Step 1}: Definition of a competitor.

Let us consider $(t,q,C)$ an isolated shock. Because it is isolated, we can find $\tau>0$ such that the particles of $C$ are not involved in an other shock between times $t-\tau$ and $t+\tau$. By definition of a shock, we cannot have $C \in \pi(Z_s)$ for all $s \in (t-\tau, t+\tau)$, so either for all $s \in (t-\tau,t)$, $C \notin \pi(Z_s)$ or for all $s \in (t,t+\tau)$, $C \notin \pi(Z_s)$. Without loss of generality, we suppose that the second one holds: the particles of $C$ are not all stuck right after the shock. Moreover, by our choice of $\tau$, for all $C' \subset C$, the assertion $C' \in \pi(Z_s)$ is either true of false independently on $s \in (t, t+\tau)$. Then, for $s \in (t, t+\tau)$, the following definitions of $C_1,C_2 \in \pi(Z_s)$ do not depend on $s$:
\begin{equation*}
C_1 := C(Z_s,i) \mbox{ for }i = \min C \qquad \mbox{and} \qquad C_2  := C( Z_s,i) \mbox{ for } i = \min C\backslash C_1.
\end{equation*}
(The classes $C_1$ and $C_2$ are the two leftmost packs of particles of $C$ right after the shock.) Let us define for $j = 1,2$:
\begin{equation}
\label{eq:def_kvp}
k_j := \# C_j, \qquad v_j := \dot z_i(t+) \mbox{ for }i \in C_j, \quad \mbox{and} \quad p := \frac{k_1 v_1 + k_2 v_2}{k_1 + k_2}.
\end{equation}
For $0\leq \sigma < \tau$ and $\lambda \in [0,1)$, we define a competitor $\ZZ^{\sigma,\lambda} = (z_1^{\sigma,\lambda}(s), \dots, z_N^{\sigma,\lambda}(s))$ by setting for all $i = \{1, \dots, N\}$ and $s \in [0,T]$:
\begin{equation*}
z_i^{\sigma,\lambda} (s) = \left\{
\begin{aligned}
&z_i(s) && \mbox{if }i \notin C_1\cup C_2 \mbox{ or } s \notin (t, t+\sigma),\\
& q + (s-t)p && \mbox{if } i \in C_1 \cup C_2 \mbox{ and } s \in (t, t+\lambda \sigma),\\
&\frac{t + \sigma - s}{(1-\lambda)\sigma}\big( q + \lambda\sigma p \big) + \frac{s - (t + \lambda \sigma)}{(1-\lambda) \sigma}z_i(t + \sigma) &&\mbox{if } i \in C_1 \cup C_2 \mbox{ and } s \in (t + \lambda \sigma, t + \sigma).
\end{aligned} 
\right.
\end{equation*}
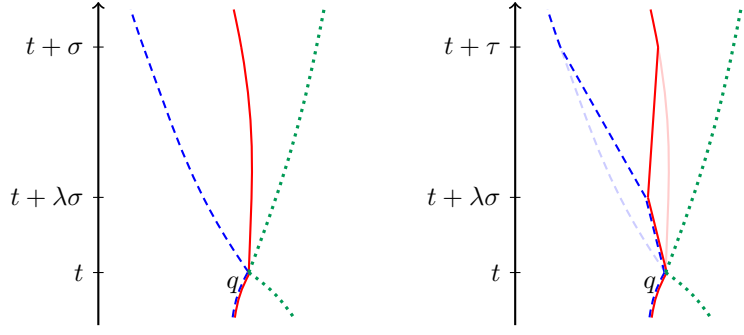
\begin{figure}
\centering
\begin{tikzpicture}
\draw[thick, ->] (0,-0.7) -- (0,3.6);
\draw (2pt,0) -- (-2pt,0) node[left]{$t$};
\draw[thick, densely dashed, blue] (2,0) to[out=125, in=-70] (0.6,3);
\draw[thick, densely dashed, blue] (0.6,3) to[out=110, in=-72] (0.43,3.5) ;
\draw[thick, densely dashed, blue] (1.985,0) to[bend right=10] (1.785,-0.6);
\draw[thick, red] (2,0) to[out=88, in=-80] (1.9,3);
\draw[thick, red] (1.9,3) to[out=100, in=-78] (1.8,3.5);
\draw[thick, red] (2.015,0) to[bend right=10] (1.815,-0.6);
\draw[very thick, dotted, ForestGreen] (2,0) to[bend right=5] (3,3.5);
\draw[very thick, dotted, ForestGreen] (2,0) to[bend left=10] (2.6,-0.6);
\draw (2,0) node[left=6pt, below=-2pt]{$q$};
\draw (2pt,1) -- (-2pt, 1) node[left]{$t + \lambda\sigma$};
\draw (2pt,3) -- (-2pt, 3) node[left]{$t + \sigma$};
\end{tikzpicture}
\hspace{1cm}
\begin{tikzpicture}
\draw[thick, ->] (0,-0.7) -- (0,3.6);
\draw (2pt,0) -- (-2pt,0) node[left]{$t$};
\draw[thick, densely dashed, blue!20] (2,0) to[out=125, in=-70] (0.6,3);
\draw[thick, densely dashed, blue] (0.6,3) to[out=110, in=-72] (0.43,3.5) ;
\draw[thick, densely dashed, blue] (1.985,0) -- (1.735,1);
\draw[thick, densely dashed, blue] (1.735,1) -- (0.6,3);;
\draw[thick, densely dashed, blue] (1.985,0) to[bend right=10] (1.785,-0.6);
\draw[thick, red!20] (2,0) to[out=88, in=-80] (1.9,3);
\draw[thick, red] (1.9,3) to[out=100, in=-78] (1.8,3.5);
\draw[thick, red] (2.015,0) -- (1.765,1);
\draw[thick, red] (1.765,1) -- (1.9,3);
\draw[thick, red] (2.015,0) to[bend right=10] (1.815,-0.6);
\draw[very thick, dotted, ForestGreen] (2,0) to[bend right=5] (3,3.5);
\draw[very thick, dotted, ForestGreen] (2,0) to[bend left=10] (2.6,-0.6);
\draw (2,0) node[left=6pt, below=-2pt]{$q$};
\draw (2pt,1) -- (-2pt, 1) node[left]{$t + \lambda\sigma$};
\draw (2pt,3) -- (-2pt, 3) node[left]{$t + \tau$};
\end{tikzpicture}
\caption{\label{fig:testtrajectory}To the left, a piece of the trajectory $\ZZ$, and to the right, the competitor $\ZZ^{\sigma,\lambda}$ that we describe in the proof.}
\end{figure}
(See Figure \ref{fig:testtrajectory} for an illustration of this competitor.) We will get a below bound on $v_2 - v_1$ by comparing the value of $\Lambda''$ on $\ZZ$ and $\ZZ^{\sigma, \lambda}$, and by differentiating the corresponding inequality first with respect to $\sigma$ at $\sigma=0$ (we zoom so that the particles of $\ZZ$ only travel along straight lines), and then with respect to $\lambda$ at $\lambda = 0$ (we compute the first variation of the action when we let the particles stick together).

\noindent \underline{Step 2}: A below bound on $v_2-v_1$.

The partitions $\pi(Z_s^{\sigma,\lambda})$ and $\pi(Z_s)$ coincide at all time except between time $t$ and $t + \lambda \sigma$, when $\pi(Z_s)$ is a strict refinement of $\pi(Z^{\sigma,\lambda}_s)$. Hence, calling:
\begin{equation*}
	\delta = \delta(N,A) := \min \big\{ h(\pi) - h(\pi') \, | \, (\pi, \pi') \mbox{ ordered partition of }\llbracket 1, N \rrbracket, \, \pi \mbox{ strict refinement of }\pi'\big\}>0,
\end{equation*}
we have for all $s \in (t,t + \lambda \sigma)$:
\begin{equation}
\label{eq:diff_h_delta}
h(\pi(Z_s^{\lambda,\sigma}))  + \delta \leq h(\pi(Z_s)).
\end{equation}
As $\ZZ^{\sigma}$ coincide with $\ZZ$ for times outside $(t, t + \sigma)$ and for coordinates that are not in $C_1 \cup C_2$, by definition~\eqref{eq:decomposed_functional_dim1} of $\Lambda''$, we have:
\begin{align}
\notag \Lambda''(\ZZ^{\sigma, \lambda}) - \Lambda''(\ZZ) &= \sum_{i \in C_1\cup C_2} \int_t^{t + \sigma}\Big\{ |\dot z_i^{ \sigma,\lambda}(s)|^2 + |z_i^{\sigma, \lambda}(s) - a_i|^2 -|\dot z_i(s)|^2 - |z_i(s) - a_i|^2\Big\}\D s \\
\notag &\hspace{2cm} + \int_t^{t + \lambda \sigma}\{ h(\pi(Z^{\sigma, \lambda}_s)) - h(\pi(Z_s)) \}\D s \\
\label{eq:estim_diff}&\leq \sum_{i \in C_1\cup C_2} \int_t^{t + \sigma}\Big\{ |\dot z_i^{\tau, \sigma}(s)|^2 -|\dot z_i(s)|^2 \Big\}\D s - \delta \lambda \sigma + \underset{\sigma \to 0}{o}(\sigma),
\end{align}
where to obtain the second line, we used \eqref{eq:diff_h_delta} and the fact that between times $t$ and $t + \sigma$, both $z_i$ and $z_i^{\sigma, \lambda}$ remain at a distance of order $\sigma$ of $q$.

Let us consider $i \in C_j$ for $j = 1,2$. One the one hand, as $z_i$ admits $v_j$ as a right derivative at time $t$, we have:
\begin{equation}
\label{eq:estim_dotzi}
\int_t^{t + \sigma}|\dot z_i(s)|^2 \D s = v_j^2 \sigma +  \underset{\sigma \to 0}{o}(\sigma). 
\end{equation}
On the other hand, we can compute explicitly:
\begin{align}
\notag \int_{t}^{t+ \sigma} |\dot z_i^{\sigma, \lambda}(s)|^2 \D s &= \lambda p^2 \sigma + (1-\lambda)\sigma \left( \frac{ z_i(t + \sigma) -  (q + \lambda p \sigma)}{(1-\lambda) \tau}\right)^2 \\
\notag &= \lambda p^2 \sigma + \frac{1}{(1-\lambda)\sigma} \left( q + v_j \sigma + \underset{\sigma \to 0}{o}(\sigma) - q - \lambda p \sigma \right)^2\\
\label{eq:esti_dotzsigma}&= \lambda p^2 \sigma +  \left( v_j  - \lambda p \right)^2 \frac{\sigma}{1-\lambda} + \underset{\sigma \to 0}{o}(\sigma).
\end{align}
By plugging \eqref{eq:estim_dotzi} and \eqref{eq:esti_dotzsigma} in \eqref{eq:estim_diff} and by using the definition \eqref{eq:def_kvp} of $k_1$, $k_2$ and $p$, we get:
\begin{align*}
\Lambda''(\ZZ^{\sigma, \lambda}) - \Lambda''(\ZZ) &\leq \Big\{ (k_1 + k_2)\lambda p^2 + \frac{k_1(v_1 - \lambda p)^2 + k_2(v_2 - \lambda p)^2}{1 - \lambda} - k_1 v_1^2 - k_2 v_2^2 - \delta \lambda \Big\}\sigma + \underset{\sigma \to 0}{o}(\sigma) \\
&=\Big\{  (k_1 + k_2) p^2 + k_1 v_1^2 + k_2 v_2^2 - 2 p\big( k_1 v_1 + k_2 v_2 \big) - \delta (1-\lambda)  \Big\}\frac{\lambda}{1-\lambda} \sigma + \underset{\sigma \to 0}{o}(\sigma)\\
&=\Big\{ k_1 v_1^2 + k_2 v_2^2 - \frac{(k_1v_1 + k_2 v_2)^2}{k_1 + k_2} - \delta (1-\lambda)  \Big\}\frac{\lambda}{1-\lambda} \sigma + \underset{\sigma \to 0}{o}(\sigma)\\
&=\Big\{\frac{k_1 k_2}{k_1 + k_2} (v_2 - v_1)^2 - \delta (1-\lambda)  \Big\}\frac{\lambda}{1-\lambda} \sigma + \underset{\sigma \to 0}{o}(\sigma).
\end{align*}
By minimality of $\Lambda''(\ZZ)$, this quantity must be nonnegative. If we divide it by $\lambda \sigma$, and if we let $\sigma$ and then $\lambda$ go to zero, we end-up with:
\begin{equation}
\label{eq:estim_v2-v1}
\frac{k_1 k_2}{k_1 + k_2} (v_2 - v_1)^2 \geq \delta.
\end{equation}

\noindent \underline{Step 3}: Conservation of momentum during an isolated shock and conclusion.

Because $(t,q,C)$ is isolated, it is easy to justify that we can replace $V$ by the vector $V^C$ whose $j$-th coordinate is $1$ if $j \in C$ and $0$ otherwise in the proof of Proposition~\ref{prop:conservation_impulsion}. Doing so, we obtain the "local" conservation of momentum:
\begin{equation*}
\frac{1}{\#C} \sum_{i \in C} \dot z_i(t-) = \frac{1}{\#C} \sum_{i \in C} \dot z_i(t+) =: \mathcal{P}^C(t).
\end{equation*}
by ordering of the particles, we have for $i = \min C$:
\begin{align*}
\dot z_i(t-) \geq \mathcal{P}(t) =\frac{1}{\#C} \sum_{i \in C} \dot z_i(t+) \geq \frac{k_1}{\#C} v_1 + \frac{\#C - k_1}{\#C} v_2.
\end{align*}
(Indeed, $j \in C \mapsto \dot z_j(t-)$ and $j \in C \mapsto \dot z_j(t+)$ are clearly non-increasing and non-decreasing respectively.) By recalling that $v_1 = \dot z_i(t+)$ and using \eqref{eq:estim_v2-v1}, we get:
\begin{equation*}
\dot z_i(t-) - \dot z_i(t+) \geq \frac{\#C - k_1}{\#C} (v_2 - v_1) \geq \frac{\#C - k_1}{\#C} \sqrt{\frac{k_1+k_2}{k_1k_2} \delta}.
\end{equation*}
The minimal right hand side's value is $\sqrt{\delta /(\#C^2 -\#C)}$, obtained for $k_1 = \#C - 1$ and $k_2=1$. Hence, we get the result by choosing $\alpha = \sqrt{\delta/(N^2 - N)}$.
\end{proof}
\subsection{Conclusion: proof of Theorem \ref{thm:traj_reg}}
\label{subsec:proof_regularity}
We are now ready to give the proof of Theorem \ref{thm:traj_reg}. We give ourselves $\ZZ$ a global minimizer of $\Lambda'$. Thanks to Proposition \ref{prop:ordered}, we can suppose that $\ZZ$ is ordered, and thanks to Proposition \ref{prop:decomposition}, we can consider $\Lambda''$ instead of $\Lambda'$. 

Because of Proposition \ref{prop:reg_isolated_shock}, it suffices to prove that there is a finite number of shocks. Indeed, in that case one can take for $0=t_0<t_1 \dots <t_p = T$ the moments of these shocks (and the endpoints of $[0,T]$). The smoothness of $\ZZ$ on each $[t_{i-1},t_i]$, $i=1, \dots, p$ follows directly from the Proposition \ref{prop:reg_isolated_shock}. Then $\pi(\ZZ)$ is constant on each $(t_{i-1}, t_i)$, $i=1, \dots, p$ because by Definition \ref{def:shock} of a shock, at each time of discontinuity of $\pi(\ZZ)$, there is at least one shock.

The set:
\begin{equation*}
\big\{ (t',q') \big| \, \exists C' \subset \llbracket1, N\rrbracket \mbox{ s.t. } (t',q',C') \mbox{ is a shock} \big\}
\end{equation*}
is easily seen to be compact. So if it is not finite, it admits at least one accumulation point. Otherwise stated, if there is an infinite number of shocks, then there is at least one shock which is not isolated. Let us consider such a shock $(t,q,C)$ with minimal number of particles involved, \emph{i.e.} with minimal $\#C$. The rest of the proof consists in showing that the existence of $(t,q,C)$ leads to a contradiction.

\noindent\underline{Step 1}: The velocities are bounded.

As $\ZZ$ is continuous on $[0,T]$, it is bounded. On the other hand, by definition, $h\leq |A|^2$. Now if $i \in \{1, \dots, N\}$ and $t \in [0,T]$ is such that $\ZZ$ is differentiable at $t$ (which is true for almost any $t$), recalling the definition \eqref{eq:def_energy} of $\mathcal{E}$:
\begin{equation*}
\dot z_i(t)^2 \leq |\dot Z_t|^2 \leq \mathcal{E} + |Z_t-A|^2 + h(\pi(Z_t)), 
\end{equation*}
which is bounded uniformly in $t$.

\noindent \underline{Step 2}: All the shocks in the neighbourhood of $(t,q)$ are isolated.

Let $U$ be a neighbourhood of $(t,q)$ in $[0,T]\times \R$ such that for all $s \in [0,T]$ and $i \in \{1, \dots, N\}$, $(s,z_i(s)) \in U$ implies $i \in C$. This is possible since $\ZZ$ is continuous and for all $j \notin C$, $z_i(t) \neq q$ by Definition~\ref{def:shock} of a shock. Let us consider $(t',q',C')$ a shock with $(t',q') \in U$. If $\#C' < \#C$, then $(t',q',C')$ is isolated by minimality of~$\#C$. If $\#C' = \#C$, then $C'=C$ by definition of $U$. But then it is easy to adapt the proof of Point 1. of Proposition~\ref{prop:full_degenerescence} to prove that $C \in \pi(Z_s)$, for all $s$ between $t$ and $t'$ and so there is no shock in $U$ between $t$ and $t'$. Hence there exists at most one such shock in $U$: either one before $t$ or one after $t$, but not both because else $(t,q,C)$ would contradict the third point of the definition of a shock. Up to reducing $U$, we can then exclude~$(t',q',C')$.

\noindent\underline{Step 3}: Conclusion using Proposition \ref{prop:reg_isolated_shock}.

As $(t,q,C)$ is not isolated, there is an infinite number of (isolated) shocks in $U$. Without loss of generality, we can assume that there is an infinite number of shocks in $U$ after time $t$. Call $i\in C$ the smallest index such that particle $i$ is involved in an infinite number of shocks in $U$ after time $t$. When $i \neq \min C$, up to reducing $U$ and by minimality of $i$, we can assume that no particle $j \in C$ with $j < i$ is involved in a shock in $U$ after time $t$.

As the shocks in $U$ involving $i$ after time $t$ are isolated (Step 2.), we can enumerate their times in decreasing order $(t_p)_{p \in \N}$. The boundedness of $\ZZ$ as well as~\eqref{eq:Newton_law} allows us to take $M$ an upper bound for $\ddot z_i$ between the times of shocks. For all $p \in \N$ and $s \in (t_{p+1}, t_p)$, taking $\alpha$ as in \eqref{eq:minimal_deviation}, we have:
\begin{align*}
\dot z_i(s) &= \dot z_i(t_0-) + \sum_{k=1}^p\left\{ \dot z_i(t_k-) - \dot z_i(t_k+) - \int_{t_{k-1}}^{t_k} \ddot z_i(\tau) \D \tau\right\} - \int_{t_p}^s \ddot z_i(\tau) \D \tau \\
&\geq \dot z_i(t_0-)+ p \alpha - M (t_0 - t),
\end{align*}
which contradicts Step 1. as soon as $p$ is sufficiently large. \qed

 \bibliography{bibliography}
 \bibliographystyle{plain}

 \end{document}